\documentclass[a4paper,11pt]{article}
\usepackage[T1]{fontenc}
\usepackage{lmodern}
\usepackage[margin=27mm,top=25mm,bottom=26mm]{geometry}
\usepackage{amsmath,amssymb,amsthm,mathtools}
\usepackage{booktabs,longtable,array,enumitem,microtype}
\usepackage[hidelinks]{hyperref}
\usepackage{xurl,fancyvrb,needspace}
\setlist{itemsep=2pt,topsep=3pt}
\allowdisplaybreaks[2]
\numberwithin{equation}{section}
\newtheorem{theorem}{Theorem}[section]
\newtheorem{lemma}[theorem]{Lemma}
\newtheorem{proposition}[theorem]{Proposition}
\newtheorem{corollary}[theorem]{Corollary}
\theoremstyle{definition}

\theoremstyle{remark}

\newcommand{\T}{\mathbb T}
\newcommand{\Z}{\mathbb Z}
\newcommand{\MA}{\mathfrak M}
\newcommand{\m}{\mathfrak m}
\newcommand{\Sg}{\mathfrak S}
\newcommand{\ind}{\mathbf 1}
\newcommand{\meas}{\operatorname{meas}}
\newcommand{\dd}{\,\mathrm d}

\DeclareMathOperator{\lcm}{lcm}
\hypersetup{pdftitle={Two Primes and Five Mixed Fibonacci--Lucas Summands},pdfauthor={Zhao Shen and Chenghe Xu}}
\title{Two Primes and Five Mixed Fibonacci--Lucas Summands}
\author{Zhao Shen\thanks{E-mail: \texttt{sz1021@csu.edu.cn}.}
\and Chenghe Xu\thanks{Corresponding author. E-mail: \texttt{262138051@csu.edu.cn}.}}
\date{\small School of Mathematics and Statistics, Central South University\\
Changsha, China}
\begin{document}
\maketitle
\begin{abstract}
Using the circle method, we prove that every sufficiently large positive integer
is a sum of two primes, $u$ Fibonacci numbers and $5-u$ Lucas numbers for each
$u\in\{1,2,3,4\}$. All recurrence indices are positive, and terms may be repeated.
\end{abstract}
\noindent\textbf{2020 Mathematics Subject Classification.} Primary 11P32; Secondary 11B39, 11L07.\par
\noindent\textbf{Keywords.} Additive representations, Fibonacci numbers, Lucas numbers, circle method, computer-assisted proof.

\section{Introduction}\label{sec:intro}
Let the Fibonacci and Lucas sequences be defined by
\[
 F_0=0,\quad F_1=1,\quad F_{n+2}=F_{n+1}+F_n,
 \qquad
 L_0=2,\quad L_1=1,\quad L_{n+2}=L_{n+1}+L_n
 \quad(n\ge0).
\]
We use only positive indices. Repeated primes and repeated recurrence terms are allowed. Throughout, $\log$ denotes the natural logarithm.

The addition of a few terms from an exponentially growing sequence is a
classical approximation to the binary Goldbach problem. Linnik~\cite{Linnik}
proved that a fixed number of powers of two suffices for all sufficiently large
even integers. Heath-Brown and Puchta~\cite{HBSP} obtained thirteen
powers of two and developed a large-values argument for the associated sparse
exponential sum. The work of Pintz and Ruzsa~\cite{PRI,PR} combines such estimates
with refined major-arc analysis; their second paper obtains eight powers of two.
These results concern powers of two, not Fibonacci or Lucas summands.

For the two recurrence sequences considered here, the closest direct antecedent
is the following result.
\begin{theorem}[Sun {\cite[Theorems 1.1--1.3]{Sun}}]\label{thm:sun}
Every sufficiently large positive integer can be written in each of the forms
\[
 \begin{aligned}
 p_1+p_2+\sum_{i=1}^{6}F_{r_i},\\
 p_1+p_2+\sum_{j=1}^{7}L_{s_j},\\
 p_1+p_2+\sum_{i=1}^{3}F_{r_i}+\sum_{j=1}^{3}L_{s_j},
 \end{aligned}
\]
where $p_1,p_2$ are primes and all recurrence indices are positive integers.
Repeated summands are allowed.
\end{theorem}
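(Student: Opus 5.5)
The three representations in Theorem~\ref{thm:sun} can all be treated by one circle-method scheme. I describe the mixed case; the pure Fibonacci and pure Lucas cases only change how many of each summand appear.

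\textbf{Setup.} Let $N$ be large. Set
\[
S(\alpha)=\sum_{p\le N}(\log p)\,e(p\alpha),\qquad
G(\alpha)=\sum_{F_r\le N}e(F_r\alpha),\qquad
H(\alpha)=\sum_{L_s\le N}e(L_s\alpha),
\]
where $r,s\ge1$. Then $G$ and $H$ each have $K\asymp\log N$ terms. The weighted number of representations of $N$ is
\[
R(N)=\int_0^1 S(\alpha)^2G(\alpha)^3H(\alpha)^3e(-N\alpha)\dd\alpha .
\]
I split $[0,1)$ into major arcs $\MA$ and minor arcs $\m$. The arcs $\MA$ are built around rationals $a/q$ with $q\le P=N^{\varepsilon}$ or a smaller power of $\log N$, and have width $\ll P/N$.

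\textbf{Major arcs.} On $\MA$ I use the Siegel--Walfisz approximation for $S$. The contribution should be $\gg N\sum_{\text{tuples}}\Sg(N-\sum F_{r_i}-\sum L_{s_j})$, with $\Sg$ the binary Goldbach singular series. This is essentially $\gg NK^{6}$ times the proportion of tuples for which $N-\sum F_{r_i}-\sum L_{s_j}$ is even and not too small. Fibonacci and Lucas numbers are periodic modulo $2$ with period $3$, and both parities occur, so a positive proportion of tuples give the right parity. Moreover $\Sg(m)\ge 2C_2>0$ for every even $m$. Hence the main term is $\ge c_0NK^{6}$, with $c_0$ an explicit constant. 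To make this rigorous, including the average of $\Sg$ over tuples and the fact that the $\log$ weights of $p$ stay under control, I use a Gallagher-type lower-bound argument in the style of Pintz--Ruzsa: truncate $\Sg$ at small moduli and compute the local densities modulo small $q$ from the periodicity of $F_n,L_n\bmod q$.

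\textbf{Minor arcs.} I split the integrand as
\[
\Bigl|\int_{\m}\Bigr|\le \Bigl(\sup_{\m\setminus\mathcal E_\lambda}|G|^{a}|H|^{b}\Bigr)\int_0^1|S|^2|G|^{3-a}|H|^{3-b}\dd\alpha+\int_{\mathcal E_\lambda}\cdots,
\]
where $\mathcal E_\lambda=\{\alpha:|G(\alpha)|>\lambda K\}$ (together with the analogous set for $H$) is the large-values set. Following Heath-Brown--Puchta:
\begin{itemize}
\item $\meas(\mathcal E_\lambda)\ll N^{-E(\lambda)}$, where $E(\lambda)$ is obtained from the transfer matrix of the recurrence $\alpha\mapsto\{F_n\alpha\}$ (the decay of $\prod\cos$-type quantities along Zeckendorf digits).
\item For $\alpha\in\m$ outside this set, $|S(\alpha)|$ is controlled by Vinogradov's bound.
\item The mean values $\int|S|^2|G|^{2k}$ are bounded by counting solutions of $p_1-p_2=\sum\pm F_{r_i}$. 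By Selberg's sieve this gives $\ll NK^{2k}\cdot C_k$, with explicit $C_k$.
\end{itemize}
Combining these, the minor-arc contribution is at most $(\lambda^{a+b}C+o(1))NK^{6}$. This is less than $c_0NK^6$ as soon as $\lambda$ is small enough and $E(\lambda)>1$.

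\textbf{Main obstacle.} The hard step is the numerical inequality: showing that a $\lambda$ exists with $E(\lambda)>1$ and $\lambda^{a+b}C<c_0$ for only six (respectively seven) sparse summands. This needs computer-assisted bounds:
\begin{itemize}
\item an explicit computation of the exponent $E(\lambda)$ for Fibonacci and for Lucas sums, via a $2\times2$ (or larger) operator norm on functions of $(\{F_{n}\alpha\},\{F_{n+1}\alpha\})$;
\item explicit Selberg-sieve constants $C_k$, computed as sums of the singular series over differences of Fibonacci and Lucas numbers.
\end{itemize}
I would first attempt the bound using H\"older to balance the $G$ and $H$ factors. If that falls short, I would refine the large-values set into several levels of $\lambda$.
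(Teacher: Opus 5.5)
The paper does not prove this statement; it quotes it from Sun. Its own proof of Theorem~\ref{thm:main} follows Sun's scheme, and measured against that argument your outline has a genuine gap in the treatment of the large-values set.

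You require $E(\lambda)>1$, which is impossible. If $|\alpha|\le c/N$ with $c=c(\lambda)$ small, then $|F_r\alpha|\le c$ for every term, so $|G(\alpha)|\ge K\cos(2\pi c)>\lambda K$. Hence $\meas\{|G|>\lambda K\}\gg_\lambda N^{-1}$ for every $\lambda<1$. Your two conditions also pull against each other, since $E(\lambda)$ decreases as $\lambda$ decreases. At thresholds small enough to beat the main term, the true exponent is only about $0.6$; Lemma~\ref{lem:large} certifies $11667/20000$.

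What makes $\int_{\m\cap\mathcal E_\lambda}$ negligible is a power-saving pointwise bound $\sup_{\m}|S|\le N^{1-\delta}$, used \emph{on} $\mathcal E_\lambda$ (not outside it, as you wrote), together with $E(\lambda)>1-2\delta$. The available minor-arc bounds for $S$ (Vaughan's $N/\sqrt P+N^{4/5}$, or Lemma~\ref{lem:MPR}) save only about $\sqrt P$. So you need $\delta$ near $1/5$, and hence major arcs with $P$ a fixed power of $N$, roughly $N^{2/5}$ to $N^{4/9}$. The paper takes $P_+\approx N^{0.44}$ and needs an exponent above $7/12$. With $P=N^{\varepsilon}$ or $(\log N)^B$, as you propose, the exceptional set cannot be controlled.

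Once $P$ is that large, Siegel--Walfisz plus a Gallagher-type truncation of $\Sg$ no longer evaluates $\int_{\MA}S^2e(-m\alpha)\dd\alpha$. The missing ingredient is the explicit formula of Pintz and Pintz--Ruzsa. It gives the integral as $\Sg(m)m$ plus a bounded number of terms from zeros of Dirichlet $L$-functions near $s=1$, including a possible Siegel zero. These terms are non-negligible only when a fixed multiple of $m$ is divisible by one of a bounded family of conductors tending to infinity. You then need an exceptional-modulus lemma (Sun's Lemma~5.1, extended here as Lemma~\ref{lem:exc}). It shows that, after summing over one recurrence index, the tuples with $N-R$ in such a class carry negligible singular-series weight. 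Only then is your main term $\gg NK^6$ times the average of $\Sg(N-R)$ justified.

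Two quantitative refinements in the paper also matter with so few summands. First, it bounds $|SU|^2$ only over $\m$, subtracting a major-arc lower bound (Lemma~\ref{me:diagonal} and the Pintz--Ruzsa shift formula). This lowers the odd-Fibonacci energy constant from about $8.2$ to $5.7$. Second, it splits the recurrence sums by index modulo $3$, so that every weighted tuple has the parity of $N$.
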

Our result has five recurrence summands and covers every composition in which
both sequences occur.

\begin{theorem}\label{thm:main}
There exists an integer $N_0>0$ such that, for every integer $N\ge N_0$ and every $u\in\{1,2,3,4\}$, there is a representation
\begin{equation}\label{eq:repr}
 N=p_1+p_2+\sum_{i=1}^{u}F_{r_i}+\sum_{j=1}^{5-u}L_{s_j},
\end{equation}
where $p_1,p_2$ are primes and all $r_i,s_j$ are positive integers.
\end{theorem}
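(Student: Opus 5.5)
We follow the Linnik--Heath-Brown--Puchta--Pintz--Ruzsa circle method, adapted to Fibonacci and Lucas numbers as in Sun's argument (Theorem~\ref{thm:sun}). Fix $u$ and write $K=\lfloor \log N/\log\phi\rfloor+O(1)$, where $\phi=(1+\sqrt5)/2$. We introduce
\[
 S(\alpha)=\sum_{p\le N}(\log p)e(p\alpha),\qquad
 G(\alpha)=\sum_{1\le r\le K}e(F_r\alpha),\qquad
 H(\alpha)=\sum_{1\le s\le K}e(L_s\alpha),
\]
and the weighted count
\[
 R_u(N)=\int_{\T}S(\alpha)^2G(\alpha)^uH(\alpha)^{5-u}e(-N\alpha)\dd\alpha .
\]
We split $\T$ into major arcs $\MA$ (those $\alpha$ near $a/q$ with $q\le P$, for a small power $P$ of $\log N$, or a fixed power of $N$ with Pintz--Ruzsa style modifications) and minor arcs $\m$. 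It suffices to show $R_u(N)\gg N(\log N)^{5}$.

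\textbf{Major arcs.} On $\MA$ we replace $S$ by its Siegel--Walfisz/Gallagher approximation. This gives
\[
 \int_{\MA}\approx \sum_{r_i,s_j} J(N-n)\,\Sg(N-n),
\]
where $n=\sum F_{r_i}+\sum L_{s_j}$, $J(m)\asymp m$ is the singular integral, and $\Sg(m)$ is the binary Goldbach singular series. We bound $\Sg(m)$ below by $\prod_{p\mid m,\,p>2}\frac{p-1}{p-2}$ when $m$ is even, and by $0$ when $m$ is odd. To avoid an exceptional zero we follow Heath-Brown--Puchta: we drop a possible exceptional modulus and accept a loss of a constant factor $c_0<1$. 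The main task here is to show that a positive proportion, weighted appropriately, of the $(r,s)$ tuples with $n\le N/2$ make $N-n$ even. Both $F_r$ and $L_s$ are periodic mod $2$ with period $3$, and odd terms occur, so some index classes mod $3$ force the right parity for every $u$. This yields a main term $\ge c_1 N K^5$ with explicit $c_1$, computed via the density of the residues of $F_r, L_s$ modulo small primes, i.e.\ Pisano periods.

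\textbf{Minor arcs.} We bound
\[
 \int_{\m}|S|^2|G|^u|H|^{5-u}\le \sup_{\m}\bigl(|G|^{a}|H|^{b}\bigr)\int_{\T}|S|^2|G|^{u-a}|H|^{5-u-b}\dd\alpha .
\]
The second factor we treat by Parseval-type counting: the integral counts solutions of $p_1-p_2=\sum\pm F\pm L$, and a sieve upper bound gives $\le C_2 N K^{5-a-b}$ times a singular series average. The first factor needs a large-values estimate. Let
\[
 E(\lambda)=\{\alpha\in\m: |G(\alpha)|>\lambda K\}.
\]
We show $\meas E(\lambda)\ll N^{-E(\lambda)'}$ for an explicit exponent. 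As in Heath-Brown--Puchta, this comes from computing $\prod_{r}|\cos(\pi F_r\alpha)|$-type averages over blocks of consecutive indices. It reduces to the spectral radius of a finite transfer operator, since $(F_r\alpha,F_{r+1}\alpha)\bmod 1$ evolves under the Fibonacci matrix, which is a computer-assisted numerical bound. The same holds for $H$ and for the mixed products.

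\textbf{Conclusion and main obstacle.} Finally we compare the constants. We need
\[
 C_2\cdot(\text{large-value saving})<c_1
\]
simultaneously for $u=1,\dots,4$. This is where five summands, rather than Sun's six or seven, must be won. I expect the hardest step to be the sharp explicit large-values exponent for the mixed product $|G|^u|H|^{5-u}$, together with a tight upper constant $C_2$ from the Selberg sieve with Pintz--Ruzsa refinements. I would first attempt it by rigorous interval-arithmetic computation of the transfer-operator norm on a fine grid of $(F_r\alpha,F_{r+1}\alpha)$, using the identity $L_s=F_{s-1}+F_{s+1}$ so that $G$ and $H$ are governed by the same dynamics and their joint distribution can be controlled together. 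The case $u=1$ or $u=4$ is likely tightest, because there the weaker of the two individual saving exponents dominates.
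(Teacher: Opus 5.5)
Your outline has the paper's overall shape, but it never says how the large-value set $E$ on the minor arcs is paid for, and that is where the theorem is decided. On $\m\cap E$ you need $\sup_{\m}|S|^2\cdot\meas E=o(N)$, or a moment substitute. This forces $P$ to be a power of $N$, since major arcs up to $(\log N)^B$ leave no power saving for $S$. It also ties the large-deviation exponent $\delta$ to the best available prime-sum bound.

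For $u=1,2,3$ the paper's certified exponent is $11667/20000$, only $1/60000$ above $7/12$. So it closes only with Maynard--Pandey--Radziwi\l\l's $\sup_\m|S|\le N^{19/24+o(1)}$ (Vaughan's $N^{4/5}$ would require $\delta>3/5$), together with Pintz's explicit formula on major arcs of size $N^{0.44}$.

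For $u=4$ the admissible monomials $ab^3d$ and $b^4d$, at the threshold $\lambda_4=0.3499$ forced by the margin, are certified only with $\delta\approx0.5622$ and $0.5637$, below $7/12$. No pointwise bound then suffices. Interpolating the pointwise bound with Parseval gives only $\int_\m|S|^4\le N^{31/12+o(1)}$, which reproduces the same $7/12$ barrier. The paper needs a new prime-side input here:
\begin{itemize}
\item the fourth moment $\int_{\m_{\rm hi}}|S_w|^4\le N^{2.5401+o(1)}$ of Proposition~\ref{prop:high4}, built from the Heath-Brown identity, the Maynard--Pandey--Radziwi\l\l\ factor decomposition and the Chen--Gupta--Li character-energy bound;
\item Kumchev's bound $N^{0.7805}$ on the range $B_N\le N^{0.46}$;
\item a smoothing of the prime sum, at cost $O(\sqrt\tau N)$, so that these estimates apply.
\end{itemize}
You place the main difficulty in the recurrence large-value computation, but for $u=4$ your plan has nothing that supplies this estimate.

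Several steps you do specify would also lose constants the argument cannot afford. The final margins are about $2.5\times10^{-5}$ for $u=1$ and $3.9\times10^{-4}$ for $u=4$, against a main term near $1.994$.
\begin{itemize}
\item A constant-factor loss $c_0<1$ at an exceptional zero is fatal. Instead one shows, as in Lemma~\ref{lem:exc}, that tuples for which a growing exceptional conductor divides a fixed multiple of $N-R$ carry singular-series weight $o(J^5)$.
\item Bounding $\Sg(m)$ below by $\prod_{p\mid m,\,p>2}(p-1)/(p-2)$ discards the factor $2C_2>1.32$.
\item With the unrestricted sums $G,H$, about half of all tuples have $N-n$ odd and contribute nothing. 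The paper splits each sequence into its even and odd index classes ($A,B,C,D$) and chooses the positive weights $W_{u,\nu}$ by the parity of $N$, so every tuple has $N-R$ even.
\item $\sup_\m(|G|^a|H|^b)$ has no saving. The supremum must be taken over $\m\setminus E$, and the $L^2$ factor must be integrated over $\m$, not $\T$.
\end{itemize}
On the last point: even for the odd-Fibonacci sum $b$, and even with the Johnston--Trudgian constant $6.7814$ (a Selberg-sieve $8$ is worse), $\int_\T|Sb|^2$ is about $(0.72+3.391\cdot2.1985)N\approx8.2N$. Subtracting the major-arc part, using Pintz--Ruzsa's shifted correlations and the diagonal lower bound of Lemma~\ref{me:diagonal}, gives $E_B=5.697$ instead. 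With $8.2$ the threshold $\lambda$ would have to drop to about $0.24$, while the certified exponent at $\lambda=0.34192$ already exceeds $7/12$ by only $1/60000$.
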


\begin{corollary}\label{cor:count}
There exists a constant $c>0$ such that every sufficiently large integer $N$ has at least 
$cN(\log N)^3$ ordered representations of the form~\eqref{eq:repr}, for each $u\in\{1,2,3,4\}$. 
The same constant $c$ can be used for all four compositions.
\end{corollary}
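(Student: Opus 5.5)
The plan is to read off Corollary~\ref{cor:count} from the quantitative form of the circle-method argument behind Theorem~\ref{thm:main}, not from its qualitative statement. Fix $u\in\{1,2,3,4\}$ and put $K=\lfloor \log N/\log\phi\rfloor$, where $\phi=(1+\sqrt5)/2$. Every Fibonacci or Lucas number up to $N$ has index at most about $K$, so there are $\asymp K\asymp\log N$ admissible indices for each of the five recurrence summands. We weight the primes with $\log p$ and define
\[
 R_u(N)=\sum_{\substack{p_1+p_2+F_{r_1}+\cdots+F_{r_u}+L_{s_1}+\cdots+L_{s_{5-u}}=N\\ 1\le r_i,s_j\le K}}(\log p_1)(\log p_2)
 =\int_0^1 S(\alpha)^2\,G(\alpha)^u H(\alpha)^{5-u}e(-N\alpha)\dd\alpha .
\]
Here $S(\alpha)=\sum_{p\le N}(\log p)e(p\alpha)$, $G(\alpha)=\sum_{r\le K}e(F_r\alpha)$ and $H(\alpha)=\sum_{s\le K}e(L_s\alpha)$. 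Since $\log p_1\log p_2\le(\log N)^2$, the number of ordered representations is at least $R_u(N)/(\log N)^2$. It therefore suffices to show $R_u(N)\ge c'N(\log N)^5$ for all large $N$, with $c'$ independent of $u$.

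The key steps follow the proof of the theorem. First, split $[0,1)$ into major arcs $\MA$ and minor arcs $\m$.

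On $\MA$, replace $S$ by its standard approximation. This gives
\[
 \int_{\MA}\cdots=\sum_{\substack{r_i,s_j\le K}}\Sg(N-M)\,(N-M)+O\bigl(N(\log N)^{5}/\log\log N\bigr)
\]
or a similar error term, where $M=\sum F_{r_i}+\sum L_{s_j}$ and $\Sg$ is the binary Goldbach singular series. The terms with $N-M<N/2$ come from only $O(K^4)$ index tuples, and the same is true of the terms with $M$ of the wrong parity, so both sets can be discarded. We then need a positive proportion of the $\asymp K^5$ tuples to satisfy $N-M\ge N/2$, $N-M$ even, and $\Sg(N-M)\ge \tfrac12\cdot 2\prod_{p>2}(1-(p-1)^{-2})$.

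The parity condition is handled by periodicity mod 3. Fibonacci and Lucas numbers are even exactly when the index is divisible by $3$, so both parity classes of $M$ occur for a positive proportion of tuples when $u\ge1$ and $5-u\ge1$. Since $\Sg(n)\ge\Sg_0>0$ for every even $n$, the major-arc contribution is $\ge c_1N(\log N)^5$. Here $c_1$ depends only on the positive densities of the parity classes, and there are only four values of $u$, so taking the minimum over $u$ gives a uniform constant.

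On $\m$, use $\int|S|^2\ll N\log N$ together with the sup bounds for $G$ and $H$ and the large-values and mean-value estimates for $G,H$ that the paper establishes for the proof of Theorem~\ref{thm:main}. This should give $\int_{\m}|S^2G^uH^{5-u}|\le \eta\, N(\log N)^5$ with $\eta<c_1/2$.

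The main obstacle is this minor-arc bound with an explicit $\eta$ smaller than the major-arc constant. This is exactly the delicate numerical comparison, likely computer-assisted, of the paper's main proof. If that proof is already phrased as $R_u(N)\ge cN(\log N)^5$, the corollary follows immediately after dividing by $(\log N)^2$. The point to check is that the final constant is stated uniformly, which is automatic because $u$ ranges over a finite set and we can take the minimum of the four constants.
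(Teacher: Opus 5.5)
Your reduction from the number of ordered representations to the $\log p_1\log p_2$-weighted count $R_u(N)$ is sound. The gap is in how you propose to bound $R_u(N)$ from below.

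You rerun a circle method with the unrestricted sums $G,H$. Everything then rests on $\int_{\m}|S^2G^uH^{5-u}|\le\eta N(\log N)^5$ with $\eta<c_1/2$, which you only say ``should'' hold. The paper proves no mean-value or large-values estimate for $G$ or $H$. The energy and large-deviation bounds in Lemmas~\ref{lem:prime}, \ref{lem:large} and~\ref{lem:fourdat} concern only the parity-restricted normalized sums $b,d$ and the specific weights $W_{u,\nu}$. Even there, the main-minus-minor margin can be as small as $2.5\cdot10^{-5}N$.

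Your major-arc input is also too weak. In the paper's setting every recurrence sum $R$ already has the parity of $N$. Even so, the pointwise bound $\Sg(n)\ge2C_2\approx1.32$ would give a major-arc term of only about $1.32N$. The minor-arc losses $\lambda E_uN$ and $0.3499E_BN$ all exceed $1.94N$. With unrestricted sums you would in addition lose roughly half of the tuples to the wrong parity. The paper gets its margin from the averaged singular series computed by the local convolutions of Section~\ref{sec:local}.

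The major arcs are a further obstacle. The paper needs them to have size a fixed power of $N$, about $N^{0.44}$, so that $\sup_{\m}|S|\le N^{19/24+o(1)}$ beats the measure of the large-value set. On arcs that large, zeros near $\operatorname{Re}s=1$ affect $\int_{\MA}S^2e(-m\alpha)\dd\alpha$ non-uniformly in $m$. The paper controls these through Pintz's explicit formula and Lemma~\ref{lem:exc}, not through a standard approximation with uniform error.

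The missing observation is that no new analysis is needed. The corollary just unwinds the normalization in the bounds $\mathcal R_u(N)\ge c_uN$ and $\mathcal R_{4,w}(N)\ge c_4N$ already obtained in the proof of Theorem~\ref{thm:main}:
\begin{enumerate}
\item Each $W_{u,\nu}$ is a fixed nonnegative combination, with coefficients summing to one, of monomials $a^{k_A}b^{k_B}c^{k_C}d^{k_D}$. Such a monomial carries the factor $M^{-(k_A+k_C)}(2M)^{-(k_B+k_D)}\le M^{-5}\asymp J^{-5}$.
\item Hence every ordered recurrence-index tuple receives total weight at most $CJ^{-5}$.
\item The prime weights are at most $(\log N)^2$, and $0\le w\le1$.
\item So the number of ordered representations is at least $c_uNJ^5/(C(\log N)^2)\gg N(\log N)^3$, since $J\asymp\log N$.
\item Taking the minimum of the finitely many constants makes $c$ uniform in $u$.
\end{enumerate}
In your own notation, the same positivity gives $R_u(N)\gg N(\log N)^5$ at once. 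Since $J+1\le K$, each unnormalized restricted monomial count is a sub-sum of $R_u(N)$ with no larger prime weights. Your division by $(\log N)^2$ then completes the proof.
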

An ordered representation records the two prime positions, the $u$ Fibonacci-index positions, and the $5-u$ Lucas-index positions. Distinct indices remain distinct choices when the corresponding recurrence values agree.

The proof separates the recurrence sums according to parity. For $u=1,2,3$,
we start with a positive combination of products containing one Fibonacci
factor and two Lucas factors. Two further factors give the required five
summands. For $u=4$, we use one even Fibonacci term, three odd Fibonacci
terms and one odd Lucas term when $N$ is even. When $N$ is odd, all five
recurrence terms are odd. The remaining target for the two primes is even
in every case.

In the circle method, we integrate products of exponential sums to count
representations. We separate neighborhoods of rational numbers with small
denominators, called the major arcs, from their complement, the minor arcs.
The major-arc formulas used here are due to Pintz~\cite{Pintz} and
Pintz--Ruzsa~\cite{PR}. The uniform bound for pairs of primes with prescribed
difference is recorded by Johnston--Trudgian~\cite[Proposition 2]{JT}, using
Lichtman's work~\cite{Lichtman}. We also use Sun's estimates for recurrence
values that agree modulo an integer, including his mean bound $2.1985$ for
odd Fibonacci differences~\cite[Proposition 4.1]{Sun}. That bound is cited
rather than recomputed. The classical period theory is due to
Wall~\cite{Wall}. Our choices of positive recurrence weights, the accompanying
finite inequalities, and the estimate for odd Lucas differences give the
constants required for five mixed summands. We also extend Sun's
exceptional-modulus argument from odd squarefree moduli to general growing
moduli in Lemma~\ref{lem:exc}.

The pointwise prime estimate of Maynard--Pandey--Radziwi\l\l~\cite[Theorem 1.1]{MPR}
suffices for $u=1,2,3$. For $u=4$, we combine their factor decomposition with
Chen--Gupta--Li's estimates for finite character sums~\cite[Lemma 9.2 and
Proposition 10.2]{CGL} to derive the fourth-moment bound in
Proposition~\ref{prop:high4}. Here a fixed smooth weight is applied to the prime variable, as defined in
Section~\ref{sec:defs}. The relation between fourth moments and counts of
quadruples of characters and real parameters is already used
in~\cite[Lemma 9.2]{CGL}.
We apply it to the prime sum with the conductor factors and integration range
needed here. Kumchev's theorem~\cite[Theorem 2]{Kum} handles the remaining
minor arcs. The definitions and the deduction are given in Section~\ref{sec:proofs}.

The proof is computer-assisted in the determination of explicit bounds for
its constants and the exponents measuring how small the large-value sets
are (the large-deviation exponents). Their ranges matter: the
major-arc lower bound must exceed the minor-arc upper bound, and the set on
which the recurrence sums are large must have sufficiently small measure.
These conditions are proved by exact integer and rational comparisons, with
all truncation and rounding errors bounded in the required direction. The
analytic implications, infinite tails and limiting arguments are proved in
the text. Thus the computation supplies rigorous finite steps of the proof,
not experimental evidence for the theorem. Appendix~\ref{app:cert} describes the data and checking programs supplied
with the paper.

Section~\ref{sec:defs} gives the notation and the estimates used in the main proof. Section~\ref{sec:mainpf} derives the theorem and the counting result. Section~\ref{sec:proofs} proves the analytic and large-deviation estimates. The recurrence-difference averages and the finite verification details are given in the appendices.

\section{Notation and the main estimates}\label{sec:defs}
\subsection{Recurrence sums and parity}
Write $e(t)=\exp(2\pi it)$ and $\T=\mathbb R/\mathbb Z$. Integrals on $\T$
use Lebesgue measure of total mass one; $\meas E$ denotes that measure and
$|t|_\T=\min_{k\in\Z}|t-k|$ denotes distance to the nearest integer.
For a finite set, $|E|$ or $\#E$ denotes its number of elements.
The symbol $\ind_E$ is the indicator of a set or condition: it is one when
that condition holds and zero otherwise. The norms $\|f\|_2$ and
$\|f\|_\infty$ denote the usual $L^2$ and essential-supremum norms, on $\T$
unless another domain is specified. For integers, $(a,b)$ denotes their
greatest common divisor, and $\lcm(a,b)$ their least common multiple.
A positive integer is squarefree if no prime square divides it.

Let $\mu$ be the M\"obius function: it is zero off the squarefree integers,
and equals $(-1)^k$ on a product of $k$ distinct primes. Euler's function
$\varphi(q)$ counts the residue classes coprime to $q$. The von Mangoldt
function $\Lambda$ equals $\log p$ at each prime power $p^k$, $k\ge1$, and
is zero otherwise. We use $f\ll g$, or $f=O(g)$, for $|f|\le Cg$ with an
allowed constant $C$, and $f=o(g)$ for $f/g\to0$. For positive quantities,
$f\asymp g$ means both $f\ll g$ and $g\ll f$. Put
\[
 \phi=\frac{1+\sqrt5}{2},\qquad \psi=\frac{1-\sqrt5}{2},\qquad
 \theta=\frac{1999}{2000}.
\]
For sufficiently large $N$, choose $J=3M$ to be the largest positive multiple of three for which $L_J\le N^\theta/100$. The identities $F_n=(\phi^n-\psi^n)/\sqrt5$ and $L_n=\phi^n+\psi^n$, known as Binet's formulas, give
\begin{equation*}
 J=\frac{\theta\log N}{\log\phi}+O(1).
\end{equation*}
Define the four exponential sums directly by
\begin{align*}
 A(\alpha)&=\sum_{\substack{1\le j\le J\\j\equiv2\ (3)}}e(F_{j+1}\alpha),&
 B(\alpha)&=\sum_{\substack{1\le j\le J\\j\not\equiv2\ (3)}}e(F_{j+1}\alpha),\\
 C(\alpha)&=\sum_{\substack{1\le j\le J\\3\mid j}}e(L_j\alpha),&
 D(\alpha)&=\sum_{\substack{1\le j\le J\\3\nmid j}}e(L_j\alpha),
\end{align*}
where congruences marked $(3)$ are modulo three,
and normalize them by
\[
 a=A/M,\qquad b=B/(2M),\qquad c=C/M,\qquad d=D/(2M).
\]
The terms in $a,c$ are even; those in $b,d$ are odd. All four normalized sums have modulus at most one. The Fibonacci values in each selected subsequence are distinct, because the first Fibonacci index used is $2$.

For the vector of absolute values $\boldsymbol x=(x_a,x_b,x_c,x_d)=(|a|,|b|,|c|,|d|)$, set
\begin{equation*}
 \begin{aligned}
 P_e(a,b,c,d)&=\frac3{10}ad^2+\frac7{10}bcd,\\
 P_o(a,b,c,d)&=\frac12bd^2+\frac3{10}acd+\frac15bc^2,
 \end{aligned}
\end{equation*}
and let $T_e(\boldsymbol x),T_o(\boldsymbol x)$ be the same polynomials evaluated at the nonnegative real variables $\boldsymbol x$. Here and below $\nu=e,o$ means even or odd. Thus $|P_\nu|\le T_\nu\le1$. For $u=1,2,3$, define
\begin{equation}\label{eq:Wu}
 W_{u,\nu}=b^{u-1}d^{3-u}P_\nu(a,b,c,d),\qquad \nu=e,o.
\end{equation}
Every monomial has $u$ Fibonacci factors and $5-u$ Lucas factors. The subscript $\nu$ denotes the parity of its recurrence sum. The two extra factors in~\eqref{eq:Wu} select odd terms, so their total is even.

For $u=4$, use
\begin{equation*}
 W_{4,e}=ab^3d,\qquad W_{4,o}=b^4d.
\end{equation*}
In the finite expansion $W_{u,\nu}(\alpha)=\sum_R w_R e(R\alpha)$, all Fourier coefficients $w_R$, with $u,\nu,J$ fixed, are nonnegative and $\sum_Rw_R=1$. Its support is the set of integers $R$ for which $w_R\ne0$. Its largest Fibonacci index is $J+1$, and $F_{J+1}\le L_J$. Hence every recurrence sum $R$ in its support satisfies
\begin{equation}\label{eq:Rsize}
 0<R\le5L_J\le N^\theta/20=o(N).
\end{equation}
If $\nu$ is the parity of $N$, then $N-R$ is a positive even integer in $[N/2,N]$ for all sufficiently large $N$.

Every terminating decimal in this paper is an exact rational number. Implied constants may depend on parameters fixed before $N$ tends to infinity. The notation $N^{o(1)}$ denotes a factor bounded by $N^\varepsilon$ for every fixed $\varepsilon>0$, once $N$ is sufficiently large. Fixed small power losses are recorded separately.

\subsection{Major arcs and quadratic estimates}
Let
\[
 S(\alpha)=\sum_{N^{0.99}<p\le N}(\log p)e(p\alpha),
\]
and define
\[
 \MA(P)=\bigcup_{\substack{q\le P\\(a_0,q)=1}}
 \left\{\alpha\in\T:\left|\alpha-\frac{a_0}{q}\right|_{\T}\le\frac{P}{qN}\right\}.
\]
The numerator $a_0$ is unrelated to the recurrence sum $a$. A common truncation will be selected in the range
\begin{equation}\label{eq:arcs}
 N^{0.439}\le P_+\le N^{0.441},\qquad
 \MA_+=\MA(P_+),\qquad\m=\T\setminus\MA_+.
\end{equation}
Put
\[
 E_B=5.697364,\qquad E_D=5.832459995.
\]
\begin{lemma}\label{lem:prime}
The truncation in~\eqref{eq:arcs} can be chosen so that
\[
 \sup_{\alpha\in\m}|S(\alpha)|\le N^{19/24+o(1)},
\]
and
\begin{equation}\label{eq:energy}
 \int_\m|Sb|^2\dd\alpha\le(E_B+o(1))N,\qquad
 \int_\m|Sd|^2\dd\alpha\le(E_D+o(1))N.
\end{equation}
\end{lemma}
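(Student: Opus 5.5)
The plan has three parts: the pointwise bound from Maynard--Pandey--Radziwi\l\l{}, the two energy bounds by expanding the square, and a final choice of $P_+$ in the range \eqref{eq:arcs}.

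\emph{Pointwise bound.} For $\alpha\in\m$, Dirichlet's theorem with parameter $Q=N/P_+$ gives $|\alpha-a_0/q|\le 1/(qQ)$ with $q\le Q$. Since $\alpha\notin\MA_+$, either $q>P_+$, or $q\le P_+$ with $|\alpha-a_0/q|>P_+/(qN)$. The pointwise estimate of Maynard--Pandey--Radziwi\l\l~\cite[Theorem 1.1]{MPR} bounds $S(\alpha)$ in terms of $q$ and $N|q\alpha-a_0|$. Applied with $P_+\ge N^{0.439}$, it should give $N^{19/24+o(1)}$ throughout $\m$. Removing the primes $p\le N^{0.99}$ costs nothing, since that range can be handled by partial summation or treated trivially.

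\emph{Energy bounds.} I will expand the square:
\[
\int_\m|Sb|^2\dd\alpha=\frac1{4M^2}\sum_{j,j'}\int_\m|S(\alpha)|^2e\bigl((F_{j+1}-F_{j'+1})\alpha\bigr)\dd\alpha .
\]
The diagonal $j=j'$ contributes at most $\frac1{2M}\int_\T|S|^2\le N\log N/(2M)$, which is $O(N)$ with a constant that is small relative to the main term. For the off-diagonal terms, write $\int_\m=\int_\T-\int_{\MA_+}$. The full integral $\int_\T|S|^2e(h\alpha)\dd\alpha$ counts prime pairs with difference $h$. The uniform Johnston--Trudgian bound~\cite[Proposition 2]{JT}, via Lichtman, gives
\[
\int_\T|S|^2e(h\alpha)\dd\alpha\le(c_0+o(1))\,\Sg(h)\,N
\]
with an explicit $c_0$. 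The subtracted major-arc integral is evaluated asymptotically by the Pintz and Pintz--Ruzsa formulas~\cite{Pintz,PR}. It equals $\Sg_{P_+}(h)N$ up to $o(N)$, where $\Sg_{P_+}$ is the singular series truncated at $q\le P_+$. Its subtraction removes the main singular-series part, so what remains is essentially $(c_0-1)$ times the average of $\Sg(h)$, plus the tail $\Sg-\Sg_{P_+}$. The bound therefore reduces to averaging $\Sg(F_{j+1}-F_{j'+1})$ over pairs of odd Fibonacci indices, which I expect to produce the constant $E_B$. For odd differences this is Sun's mean bound $2.1985$~\cite[Proposition 4.1]{Sun}. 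The Lucas case $E_D$ is the same computation with our own odd-Lucas difference average from the appendices.

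\emph{Main obstacle.} The hardest step is controlling the truncation tail $\sum_{q>P_+}$ of the singular series on average over the differences $h$. Its size depends on how often $q$ divides $F_{j+1}-F_{j'+1}$, which is governed by Wall's period theory~\cite{Wall}. A few exceptional large moduli $q$ with short periods could make the tail large. These are handled by the exceptional-modulus argument of Lemma~\ref{lem:exc}, which extends Sun's argument to general growing moduli. I will choose $P_+$ in the admissible window \eqref{eq:arcs} to avoid them, for example by dyadic pigeonholing on the total contribution of the exceptional moduli. The same choice must be compatible with the $N^{19/24}$ pointwise bound. The final numerical constants come from the finite verified inequalities of the appendix.
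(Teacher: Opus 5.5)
\textbf{Main gap: the diagonal term.} Your off-diagonal bookkeeping matches the paper's. Johnston--Trudgian gives at most $3.391\,\Sg(h)N$ for the full integral. A major-arc asymptotic removes at least $0.9999\,\Sg(h)N$, which leaves the factor $2.3911$ in front of $\mathfrak A_{U,J}$.

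The diagonal, however, is not small. It equals $(1+o(1))N\log N/K$ with $K=2M=2J/3$, and $\log N/K\to3\log\phi/(2\theta)\approx0.7222$. Bounding it by $\frac1K\int_\T|S|^2$ therefore gives at best
\[
 2.391(2.1985)+0.7221>5.97>E_B,\qquad 2.391(2.255)+0.7221>6.11>E_D .
\]
So you do not obtain the lemma with its stated constants. These weaker constants would also break the main proof: $\lambda$ times either of them exceeds $2>1.99426$.

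The missing idea is to subtract a major-arc lower bound for the diagonal as well. Every prime $p>N^{1-\delta}$ is coprime to all $q\le P$. The Ramanujan-sum identity and Cauchy's inequality then give $\sum_{(a_0,q)=1}|S(a_0/q+\beta)|^2\ge\mu(q)^2|S(\beta)|^2/\varphi(q)$. Combining this with disjointness of the arcs and $\sum_{q\le x}\mu(q)^2/\varphi(q)\sim\log x$ yields $\int_{\MA(P)}|S|^2\dd\alpha\ge(0.39-o(1))N\log N$ for $P\ge N^{0.399}$ (Lemma~\ref{me:diagonal}). The diagonal then costs only $0.61\log N/K$, and $2.3911(2.1985)+0.61\cdot\frac32(0.481453)=5.697362845<E_B$. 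The Lucas analogue equals $E_D$.

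\textbf{The off-diagonal subtraction.} The paper never needs the difference asymptotic on $\MA_+$ itself, nor control of a tail $\sum_{q>P_+}$, nor a pigeonholed choice of $P_+$. It applies Pintz--Ruzsa on the smaller arcs $\MA(P_-)$ with $N^{0.399}\le P_-\le N^{0.4}$. There the formula gives the full $\Sg(h)N$ up to relative error $10^{-4}$. The only exceptions are shifts with $r_\nu\mid C_-h$, for boundedly many growing conductors $r_\nu$; these come from possible zeros near $1$, not from moduli with short Fibonacci periods. Lemma~\ref{lem:exc} shows these shifts carry singular-series weight $o(K^2)$.

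Since $|S_-U|^2\ge0$ and $\MA(P_-)\subset\MA_+$, the paper subtracts at the level of the whole integrand, $\int_\m\le\int_\T-\int_{\MA(P_-)}$. It then passes from $S_-$ to $S$ by \eqref{eq:transfer}. Your termwise subtraction over $\MA_+$ would need a difference asymptotic valid at $P_+\approx N^{0.44}$, which you have not justified.

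\textbf{The pointwise bound.} Dirichlet's theorem with parameter $N/P_+$ allows $q$ up to $N^{0.561}$. That is outside the range $q\le\sqrt X$ of Lemma~\ref{lem:MPR}. Use $q\le\sqrt N$ instead, and note that $B_N>P_+$ on $\m$.

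The primes $p\le N^{0.99}$ cannot be discarded trivially, since that costs $N^{0.99}$. Apply the estimate again at $X=N^{0.99}$ with a fresh approximation; there $B_X>(X/N)P_+$ gives $N^{0.7755+o(1)}$. Then subtract the two prefix sums and remove the prime powers.
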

\begin{lemma}\label{lem:major}
For the same truncation, let $\nu$ be the parity of $N$. For $u=1,2,3$,
\[
 \int_{\MA_+}S(\alpha)^2W_{u,\nu}(\alpha)e(-N\alpha)\dd\alpha
 \ge(1.99426-o(1))N.
\]
\end{lemma}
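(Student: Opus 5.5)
We expand $W_{u,\nu}(\alpha)=\sum_R w_R e(R\alpha)$ with $w_R\ge0$ and $\sum_R w_R=1$. This gives
\[
 \int_{\MA_+}S(\alpha)^2W_{u,\nu}(\alpha)e(-N\alpha)\dd\alpha=\sum_R w_R\int_{\MA_+}S(\alpha)^2e(-(N-R)\alpha)\dd\alpha .
\]
By \eqref{eq:Rsize} and the parity remark, every $n=N-R$ in the support is an even integer in $[N/2,N]$, and $n=N-O(N^\theta)$. For each such $n$ we apply the major-arc formula of Pintz and Pintz--Ruzsa for $S^2$ on $\MA(P_+)$ with $P_+\le N^{0.441}$:
\[
 \int_{\MA_+}S^2e(-n\alpha)\dd\alpha=\Sg(n)\,J_0(n)+E(n),
\]
where $J_0(n)=n-2N^{0.99}+O(N^{0.99})$ is the singular integral for primes in $(N^{0.99},N]$, so $J_0(n)=(1-o(1))N$ uniformly. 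The error $E(n)$ comes from the truncation and from possible exceptional (Siegel) characters. Since $\sum_R w_R\Sg(N-R)$ involves the arithmetic of $R$, we cannot bound $\Sg\ge 2C_2$ termwise and stop; that would only give $1.32N$. We must average the singular series against the recurrence distribution.

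We write $\Sg(n)=\sum_{q}\mu(q)^2 c_q(-n)/\varphi(q)^2$, using its truncated form up to $P_+$ together with an admissible tail. This turns the main term into
\[
 N\sum_{q}\frac{\mu(q)^2}{\varphi(q)^2}\sum_{a_0\bmod q}^{*}e(-Na_0/q)\,W_{u,\nu}(a_0/q).
\]
Equivalently, we have a weighted count of how often the recurrence sum $R$ hits each residue class. For fixed small $q$, $W_{u,\nu}(a_0/q)$ is computed exactly from the Pisano periods via Wall's theory: $F_j,L_j\bmod q$ are periodic in $j$ with period $\pi(q)$, and $J=3M\to\infty$. Hence $a,b,c,d$ at $a_0/q$ tend to explicit finite averages over a period, up to $O(\pi(q)/J)$. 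For $q$ beyond a fixed cutoff $Q_0$, we bound the contribution using $|W_{u,\nu}|\le T_\nu(|a|,|b|,|c|,|d|)$ together with Sun's difference-mean estimates, for example $\sum_q\mu^2/\varphi(q)^2\cdot\#\{\text{coinciding residues}\}$. We treat the growing moduli $q\le P_+$ with Lemma~\ref{lem:exc} to handle possible exceptional moduli. Its effect must be either a negligible loss, or a sign-controlled term as in Pintz--Ruzsa, where the exceptional zero contributes with a factor of modulus at most $1$ against the main $\Sg$.

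The final step is a finite computation. For $q\le Q_0$ squarefree, we evaluate the local factors in product form, since the averaged singular series is multiplicative over primes after the period averaging. This yields the lower bound $\prod_{p}(\cdots)\ge1.99426+\delta$ for the limiting constant with each $\nu$ and $u=1,2,3$. We certify the bound by exact rational arithmetic and bound the tail over primes $p>Q_0$ analytically by $\sum_{p>Q_0}O(1/p^2)$. The weights $3/10,7/10$, etc., in $P_\nu$ were chosen precisely to make this product large. The main obstacle is the exceptional-character term. The Pintz--Ruzsa formula can contain a negative secondary term $-\Sg'(n)\tilde J(n)$ from a Siegel zero with modulus $r\le P_+$. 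Bounding its averaged contribution by $o(N)$, or absorbing it into the margin, is exactly what the extension of Sun's argument to growing non-squarefree moduli in Lemma~\ref{lem:exc} is designed to provide. I would first try to show that the recurrence weights equidistribute poorly enough modulo $r$ that $\sum_R w_R\chi$-twisted sums are $o(1)$ for large $r$.
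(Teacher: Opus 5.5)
Your skeleton matches the paper: expand $W_{u,\nu}=\sum_Rw_Re(R\alpha)$, apply Pintz's formula to each even target $N-R$, and average $\Sg(N-R)$ against the recurrence distribution instead of using $\Sg\ge2C_2$. The averaging step, however, fails as you propose it.

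\textbf{The tail estimate cannot be small.} Expanding $\Sg$ in signed Ramanujan sums forces you to bound a tail over $q>Q_0$. The bound you suggest, via $|W_{u,\nu}|\le T_\nu$ and collision counts, contains the diagonal: $\sum_{a_0\bmod q}|b(a_0/q)|^2=qK^{-2}\#\{(i,j):q\mid u_i-u_j\}\ge q/K$. So the tail over $Q_0<q\le P_+$ is only bounded by roughly $\log P_+/K\approx0.32$ or more, and the bound diverges if continued to all $q$. The available margin is of order $10^{-5}$. Sun's means $\mathfrak A_{U,J}$ control only the off-diagonal pairs.

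\textbf{The averaged quantity is not multiplicative.} Residues modulo different primes are driven by the same index, and the periods share factors; for instance $7\mid F_n$ forces $3\mid F_n$. A product of local factors therefore gives the wrong constant.

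\textbf{The missing idea is positivity.} By \eqref{eq:positive}, $\Sg(h)=2C_2\ind_{2\mid h}\sum_{r\mid h}\omega(r)$ has nonnegative terms. Hence $\sum_Rw_R\Sg(N-R)\ge2C_2\sum_{r\le300}\omega(r)\Pr(R\equiv N\bmod 2r)$, with no tail estimate at all. Each probability is read off, up to $O_r(J^{-1})$, from joint histograms modulo $2r$ over a full period $\pi(2r)$. It is then minimized over residues of the correct parity, which also makes the bound uniform in $N$.

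\textbf{The exceptional-zero step is left open.} The mechanism you suggest, small $\chi$-twisted averages of $w_R$, is not what is needed. A merely sign-controlled term with $|\Sg(\chi_i,\chi_j,m)|\le\Sg(m)$ can cancel the main term entirely on the targets it affects, and a margin of $10^{-5}$ cannot absorb that. In the paper:
\begin{enumerate}
\item The secondary terms involve at least one of the boundedly many zeros with $\beta\ge1-H/\log N$ and conductor at most $P_+$, paired with the pole or with another zero. There is not just one Siegel zero.
\item By \eqref{eq:040} these terms are negligible unless $U\le U_0$, which forces $r_i\mid C_\epsilon m$.
\item Zeros of bounded conductor eventually leave the strip, so the surviving conductors $r_\nu$ tend to infinity.
\item For targets with $r_\nu\nmid C_\epsilon m$ the formula gives $\Sg(m)m+O(\epsilon\Sg(m)N)+o(\Sg(m)N)$.
\item On the remaining targets only the crude bound $|G_P(m)|\ll\Sg(m)N$ from \eqref{eq:042} is used.
\item Lemma~\ref{lem:exc}, applied with four indices fixed and one Lucas index summed, shows that those targets carry singular-series weight $o(J^5)$. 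Their total contribution is therefore $o(N)$.
\end{enumerate}

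\textbf{A smaller omission.} Pintz's formula is stated for primes in $(N^{1-\delta},N]$ with $\delta$ sufficiently small, not necessarily $0.01$. You therefore also need the transfer \eqref{eq:transW} to $S$, which rests on $\|S_\delta U\|_2=O(\sqrt N)$ from Lemma~\ref{lem:JT} and \eqref{eq:coll}.
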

\begin{lemma}\label{lem:large}
Set
\[
 \lambda=\frac{4274}{12500}=0.34192,\qquad
 \mathcal E_\nu=\{\alpha:T_\nu(\boldsymbol x(\alpha))>\lambda\}.
\]
Then
\[
 \meas(\mathcal E_e\cup\mathcal E_o)\ll N^{-11667/20000+o(1)}.
\]
In particular, the exponent exceeds $7/12$ by $1/60000$.
\end{lemma}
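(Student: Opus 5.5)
The plan is the standard large-deviation argument for lacunary exponential sums, as in Sun and in Pintz--Ruzsa: a moment (large-values) bound combined with an exact finite computation. The sums $a,b,c,d$ are built from recurrence values $F_{j+1}$ and $L_j$ along residue classes modulo three. For $k\ge1$ we have the expansion
\[
 \int_\T \prod \text{(products of } |a|^{2k_a}|b|^{2k_b}|c|^{2k_c}|d|^{2k_d})\dd\alpha ,
\]
which counts solutions of linear equations among Fibonacci and Lucas numbers. Exact moments alone will not give an exponent as strong as $11667/20000$. We therefore use the Heath-Brown--Puchta/Pintz--Ruzsa dichotomy and reduce $\meas\mathcal E_\nu$ to a count of digit patterns.

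First, we write each recurrence value through Binet's formula. This lets us split the index range $[1,J]$ into $J/h$ consecutive blocks of a fixed length $h$. For $\alpha\in\T$ we look at the orbit $\alpha F_{j}$, $\alpha F_{j+1}$ modulo one. The pair $(\|\alpha F_j\|,\|\alpha F_{j+1}\|)$ evolves by the Fibonacci matrix, and the Lucas values are linear combinations $L_j=F_{j-1}+F_{j+1}$. Consequently, on each block the contribution to $a,b,c,d$ is determined, up to error $O(\delta)$, by a discretized state: the pair $(\alpha F_{j_0},\alpha F_{j_0+1})$ rounded to a grid of mesh $\delta$. Define for a fixed block length $h$ and states $s$ the local block vector $(x_a,x_b,x_c,x_d)(s)$. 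Then $\boldsymbol x(\alpha)$ is, up to $O(\delta+h/J)$, bounded by the averages of $|\cdot|$-contributions. More precisely, we use the triangle inequality only across blocks, keeping complex phases within a block, so that $T_\nu(\boldsymbol x)$ is dominated by the polynomial of block averages.

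Second, we set up a transfer-operator count. The number of admissible state sequences along the blocks, weighted by the measure of $\alpha$ realizing them, is governed by a nonnegative matrix $K_t$ with entries $e^{t\cdot(\text{block statistic})}$ and transitions allowed by the Fibonacci dynamics. The set of $\alpha$ with block-average statistics in a region is covered by $\ll\phi^{-J}\cdot\#\{\text{sequences}\}$ intervals of length $\asymp\phi^{-J}$. A Chernoff bound gives
\[
 \meas\{T_\nu(\boldsymbol x)>\lambda\}\le \exp\Bigl(J\bigl(\log\rho(t)-\log\phi\,\cdot\,(\text{normalization}) - t\lambda'\bigr)+o(J)\Bigr)
\]
for linear functionals dominating $T_\nu$ on the relevant region. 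Because $T_\nu$ is a cubic, not linear, we cover the region $\{T_\nu>\lambda\}\cap[0,1]^4$ by finitely many boxes. On each box we replace $T_\nu>\lambda$ by a linear condition $\sum \beta_i x_i>\lambda_i$ that is implied, using monotonicity of $T_\nu$ in each variable. We then optimize $t$ box by box. With $J=\theta\log N/\log\phi$, a bound $\exp(-\kappa J)$ translates to $N^{-\theta\kappa/\log\phi}$. So we need $\theta\kappa/\log\phi\ge 11667/20000$ on every box, and the union over the two parities and the finitely many boxes costs only a constant.

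Third comes the certification. The spectral radii $\rho(t)$ of the finite nonnegative matrices are bounded above rigorously by exhibiting positive vectors $v$ with $K_tv\le \rho v$ entrywise in exact rational arithmetic. The box cover and the monotonicity inequalities are likewise checked exactly, with all rounding directed. The main obstacle is the middle step: we must make the block discretization lossless enough that the resulting exponent still exceeds $7/12$. The margin is only $1/60000$, so the block length, the grid, and the box cover must be tuned finely, and the error terms $O(\delta)$ must be absorbed by shrinking $\lambda$ slightly inside the computation rather than afterward. I would first try to use Sun's residue-class structure (period of Fibonacci modulo small moduli) to choose $h$ a multiple of three, which aligns the blocks with the definitions of $A,B,C,D$.
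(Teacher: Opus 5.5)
Your middle step has a genuine gap, and it is not a matter of tuning.

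First, the claim that a block's contribution is determined up to $O(\delta)$ by the $\delta$-rounded pair $(\alpha F_{j_0},\alpha F_{j_0+1})$ is false. The update $(x,y)\mapsto(y,x+y)$ expands errors at rate $\phi$, so the $k$th phase in a block is known only to $O(\delta\phi^k)$. This forces $\delta\ll\phi^{-h}$ and a state space of size at least about $\phi^{2h}$.

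Second, and decisively, the triangle inequality across blocks replaces each $|U|$ by an average of block moduli. For typical $\alpha$ that average has size $\asymp h^{-1/2}$, whereas $|U|$ itself has size $\asymp J^{-1/2}$. In the independent-phase model, the per-term log-moment generating function of this statistic exceeds that of $\operatorname{Re}(e^{-i\gamma}U)$ by an amount of order $t h^{-1/2}$ for moderate $h$, and of order $h^{-1}\log(t\sqrt h)$ for large $h$. For the relevant tilts $t\approx0.2$--$0.4$ this is several hundredths at $h=20$ and still a few thousandths at $h=10^3$.

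That is a fixed drop in the large-deviation exponent, not an $O(\delta)$ perturbation, and it dwarfs the margin of $1/60000$ over $7/12$. You cannot compensate through the threshold either: $\lambda$ already sits just below the ceiling $1.99426/E_D<0.3419244$ forced by the case $u=1$, and lowering the threshold only enlarges the event. Pushing the loss below $10^{-5}$ would need $h$ in the hundreds of thousands, hence a matrix of astronomical size. You flag this step as the main obstacle, but nothing in the proposal overcomes it, so the plan cannot certify $11667/20000$.

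The missing idea is not to discretize $\alpha$ at all.

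\emph{Global phases.} Remove each modulus with a single global phase. If $\ell_{\boldsymbol t}(\boldsymbol x)>H$, some choice of the $\gamma_U$ from a fixed grid of $2000$ phases makes $\sum_Ut_U\operatorname{Re}(e^{-i\gamma_U}U)$ exceed $JH\cos(\pi/2000)$. This costs only a bounded union, and the resulting expression is a sum of single-index terms $t\cos(2\pi F_{j+1}\alpha-\gamma)$ and $t\cos(2\pi L_j\alpha-\gamma')$.

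\emph{Exponential moment via zero relations.} Majorize each factor $e^{t\cos v}$ by the positive degree-three trigonometric polynomial $B_t(v)$ with Bessel coefficients, expand, and integrate over $\alpha$. Only the zero relations \eqref{eq:zero} survive. They are counted exactly by the carry automaton \eqref{eq:carry}, whose accepted paths stay in a fixed box ($377$ states after Lemma~\ref{lem:reduce}). This yields \eqref{eq:moment} with $\rho$ certified by a positive integer vector, and Markov's inequality gives \eqref{eq:rate}. So, contrary to your opening remark, exact moments do suffice once they are exponential moments computed through this automaton.

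\emph{Covering step.} This also needs more than monotonicity. On a box straddling $\{T_\nu=\lambda\}$, monotonicity only gives $\ell_{\boldsymbol t}(\boldsymbol x)\ge\ell_{\boldsymbol t}(\boldsymbol l)$ at the lower corner $\boldsymbol l$. That loses an amount proportional to the box size, exactly where the certificates are tight. The paper instead partitions $[0,1]^4$ into $17$ boxes per parity and discards one by monotonicity. On the other $16$ it verifies the exact identities \eqref{eq:poly}, which write $\lambda_0-T_\nu$ as a nonnegative combination of products of several $g_i=H_i-\ell_{\boldsymbol t_i}$ and box factors. This proves $\bigcap_i\{\ell_{\boldsymbol t_i}\le H_i\}\cap[0,1]^4\subseteq\{T_\nu\le\lambda\}$ without that loss.
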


\subsection{Estimates for the fourth composition}
Set $\lambda_4=0.3499$ and
\[
 \mathcal E_{4,e}=\{|abd|>\lambda_4\},\qquad
 \mathcal E_{4,o}=\{|b|^2|d|>\lambda_4\}.
\]
These sets differ from $\mathcal E_e,\mathcal E_o$. Write
\[
 M_{4,e}=1.99405,\quad M_{4,o}=1.9939,\qquad
 \delta_{4,e}=0.56217,\quad\delta_{4,o}=0.5637.
\]
\begin{lemma}\label{lem:fourdat}
If $\nu$ is the parity of $N$, then
\begin{align}
 \int_{\MA_+}S(\alpha)^2W_{4,\nu}(\alpha)e(-N\alpha)\dd\alpha
 &\ge(M_{4,\nu}-o(1))N,\label{eq:fourmaj}\\
 \meas\mathcal E_{4,\nu}&\ll N^{-\delta_{4,\nu}+o(1)}.\label{eq:fourmeas}
\end{align}
\end{lemma}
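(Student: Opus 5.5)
The lemma has two independent parts: a major-arc lower bound \eqref{eq:fourmaj} and a large-value measure bound \eqref{eq:fourmeas}. I would prove them in that order, following the template of Lemmas~\ref{lem:major} and~\ref{lem:large}.

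\emph{Major arcs.} Expand $W_{4,\nu}=\sum_R w_R e(R\alpha)$ with $w_R\ge0$ and $\sum_R w_R=1$. Then
\[
 \int_{\MA_+}S^2W_{4,\nu}e(-N\alpha)\dd\alpha=\sum_R w_R\int_{\MA_+}S(\alpha)^2e(-(N-R)\alpha)\dd\alpha .
\]
By \eqref{eq:Rsize}, each $n=N-R$ is an even integer in $[N/2,N]$. The Pintz / Pintz--Ruzsa major-arc formula gives each inner integral as $\Sg(n)\,n\,(1+o(1))$ up to the exceptional-modulus correction. The correction involves a possible exceptional character of modulus $r\le P_+$ and a factor depending on whether $r'\mid n$, where $r'$ is the odd part.

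Since $n\ge N/2$ but the main term should be compared with $N$, I would write $n=N-R=N(1-o(1))$, so the main term is $N$ times an average. Expanding the singular series $\Sg(n)=\prod_{p\mid n,p>2}\frac{p-1}{p-2}\cdot 2\prod_{p>2}(1-(p-1)^{-2})$ and averaging over the distribution of $R$ reduces everything to estimating
\[
 \sum_R w_R\,\Sg(N-R)\ \ge\ \Sg_{\min}\sum_R w_R\sum_{d\mid N-R}g(d),
\]
with $g$ multiplicative and supported on odd squarefree $d$. This requires the proportion of $R$ in a residue class modulo $d$. For $R$ a sum of Fibonacci and Lucas terms with indices in residue classes mod $3$, these proportions follow from the Pisano periods (Wall) and are equidistributed over periods.

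The lower bound $M_{4,\nu}$ would then come from truncating to small $d$, using Sun's bound to control large $d$ and the exceptional term (Lemma~\ref{lem:exc}), and verifying the finite computation by exact rational arithmetic. Because the correction can only decrease the value, I would bound it in absolute value via the Johnston--Trudgian uniform constant.

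\emph{Large values.} For \eqref{eq:fourmeas}, I would use the large-deviation method applied in Lemma~\ref{lem:large}. Decompose $J$ into $K$ blocks of bounded length $\ell$. On each block, $b$, $d$ and $a$ factor approximately as products of short sums $\prod_k \beta_k(\alpha)$, since $F_{j+1}\alpha \bmod 1$ for consecutive $j$ in a block is determined by two consecutive values $(F_m\alpha,F_{m+1}\alpha)$ modulo $1$. Applying Chernoff's bound in the dynamical form, the measure of $\{|abd|>\lambda_4\}$ is at most $\inf_{t>0}\lambda_4^{-tK}\rho(t)^K$. Here $\rho(t)$ is the spectral radius of a transfer operator on $\T^2$ with kernel $|a_{\mathrm{blk}}b_{\mathrm{blk}}d_{\mathrm{blk}}|^t$, which is equivalently a large-deviation rate for a Markov chain on a fine grid. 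Converting $K\approx J/\ell$ and $J=\theta\log N/\log\phi$ gives an exponent $\delta=\theta(t\log\lambda_4^{-1}-\log\rho(t))/(\ell\log\phi)$ with a discretization error. The certified numerics must show $\delta\ge\delta_{4,\nu}$. The odd case $|b|^2|d|$ is handled the same way.

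\emph{Main obstacle.} The hardest step is the rigorous numerical upper bound on $\rho(t)$ after discretization, with grid error controlled in the right direction. I would first try a Lipschitz bound on the kernel over grid cells together with a Perron--Frobenius row-sum bound after a positive eigenvector-like weighting.
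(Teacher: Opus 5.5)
\paragraph{Overall.} Your major-arc half is consistent with the paper: the positive divisor expansion \eqref{eq:positive}, truncation to small odd squarefree moduli by positivity, and the minimum over the admissible residue class of the periodic convolution histograms as in \eqref{eq:Mk}. The large-deviation half, however, has a genuine gap.

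\paragraph{The large-deviation gap.} The normalized sums $a,b,d$ are \emph{averages} of $e(F_{j+1}\alpha)$ or $e(L_j\alpha)$ over the indices, not products of block factors. So $|a|^{tK}$ is not comparable to $\prod_k|a_{\mathrm{blk},k}|^t$: a single vanishing block kills the product but not the average. Consequently no transfer operator with kernel $|a_{\mathrm{blk}}b_{\mathrm{blk}}d_{\mathrm{blk}}|^t$ gives an upper bound for $\meas\{|abd|>\lambda_4\}$. What does factor along the indices is the exponential of a \emph{linear} functional, e.g.\ $\exp\bigl(t\sum_j\cos(2\pi F_{j+1}\alpha-\gamma)\bigr)=\prod_j e^{t\cos(2\pi F_{j+1}\alpha-\gamma)}$. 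The product event must therefore be linearized first, and this is the missing idea. The paper does it by weighted AM--GM:
\begin{itemize}
\item $|abd|>\lambda_4$ forces $\frac13(t_a|a|+2t_b|b|+2t_d|d|)>(4\lambda_4t_at_bt_d)^{1/3}\ge H_e$;
\item $|b|^2|d|>\lambda_4$ forces $\frac13(2t_b|b|+2t_d|d|)>(2\lambda_4t_b^2t_d)^{1/3}\ge H_o$.
\end{itemize}
It then replaces each $|U|$ by $\operatorname{Re}(e^{-i\gamma_U}U)$ with $\gamma_U$ on a fixed grid of $2000$ phases. This loses only the factor $\cos(\pi/2000)$ and a bounded union bound. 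Markov's inequality applied to \eqref{eq:moment} then gives \eqref{eq:rate}.

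\paragraph{Bounding the moment.} Your plan for the moment itself, a Lipschitz grid approximation of a transfer operator on $\T^2$, would need its own justification. The map $(x,y)\mapsto(y,x+y)$ is invertible and area-preserving. So the decay of the $\alpha$-integral comes from Fourier cancellation, not from a smoothing operator that a grid could capture. The paper uses no discretization at all:
\begin{itemize}
\item Each factor $e^{t\cos v}$ is majorized by $B_t(v)=\sum_{|r|\le3}c_r(t)e^{irv}$ with all $c_r(t)\ge0$.
\item After expanding and integrating, only the zero relations \eqref{eq:zero} survive. Because their carries stay bounded, they are counted exactly by products of a nonnegative matrix on the finite state set $\mathcal S_+$ (Section~\ref{sec:carry} and Lemma~\ref{lem:reduce}).
\item An explicit positive integer vector with $\mathcal M(\boldsymbol t_\nu)v_\nu<\rho_\nu v_\nu$ certifies the growth rate.
\item The exponent $\frac{\theta}{\log\phi}\bigl(H_\nu\cos\frac{\pi}{2000}-\frac13\log\rho_\nu\bigr)$ is checked by exact rational bounds to exceed $\delta_{4,\nu}$.
\end{itemize}

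\paragraph{Smaller points on the major arcs.}
\begin{itemize}
\item By positivity, no control of large moduli is needed for the lower bound.
\item The Johnston--Trudgian bound concerns $p-p'=h$ and plays no role in the major-arc lower bound.
\item The supplementary terms come from a bounded family of zeros, not a single exceptional character. They are controlled by \eqref{eq:039}--\eqref{eq:042}.
\item Lemma~\ref{lem:exc} shows that targets $m$ with $r_\nu\mid C_\epsilon m$ for a growing conductor $r_\nu$ carry singular-series weight $o(J^5)$. Hence they contribute only $o(N)$ in absolute value.
\end{itemize}
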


Fix a sufficiently small $\tau>0$. Choose an infinitely differentiable,
compactly supported function $w\in C_c^\infty(\mathbb R)$ with $0\le w\le1$, support in $[\tau,1]$, and $w=1$ on $[2\tau,1-2\tau]$. Put
\[
 S_w(\alpha)=\sum_p(\log p)w(p/N)e(p\alpha).
\]
\begin{lemma}\label{lem:smooth}
For every measurable $G\subseteq\T$ and $\nu=e,o$,
\begin{align*}
 \left|\int_G(|Sb|^2-|S_wb|^2)\dd\alpha\right|&\ll\sqrt\tau\,N+o(N),\\
 \left|\int_G(S^2-S_w^2)W_{4,\nu}e(-N\alpha)\dd\alpha\right|
 &\ll\sqrt\tau\,N+o(N).
\end{align*}
The constants multiplying $\sqrt\tau$ are independent of the derivatives of $w$ and can be chosen uniformly for the indicated class of weights.
\end{lemma}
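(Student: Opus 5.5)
The plan is to write the differences as $\Delta(\alpha)=S(\alpha)-S_w(\alpha)$ and reduce both integrals to $L^2$ bounds on $\T$ by Cauchy--Schwarz and Parseval. Since $0\le w\le1$ with $w=1$ on $[2\tau,1-2\tau]$ and support in $[\tau,1]$, the function $\Delta(\alpha)=\sum_p(\log p)c_p e(p\alpha)$ has coefficients $c_p=\ind_{N^{0.99}<p\le N}-w(p/N)$ with $|c_p|\le1$. These coefficients vanish except for $p\le 2\tau N$ and for $p\in(1-2\tau)N,N]$, together with the boundary point $p=N$ if it occurs. Hence, by Parseval and the prime number theorem (Chebyshev's bound suffices),
\[
 \|\Delta\|_2^2\le\sum_{p\le2\tau N}\log^2p+\sum_{(1-2\tau)N<p\le N}\log^2p\ll\tau N\log N .
\]
The weight $w$ enters only through $0\le w\le 1$ and the location of its transition regions, never through its derivatives. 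This is why the implied constants are uniform over the whole class of weights.

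For the first inequality, expand
\[
 |S|^2-|S_w|^2=\operatorname{Re}\bigl(\Delta\overline{(S+S_w)}\bigr),
\]
and use $|b|\le1$. This gives
\[
 \Bigl|\int_G(|Sb|^2-|S_wb|^2)\Bigr|\le\int_\T|\Delta|\,|S+S_w|\,|b|^2 .
\]
A bare Cauchy--Schwarz bound would lose a factor $\log N$, giving $\sqrt{\tau}N\log N$, so we must keep the weight $|b|^2$. We therefore apply Cauchy--Schwarz with the weight distributed as $\int|\Delta b|\,|(S+S_w)b|\le\|\Delta b\|_2\|(S+S_w)b\|_2$. We expand $\|Fb\|_2^2$ as $\sum_{m}|\sum_{p+r=m}\dots|^2$, with $b$ written as $(2M)^{-1}\sum e(F_{j+1}\alpha)$. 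The diagonal $r=r'$ then contributes $(2M)^{-1}\sum_p c_p^2\log^2p\ll\tau N\log N/M\ll\tau N$, because $M\asymp\log N$. The off-diagonal terms count primes $p,p'$ with $p-p'=F_{r'}-F_r$. These are controlled by the uniform upper bound for prime pairs with prescribed difference (Johnston--Trudgian), and the resulting singular-series average over odd Fibonacci differences is bounded (Sun's mean bound), exactly as in the derivation of \eqref{eq:energy}. Restricting $p$ to a set of density $O(\tau)$ multiplies the diagonal and off-diagonal contributions by $O(\tau)$, up to $o(N)$. This yields $\|\Delta b\|_2^2\ll\tau N+o(N)$. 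The same computation without the restriction gives $\|(S+S_w)b\|_2^2\ll N$, and the product of the two bounds is $\ll\sqrt\tau N+o(N)$.

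For the second inequality, write $S^2-S_w^2=\Delta(S+S_w)$ and bound
\[
 |W_{4,\nu}|\le|b|^2\cdot|b|\,|\ast|,
\]
where $\ast$ stands for the remaining unimodular-bounded factors of $W_{4,\nu}$, namely $ab$ or $b^2$ together with $d$. Keeping two factors of $b$ as the weight, the integral is at most $\int_\T|\Delta b|\,|(S+S_w)b|$, which reduces to the previous paragraph's bounds. The main obstacle is the off-diagonal prime-pair count restricted to $p$ in the short ranges $[0,2\tau N]$ and $[(1-2\tau)N,N]$. The step I would try first is to note that the Johnston--Trudgian upper bound for pairs $p,p+h\le X$ holds for every $X$. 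It can therefore be applied over the intervals $[0,2\tau N]$ and $[(1-2\tau)N,N]$ by differencing at $X=N$ and $X=(1-2\tau)N$. Since the bound has the shape $(C+o(1))\Sg(h)X$ with the main term linear in $X$, differencing gives a contribution $O(\tau)\Sg(h)N+o(N)$. Alternatively, one can bound only one of the two primes as restricted and the other trivially.
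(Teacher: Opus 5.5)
You have the same reduction as the paper. With $\Delta=S-S_w$ you bound both differences by $\|\Delta b\|_2\,\|(S+S_w)b\|_2$, using $|W_{4,\nu}|\le|b|^2$, and the diagonal of $\|\Delta b\|_2^2$ is $O(\tau N\log N/M)=O(\tau N)$. Everything then rests on the off-diagonal claim that restricting to the support of $\Delta$ costs a factor $O(\tau)$ in the prime-pair count for each nonzero even Fibonacci shift $h$.

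\textbf{The gap.} Your argument fails on the upper endpoint interval $I=((1-2\tau)N,N]$.
\begin{itemize}
\item Johnston--Trudgian gives only an \emph{upper} bound for pairs with $p,p'\le X$. The count in $I$ is, up to pairs straddling the cut, the count up to $N$ minus the count up to $(1-2\tau)N$. An upper bound for this difference needs a \emph{lower} bound for the second count, that is, a lower bound for prime pairs with difference $h$. No such bound is known.
\item The ``main term'' $(6.7814+\varepsilon)C_2X/(\log X)^2$ is not an asymptotic, so its linearity in $X$ gives nothing when you difference.
\item Your alternative does not help. Since $|h|=O(N^\theta)$, restricting only $p$ to $I$ forces $p'$ into essentially the same interval, so the problem is unchanged. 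If instead you drop the primality of $p'$, you lose a factor $\log N$: you get only $\|\Delta b\|_2^2\ll\tau N\log N$, and hence a final bound of order $\sqrt\tau\,N\sqrt{\log N}$.
\end{itemize}

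\textbf{The missing ingredient.} You need an upper-bound sieve for prime pairs in an arbitrary translated interval. For any interval $I$ of length $X$ and nonzero even $h$ with $|h|=o(X)$, one has $\#\{p,\,p+h\in I\}\ll X\Sg(h)/(\log X)^2$, uniformly in the position of $I$. This is the standard Selberg (or Brun) sieve applied to $n(n+h)$ on $I$. The paper checks that the remainder terms are bounded by the local root counts, independently of where $I$ sits.

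\textbf{Finishing the argument.} Apply this bound with $X=2\tau N$ to each endpoint interval. For the lower one, $(N^{0.99},2\tau N]$, Johnston--Trudgian at $X=2\tau N$ would also suffice. Since $\tau$ is fixed, $\log X\sim\log N$. Insert the weights $\log p\log p'\le(\log N)^2$ and average over shifts with the bounded singular-series mean \eqref{eq:coll}. This gives $\|\Delta b\|_2^2\ll\tau N+o(N)$, after which your Cauchy--Schwarz step completes the proof. The sieve constant depends only on the interval length, so your uniformity claim in $w$ survives.
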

For every $\alpha$, choose a reduced Dirichlet approximation, using a fixed ordering of the candidates,
\[
 \alpha=a_0/q+\beta,\qquad q\le\sqrt N,\quad
 |\beta|\le(q\sqrt N)^{-1}.
\]
The selected sets are measurable. Define
\[
 B_N=\max(q,qN|\beta|),\qquad
 \m_{\rm lo}=\{\alpha\in\m:B_N\le N^{0.46}\},\qquad
 \m_{\rm hi}=\{\alpha\in\m:B_N>N^{0.46}\}.
\]
On $\m$, one has $B_N>P_+$. Otherwise the selected approximation would place $\alpha$ on a major arc.
\begin{lemma}\label{lem:primein}
For the fixed $\tau,w$ above,
\begin{align}
 \sup_{\m_{\rm lo}}|S_w(\alpha)|&\le N^{0.7805+o(1)},\label{eq:lowprime}\\
 \int_{\m_{\rm hi}}|S_w(\alpha)|^4\dd\alpha&\le N^{2.5401+o(1)}.\label{eq:fourth}
\end{align}
These estimates hold for every sufficiently large scale $N$, irrespective of its parity.
\end{lemma}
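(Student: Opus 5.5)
We prove the two bounds separately, following the factor-decomposition strategy of Maynard--Pandey--Radziwi\l\l{} \cite{MPR}. The smooth weight $w$ is carried through every step; it only affects implied constants, since $\tau$ is fixed before $N\to\infty$.

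\emph{Pointwise bound \eqref{eq:lowprime}.} On $\m_{\rm lo}$ we have $P_+<B_N\le N^{0.46}$. Here we apply \cite[Theorem 1.1]{MPR} directly. Their estimate bounds $S(\alpha)$ in terms of $q$ and $qN|\beta|$ through the quantity $B_N$. The bound is monotone in $B_N$ on the admissible range, and its worst case over $P_+<B_N\le N^{0.46}$ sits at an endpoint. At the lower endpoint $B_N\approx N^{0.439}$ it should give the exponent $0.7805$ (compare $19/24\approx0.7917$ in Lemma~\ref{lem:prime}). The sharp cutoff in \cite{MPR} is replaced by $w(p/N)$ using partial summation. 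The derivatives of $w$ are $O_\tau(1)$, so only the implied constant changes. The work in this step is checking that the range $[N^{0.439},N^{0.46}]$ falls inside the domain of validity of \cite{MPR} and recomputing their exponent on it.

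\emph{Fourth moment \eqref{eq:fourth}.} Here $B_N>N^{0.46}$, so the pointwise bound alone is too weak. We first decompose $\Lambda$ by Heath-Brown's identity into type I and type II convolutions $\sum_{m\sim M'}\sum_{n\sim N/M'}\alpha_m\beta_n w(mn/N)e(mn\alpha)$, with at most $N^{o(1)}$ pieces. It then suffices to bound the fourth moment of each piece over $\m_{\rm hi}$, by H\"older's inequality. For each piece, write $\alpha=a_0/q+\beta$ and expand $e(mn a_0/q)$ in multiplicative characters modulo $q$, after removing common factors with $q$. Taking a Mellin transform in $mn$ with weight $w(mn/N)e(mn\beta)$ then reduces the sum to integrals of Dirichlet polynomials $M(\chi,\tfrac12+it)N(\chi,\tfrac12+it)$ over $|t|\ll qN|\beta|+N^{o(1)}$. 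The fourth power of such an expression becomes a count of quadruples $(\chi_1,\dots,\chi_4,t_1,\dots,t_4)$ weighted by products of Dirichlet polynomials, as in \cite[Lemma 9.2]{CGL}.

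Next we integrate over $\alpha$ in dyadic ranges $q\sim Q$, $qN|\beta|\sim Y$ with $\max(Q,Y)>N^{0.46}$. This integration produces the averaging over $a_0$ and $\beta$. The $a_0$-average gives orthogonality of Gauss-sum coefficients, which forces $\chi_1\chi_2\overline{\chi_3\chi_4}$ into a restricted set. The $\beta$-integral forces $t_1+t_2-t_3-t_4$ to be small. We then insert the large-values and finite character-sum estimates of \cite[Proposition 10.2]{CGL}, together with the mean value theorem for Dirichlet polynomials, to bound each dyadic block. The remaining task is to optimise over $M'$, $Q$ and $Y$ and confirm that the maximum exponent is at most $2.5401$. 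This is a finite linear-programming check on exponents, certified as in Appendix~\ref{app:cert}. As a sanity check, the trivial bound $\int|S_w|^4\le\|S_w\|_\infty^2\|S_w\|_2^2$ together with $N^{0.7805}$ gives only $N^{2.561}$. The gain must therefore come from the high range.

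\textbf{Main obstacle.} The hardest step is the type II range with $M'$ near $N^{1/2}$ and large $Q$. There the character counting must save beyond what the pointwise bound gives, and the conductor factors coming from non-primitive characters must be tracked carefully. I would first attempt the balanced case $Q\approx Y$ using the hybrid large sieve, and fall back on \cite[Proposition 10.2]{CGL} only where the large sieve is insufficient. Finally, nothing in either argument uses the parity of $N$, so both bounds hold for every large $N$, as the statement asserts.
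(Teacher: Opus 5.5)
Your argument for \eqref{eq:lowprime} has a genuine gap. Lemma~\ref{lem:MPR} gives $X^{o(1)}\bigl(X/\sqrt{B_X}+X^{19/24}\bigr)$, and the second term does not decrease as $B_X$ grows. At $B_N\approx N^{0.439}$ the first term is indeed $N^{0.7805}$, but the floor $N^{19/24}=N^{0.7916\ldots}$ dominates. So that theorem never gives better than $N^{19/24+o(1)}$ on the minor arcs.

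This loss is fatal for the application. In Lemma~\ref{lem:badset} the lower range contributes $\meas(E_N)\sup_{\m_{\rm lo}}|S_w|^2$. With $19/24$ you would need $\delta>7/12$, whereas $\delta_{4,e}=0.56217$ and $\delta_{4,o}=0.5637$. This is exactly why the Maynard--Pandey--Radziwi\l\l{} bound suffices for $u=1,2,3$, where $\delta=11667/20000>7/12$, but not for $u=4$.

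The missing ingredient is Kumchev's estimate, Lemma~\ref{lem:Kum}. Its secondary term $\sqrt{B_x}\,x^{11/20}$ is small when $B_x$ is small. For $N^{0.439}<B_N\le N^{0.46}$ it gives $N^{1-0.2195}+N^{0.23+0.55}\le N^{0.7805+o(1)}$, and the cut at $N^{0.46}$ is chosen for exactly this. Since that estimate concerns dyadic prime sums, you must also bound the prefixes $\sum_{p\le Z}e(p\alpha)$ for $\tau N\le Z\le N$. This includes the small dyadic scales $X<q^2$, where $a_0/q$ is no longer an admissible approximation. The paper re-approximates there by $a'/q'$ with $q'\le\sqrt X$. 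Distinctness of the two reduced fractions gives $B'_X\ge X/(2B_N)$, hence bounds $N^{0.69}$ and $N^{0.736}$ on those scales. Only then does it integrate by parts against $g(t)=(\log t)w(t/N)$.

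For \eqref{eq:fourth} your outline follows the paper's route: Heath-Brown identity, Mellin inversion, Gauss-sum character expansion, the energy reduction of Chen--Gupta--Li's Lemma~9.2, then Proposition~\ref{prop:CGL}. However, it stops before the part that produces $2.5401$, and one parameter is off. The kernel $\int w(y)e(\beta Ny)y^{it}\,dy/y$ concentrates at $|t|\asymp N|\beta|$, not $qN|\beta|$. The hybrid parameter is therefore $r\mathcal T\le QT\ll\sqrt N$. With your range it could be $Q^2T$, of order $N$, and the length hypothesis $(r\mathcal T)^{2/3}/2\le P$ of Proposition~\ref{prop:CGL} would then fail for the factor groups.

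The exponent is also not an appendix LP certificate. Lemma~\ref{an:budget} shows $\xi_{\rm eff}\le1+\varepsilon_*+o(1)$ outside the two-group B case, so each block contributes $N^{1-b_0+2\xi_{\rm eff}}\le N^{3-b_0+2\varepsilon_*+o(1)}\le N^{2.54+2\varepsilon_*+o(1)}$, because $b_0\ge0.46$. The B case gives $3-\gamma-b_0/8<2.54$, using $\gamma\ge5/12$.

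A bare type I/II split with the mean value theorem does not reach this. You need the Maynard--Pandey--Radziwi\l\l{} combinatorial grouping: either two groups with $5/12\le\gamma\le1/2$, or three groups with $\gamma_1,\gamma_2\ge7/18$. For long smooth factors you also need pole subtraction together with sampled fourth moments of the remainders \eqref{eq:069}, with the pole part bounded separately by \eqref{eq:070}.

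Finally, your sanity check uses $N^{0.7805}$ on $\m_{\rm hi}$, where no such pointwise bound is available. The correct comparison is the exponent $31/12$ noted before Proposition~\ref{prop:high4}.
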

\begin{lemma}\label{lem:badset}
Let $E_N\subseteq\T$ be measurable and suppose that $\meas E_N\ll N^{-\delta+o(1)}$. Then
\begin{equation}\label{eq:Etrans}
 \int_{\m\cap E_N}|S_w|^2\dd\alpha
 \ll N^{1.561-\delta+o(1)}+N^{(2.5401-\delta)/2+o(1)}.
\end{equation}
In particular, this integral is $o(N)$ if $\delta>0.561$.
\end{lemma}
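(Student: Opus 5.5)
Split $\m\cap E_N$ into its parts in $\m_{\rm lo}$ and $\m_{\rm hi}$, and bound the two pieces separately. The two pieces give the two terms on the right of~\eqref{eq:Etrans}.

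On $\m_{\rm lo}\cap E_N$ we use the pointwise bound~\eqref{eq:lowprime} of Lemma~\ref{lem:primein} together with the measure hypothesis:
\[
 \int_{\m_{\rm lo}\cap E_N}|S_w|^2\dd\alpha
 \le \meas(E_N)\,\sup_{\m_{\rm lo}}|S_w|^2
 \ll N^{-\delta+o(1)}\,N^{2\cdot0.7805+o(1)}=N^{1.561-\delta+o(1)}.
\]

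On $\m_{\rm hi}\cap E_N$ we apply Cauchy--Schwarz, pairing $|S_w|^2$ with the indicator of $E_N$:
\[
 \int_{\m_{\rm hi}\cap E_N}|S_w|^2\dd\alpha
 \le (\meas E_N)^{1/2}\Bigl(\int_{\m_{\rm hi}}|S_w|^4\dd\alpha\Bigr)^{1/2}.
\]
By~\eqref{eq:fourth} this is at most $N^{-\delta/2+o(1)}N^{2.5401/2+o(1)}=N^{(2.5401-\delta)/2+o(1)}$.

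Adding the two pieces gives~\eqref{eq:Etrans}. We need the measurability of $\m_{\rm lo}$ and $\m_{\rm hi}$, which was noted where the Dirichlet approximation was selected. We also need that $\m$ is the disjoint union of these two sets, which holds by definition.

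For the final claim, compare both exponents with $1$. The first exponent is $1.561-\delta<1$ exactly when $\delta>0.561$. The second is $(2.5401-\delta)/2<1$ exactly when $\delta>0.5401$, which is weaker. So for $\delta>0.561$ fix $\varepsilon>0$ smaller than $\delta-0.561$. Both terms are then $\ll N^{1-\varepsilon'}$ for some $\varepsilon'>0$, which is $o(N)$.

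There is no real obstacle here once Lemma~\ref{lem:primein} is available. The only point needing care is that the $o(1)$ exponents combine harmlessly, since all such losses are $N^{\varepsilon}$ for arbitrary fixed $\varepsilon$.
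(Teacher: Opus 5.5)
Your proposal is correct and follows essentially the paper's own proof. You split $\m\cap E_N$ into $\m_{\rm lo}$ and $\m_{\rm hi}$, multiply $\meas E_N$ by the square of the pointwise bound \eqref{eq:lowprime} on the lower range, and apply Cauchy's inequality with the fourth moment \eqref{eq:fourth} on the upper range, just as the paper does; your remark that $\delta>0.5401$ alone would control only the upper range also matches the paper.
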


\section{Proof of the main results}\label{sec:mainpf}
\subsection{The cases \texorpdfstring{$u=1,2,3$}{u=1,2,3}}
\begin{proof}[Proof of Theorem~\ref{thm:main} for $u=1,2,3$]
Let $\nu$ be the parity of $N$ and consider
\[
 \mathcal R_u(N)=\int_\T S(\alpha)^2W_{u,\nu}(\alpha)e(-N\alpha)\dd\alpha.
\]
Fourier orthogonality makes this a nonnegative weighted count of representations with exactly $u$ Fibonacci terms and $5-u$ Lucas terms. The normalization of each recurrence sum and every coefficient of $P_\nu$ are positive.

On $\m\setminus\mathcal E_\nu$, we have $|P_\nu|\le\lambda$. Lemma~\ref{lem:prime} and Cauchy's inequality give an upper bound $(\lambda E_u+o(1))N$ for the absolute contribution of this set, where
\[
 E_u=\begin{cases}
 E_D,&u=1,\\
 5.764517,&u=2,\\
 E_B,&u=3.
 \end{cases}
\]
For $u=2$, use $(5.764517)^2>E_BE_D$. The major-arc integral is real, since its domain is invariant under $\alpha\mapsto-\alpha$.

On $\m\cap\mathcal E_\nu$, use $|W_{u,\nu}|\le1$. Lemmas~\ref{lem:prime} and~\ref{lem:large} give
\[
 \left|\int_{\m\cap\mathcal E_\nu}S^2W_{u,\nu}e(-N\alpha)\dd\alpha\right|
 \ll N^{19/12-11667/20000+o(1)}=N^{1-1/60000+o(1)}=o(N).
\]
The major-arc lower bound of Lemma~\ref{lem:major} exceeds the ordinary minor-arc upper bound in every case:
\begin{center}
\begin{tabular}{crr}
\toprule
$u$ & $\lambda E_u$ & $1.99426-\lambda E_u$\\\midrule
1&1.9942347214904&0.0000252785096\\
2&1.9710036526400&0.0232563473600\\
3&1.9480426988800&0.0462173011200\\\bottomrule
\end{tabular}
\end{center}
All entries are exact terminating decimals. Thus $\mathcal R_u(N)\ge c_uN>0$ for sufficiently large $N$, which proves the three assertions.
\end{proof}

\subsection{Four Fibonacci terms and one Lucas term}
\begin{proof}[Proof of Theorem~\ref{thm:main} for $u=4$]
Choose $\nu$ according to the parity of $N$. In $W_{4,e}=ab^3d$, four of the five recurrence terms are odd, so their sum is even. In $W_{4,o}=b^4d$, all five are odd. Consequently $N-R$ is even in both cases. By~\eqref{eq:Rsize}, it lies in $[N/2,N]$.

The integral
\[
 \mathcal R_{4,w}(N)=\int_\T S_w(\alpha)^2W_{4,\nu}(\alpha)e(-N\alpha)\dd\alpha
\]
is a nonnegative weighted representation count. Lemmas~\ref{lem:fourdat} and~\ref{lem:smooth} give a major-arc contribution of at least
\[
 (M_{4,\nu}-O(\sqrt\tau)-o(1))N.
\]
On $\m\setminus\mathcal E_{4,\nu}$, both weights satisfy $|W_{4,\nu}|\le\lambda_4|b|^2$. Lemmas~\ref{lem:prime} and~\ref{lem:smooth} bound the absolute contribution by
\[
 0.3499(5.697364+O(\sqrt\tau)+o(1))N.
\]
On the exceptional minor arcs, $|W_{4,\nu}|\le1$. Lemmas~\ref{lem:fourdat} and~\ref{lem:badset} give
\[
 \begin{aligned}
 \int_{\m\cap\mathcal E_{4,e}}|S_w|^2\dd\alpha
 &\ll N^{0.99883+o(1)}+N^{0.988965+o(1)}=o(N),\\
 \int_{\m\cap\mathcal E_{4,o}}|S_w|^2\dd\alpha
 &\ll N^{0.9973+o(1)}+N^{0.9882+o(1)}=o(N).
 \end{aligned}
\]
Finally,
\begin{equation}\label{eq:margins}
 \begin{aligned}
 1.99405-0.3499(5.697364)&=0.0005423364,\\
 1.9939-0.3499(5.697364)&=0.0003923364.
 \end{aligned}
\end{equation}
Choose $\tau$ so that the smoothing losses in both cases are less than half the smaller margin. Then fix $w$ and take $N$ sufficiently large. We obtain $\mathcal R_{4,w}(N)\ge c_4N>0$. Positivity gives an actual representation. Taking the maximum of the finitely many thresholds completes Theorem~\ref{thm:main}.
\end{proof}

\subsection{Counting representations}
\begin{proof}[Proof of Corollary~\ref{cor:count}]
Each proof above gives a normalized weighted count bounded below by a positive constant times $N$. An ordered recurrence-index tuple in a fixed monomial has weight at most $C J^{-5}$, because each subsequence has $J/3$ or $2J/3$ indices. The number of monomials is fixed, so the same bound applies to the sum of their weights. The two prime weights have product at most $(\log N)^2$; the smooth factors, when present, lie between zero and one. Thus the number of ordered representations is at least
\[
 c\frac{NJ^5}{(\log N)^2}\gg N(\log N)^3.
\]
For $u=4$, the recurrence normalizations are $M(2M)^4$ for even targets and $(2M)^5$ for odd targets. Both are of order $J^5$. Repetitions introduce no factorial, since the positions and indices are ordered. The minimum of the finitely many positive constants gives one $c$ for all four compositions.
\end{proof}

\section{Proofs of the estimates}\label{sec:proofs}
\subsection{Singular series and recurrence-difference averages}\label{sec:analytic}
For every nonzero integer $h$, define the singular series for prime pairs by
\begin{equation}\label{eq:singular}
 \Sg(h)=2C_2\ind_{2\mid h}\prod_{\substack{p\mid |h|\\p>2}}\frac{p-1}{p-2},
 \qquad C_2=\prod_{p>2}\left(1-\frac1{(p-1)^2}\right).
\end{equation}
Products indexed by $p$ are over primes. We do not define $\Sg(0)$; zero shifts are always treated separately. For odd squarefree $r$, write
\[
 \omega(r)=\prod_{p\mid r}\frac1{p-2},\qquad\omega(1)=1.
\]
Then
\begin{equation}\label{eq:positive}
 \Sg(h)=2C_2\ind_{2\mid h}\sum_{\substack{r\mid h\\r\text{ odd squarefree}}}\omega(r).
\end{equation}
Let $\pi(r)$ be the least positive period of the state $(F_n,F_{n+1})$ modulo $r$, with $\pi(1)=1$. For a prime $p$, let $z(p)$ be the least positive index $h$ with $p\mid F_h$, called the rank of apparition. The period and rank facts used below are quoted in Appendix~\ref{app:fib}. For $U=b$ or $d$, denote its distinct odd recurrence values by $u_i$ and put $K=2M=2J/3$ and
\[
 \mathfrak A_{U,J}=K^{-2}\sum_{i\ne j}\Sg(u_i-u_j).
\]
The two arithmetic estimates needed below are
\begin{equation}\label{eq:coll}
 \limsup_J\mathfrak A_{b,J}<2.1985,
 \qquad \lim_J\mathfrak A_{d,J}<2.255.
\end{equation}
The limits are along multiples of three. The first is the cited result in Appendix~\ref{app:fib}; Appendix~\ref{app:lucas} proves the second, including all omitted moduli. We use their boundedness here before establishing any major-arc lower bound.

\subsection{Exceptional weights for general growing moduli}
The odd squarefree case is Sun's exceptional-modulus lemma
\cite[Lemma 5.1]{Sun}. The following extension also covers even moduli and
unbounded prime powers. We give the full proof because the squarefree kernel
of a growing conductor need not itself grow.

\begin{lemma}\label{lem:exc}
Let $J\asymp\log N$, let $u_n$ be either $F_n$ or $L_n$, and restrict
the indices to a fixed periodic subfamily of $1\le n\le J+O(1)$.
Let $C_0$ be a fixed nonzero integer. For every integer modulus
$q=q(N)$ tending to infinity,
\begin{equation*}
 \sum_{\substack{n\text{ in the subfamily}\\m\ne u_n\\q\mid C_0(m-u_n)}}
 \mathfrak S(m-u_n)=o(J),
\end{equation*}
uniformly for all integers $m$ with $|m|\le2N$. The modulus $q$ need
not be odd or squarefree.
\end{lemma}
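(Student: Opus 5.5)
The plan is to expand $\mathfrak S$ by the positive divisor sum \eqref{eq:positive}, so the left side is bounded by
\[
 2C_2\sum_{r\text{ odd squarefree}}\omega(r)\,\#\{n:\ m\ne u_n,\ r\mid m-u_n,\ q\mid C_0(m-u_n)\}.
\]
The condition $q\mid C_0(m-u_n)$ implies $q'\mid m-u_n$ with $q'=q/(q,C_0)\to\infty$. Both divisibilities together say $\lcm(r,q')\mid m-u_n$. We then split by the size of $r$. For $r\le R_0$, with $R_0$ a slowly growing parameter, we count indices $n$ with $u_n\equiv m$ modulo $Q=\lcm(r,q')$. For $r>R_0$, we count trivially using only $r\mid m-u_n$.

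\textbf{Large $r$.} For $r>R_0$, drop the condition on $q$. We need the tail $\sum_{r>R_0}\omega(r)\#\{n: r\mid m-u_n,\ m\ne u_n\}$ to be $o(J)$ uniformly in $m$. Since $|m-u_n|\le 3N$ and $m-u_n\neq0$, only divisors $r$ of a nonzero integer of size $\ll N$ occur. A pair $(n,r)$ with $r\mid m-u_n$ forces $u_n$ into one residue class mod $r$. The number of such $n$ in $[1,J]$ is at most about $\#\{\text{solutions per period}\}\cdot(J/\pi(r)+1)$. A second-order recurrence takes each residue at most a bounded number of times per period for prime $r$, with multiplicativity via CRT in general. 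By the period facts of Appendix~\ref{app:fib}, $\pi(r)\gg\log r$, and in fact $\pi(p)\ge z(p)\gg\log p$.

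This is exactly the structure of Sun's Lemma 5.1 \cite[Lemma 5.1]{Sun} and of the averages in \eqref{eq:coll}. The mean estimate there shows $\sum_r\omega(r)\,(\text{count})/J$ converges. I would reuse it as a black box: the weighted count $\sum_r\omega(r)\,\#\{n:r\mid m-u_n\}\ll J$ uniformly, with tail over $r>R_0$ that is $o(J)$ as $R_0\to\infty$. The uniformity in $m$ comes from the bound $\#\{n\le J:u_n\equiv m\ (r)\}\ll \tau_0(r)(J/\pi(r)+1)$, where $\tau_0(r)$ is a divisor-type constant of size $4^{\omega(r)}$, combined with $\sum_r\omega(r)4^{\omega(r)}/\pi(r)<\infty$. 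Terms with $\pi(r)>J$ contribute $\sum_{r\mid m-u_n}\omega(r)$, which is bounded by $\mathfrak S(m-u_n)\ll\log\log N$. There are at most $\ll J/\log J$ such relevant $n$ if we group by largest prime factor. This is where I expect the main work.

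\textbf{Small $r$ with growing $Q$.} For fixed $r\le R_0$, the modulus $Q=\lcm(r,q')\ge q'\to\infty$. Write $Q=\prod p^{e_p}$. Either $Q$ has a prime power $p^{e}$ with $p^e\to\infty$ along the sequence of moduli, or the number of distinct primes grows. In both cases the period $\pi(Q)$ tends to infinity. For odd $p$, Wall's theory gives $\pi(p^e)\ge p^{e-1}\pi(p)$ under the usual Wall hypothesis, and at least $\gg e$ in general since $\pi(p^{e})$ is a multiple of $\pi(p^{e_0})$ with growth $p$-adically. For $p=2$, $\pi(2^e)=3\cdot2^{e-1}$. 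This is exactly where growing non-squarefree and even moduli are handled, which is the novelty over Sun.

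The number of solutions of $u_n\equiv m\ (\mathrm{mod}\ p^e)$ within one period is bounded independently of $e$. This holds because the Binet form makes $u_n\equiv m$ a quadratic condition in $\phi^n$ in a quadratic extension of $\mathbb Z_p$, with at most $O(1)$ solution classes, including the ramified case $p=5$ and $p=2$ handled by Hensel with a bounded loss. Hence $\#\{n\le J:Q\mid m-u_n\}\ll_r J/\pi(Q)+O_r(1)$. There is a caveat when $\pi(Q)>J$: then the count is $O_r(1)=o(J)$ anyway. Summing over $r\le R_0$ with weights $\omega(r)$ gives $o(J)$ once $R_0$ grows slowly enough relative to $\pi(Q)$ and to $J$.

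\textbf{Combining.} Choose $R_0\to\infty$ slowly and combine the two ranges to conclude. The main obstacle is the uniform (in $m$ and in the exponent $e$) bounded-solutions count modulo prime powers, especially $p=2$ and $p=5$. I would first try the $p$-adic Binet argument there and fall back to direct inspection of $F_n,L_n$ modulo $2^e$ and $5^e$ using the known periods.
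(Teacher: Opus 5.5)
Your overall structure is sound. The positive expansion of $\mathfrak S$, the passage to $q'=q/(q,C_0)$, and the split of the odd squarefree $r$ at a slowly growing $R_0$ amount to the paper's bound $J\min\{p^{-b},T(d)^{-1}\}+1$ followed by dominated convergence. Black-boxing Sun's estimates for the large-$r$ tail is also what the paper does.

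\textbf{The gap.} It lies in the step you call the main obstacle, which is exactly the new content of the lemma. You claim that $u_n\equiv m\pmod{p^e}$ has $O(1)$ solutions per period, uniformly in $e$ and $m$, so that $\#\{n\le J:Q\mid m-u_n\}\ll J/\pi(Q)+O(1)$. This is false. Take $m=F_{n_0}$ and $n\equiv n_0\pmod 2$. The congruence is then a quadratic in $x=\phi^n$ with roots $\phi^{n_0}$ and $-\psi^{n_0}$, which differ by $L_{n_0}$. When $L_{n_0}$ is highly divisible by $p$, the quadratic has many roots modulo $p^e$, and Hensel's lemma gives no bounded count.

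Concretely, take $p=3$, $n_0=2\cdot3^j$ and $u=4\cdot3^{j+1}k$. The identity $F_{n_0+2u}-F_{n_0}=F_uL_{n_0+u}$ holds for even $u$, and here $v_3(F_u)\ge j+2$ and $v_3(L_{n_0+u})=j+1$. Hence $3^{2j+3}\mid F_n-F_{n_0}$ for every $n\equiv n_0\pmod{8\cdot3^{j+1}}$. So modulo $3^e$, with $e=2j+3$, there are about $J\,3^{-e/2}$ solutions in $[1,J]$. Your bound gives only $J/\pi(3^e)+O(1)\approx J\,3^{-e}+O(1)$. With $3^j\approx J^{1/3}$, the true count is about $J^{2/3}$ against your $O(1)$.

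\textbf{What is true.} The correct count saves only the square root of the modulus, and this is what the paper proves. Lengyel's bounds $v_p(F_h),v_p(L_h)\le c_p+v_p(h)$ combine with
\[
F_{v+u}-F_{v-u}\in\{F_uL_v,\,L_uF_v\},\qquad L_{v+u}-L_{v-u}\in\{5F_uF_v,\,L_uL_v\}
\]
to give $v_p(u_n-u_\ell)\le c'_p+v_p(n-\ell)+v_p(n+\ell)$ for indices of equal parity. So in each parity class every solution satisfies $n\equiv\pm n_0\pmod{p^b}$ with $b=\lceil(a-c'_p)/2\rceil$. The count is therefore $O_p(Jp^{-b}+1)$, which is $o(J)$ as $a\to\infty$.

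\textbf{The reduction you also need.} This must be the only remaining case. If, along a subsequence, the product of the distinct odd primes dividing $q'$ is unbounded, use the single prime divisor of largest period. That gives $\ll J/T+1$ with $T\to\infty$, because only finitely many primes have bounded period. Otherwise, along subsequences, the prime support of $q'$ is a fixed finite set and some $p^a\mid q'$ has $a\to\infty$. With this count, your $R_0$-split closes.

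\textbf{A smaller point.} For the endpoint terms in the large-$r$ range, do not bound $\mathfrak S(m-u_n)$ index by index. Even with your unproved count of $J/\log J$ relevant indices, that gives only $O(J)$. Instead, sum $\omega(d)$ once over odd squarefree $d\mid H=\prod_n|m-u_n|$. This sum is $O((\log\log 3H)^2)=O((\log J)^2)$, uniformly in $m$.
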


\begin{proof}
Replace $q$ by $q/\gcd(q,|C_0|)$, which still tends to infinity. The
condition then becomes $q\mid m-u_n$. Put
$q_0=\prod_{p\mid q,\ p>2}p$, the product of the distinct odd prime divisors.
If $q_0\to\infty$, Sun's Lemma~5.1 applies directly to $q_0$ and to the full
Fibonacci or Lucas family. A restriction to a periodic subfamily only
decreases this nonnegative sum. Changing the Fibonacci index from $n+1$ to
$n$ changes at most a bounded number of terms, each of size $O(\log\log N)=o(J)$.

We need the following estimates from the proof of that lemma for the
remaining case. For odd squarefree $d$, let
$T(d)=\max_{p\mid d}\pi(p)$, with $T(1)=1$. By the occurrence bound in
\cite[Lemma 4.1]{Sun}, and \cite[(5.2) and the proof of Lemma 5.1]{Sun},
\begin{align}
 \#\{1\le n\le J+O(1):d\mid m-u_n\}
 &\ll J/T(d)+1,\label{eq:030}\\
 \sum_{d\text{ odd squarefree}}\frac{\omega(d)}{T(d)}&<\infty.\label{eq:031}
\end{align}
The fixed prime $5$ affects only the implied constant. For
$H=\prod_{1\le n\le J+O(1),\ m\ne u_n}|m-u_n|$, the same proof gives
\begin{equation}\label{eq:033}
 \sum_{\substack{d\mid H\\d\text{ odd squarefree}}}\omega(d)
 \ll (\log\log(3H))^2\ll(\log J)^2=o(J),
\end{equation}
since $\log H=O(J\log N)=O(J^2)$. These estimates are uniform for
$|m|\le2N$, and remain valid after restricting the index set.

It remains to consider a subsequence on which $q_0$ is bounded. On a further
subsequence, all prime divisors of $q$ belong to a fixed finite set, and
$p^a\mid q$ for one fixed prime $p$ with $a\to\infty$.
For a nonzero integer $h$, let $v_p(h)$ denote the largest integer $v\ge0$
for which $p^v\mid h$; this is the $p$-adic valuation.
Lengyel's valuation formulas~\cite{Lengyel}, recorded in
\cite[Theorem 1.4]{MR}, imply $v_p(F_h)\le c_p+v_p(h)$ for $h\ge1$.
Since $F_{2h}=F_hL_h$, the same bound holds for $L_h$ after increasing the
constant by $v_p(2)$. Thus for a constant $c_p$ depending only on $p$,
\begin{equation}\label{me:valuation}
 v_p(F_h)\le c_p+v_p(h),\qquad v_p(L_h)\le c_p+v_p(h)\quad(h\ge1).
\end{equation}

For distinct positive indices of the same parity, interchange the
indices if necessary so that $n>\ell$, and write
$u=(n-\ell)/2$, $v=(n+\ell)/2$. Binet's formula gives
\[
 \begin{aligned}
 F_{v+u}-F_{v-u}
 &=\begin{cases}
 F_uL_v,&u\text{ even},\\
 L_uF_v,&u\text{ odd},
 \end{cases}\\
 L_{v+u}-L_{v-u}
 &=\begin{cases}
 5F_uF_v,&u\text{ even},\\
 L_uL_v,&u\text{ odd}.
 \end{cases}
 \end{aligned}
\]
By \eqref{me:valuation}, both recurrence sequences satisfy
\[
 v_p(u_n-u_\ell)\le c'_p+v_p(n-\ell)+v_p(n+\ell).
\]
In each parity class of indices, choose a solution $n_0$ if one exists.
Every other solution of $u_n\equiv m\pmod{p^a}$ must satisfy
\[
 \begin{gathered}
 n\equiv n_0\text{ or }-n_0\pmod{p^b},\\
 b=\max\{0,\lceil(a-c'_p)/2\rceil\}.
 \end{gathered}
\]
Thus the number of solutions is $O_p(Jp^{-b}+1)$, uniformly in $m$.
Taking the smaller of this estimate and \eqref{eq:030}, we obtain
\[
 \begin{split}
 \#\{n:p^a\mid m-u_n,\ d\mid m-u_n\}
 \ll_p J\min\{p^{-b},T(d)^{-1}\}+1.
 \end{split}
\]
Apply this estimate to the positive expansion~\eqref{eq:positive}. Equation
\eqref{eq:031} and dominated convergence make the main
contribution $o(J)$, and \eqref{eq:033} gives $o(J)$
for the endpoint contribution as well. The two subsequence cases
cover every sequence $q(N)\to\infty$, completing the proof.
\end{proof}

\subsection{Prime differences and transfer between prime intervals}
\begin{lemma}[Johnston--Trudgian {\cite[Proposition 2]{JT}}]\label{lem:JT}
For every fixed $\varepsilon>0$ and all sufficiently large $X$,
\[
 \#\{(p,p'):p,p'\le X,\ p-p'=h\}
 \le(6.7814+\varepsilon)\frac{C_2X}{(\log X)^2}
 \prod_{\substack{\ell\mid h\\\ell>2}}\frac{\ell-1}{\ell-2},
\]
uniformly for positive even $h<X$. The product is over prime divisors of $h$.
\end{lemma}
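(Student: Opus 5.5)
The statement is a direct citation of Johnston--Trudgian \cite[Proposition 2]{JT}. The plan is therefore to check that their hypotheses and normalization match ours, not to reprove it. I would still outline the derivation so the dependence on Lichtman's work \cite{Lichtman} is visible, and then confirm the two points the later argument relies on: the bound is uniform in positive even $h<X$, and the arithmetic factor is exactly $\prod_{\ell\mid h,\ \ell>2}(\ell-1)/(\ell-2)$. This factor equals $\Sg(h)/(2C_2)$ by~\eqref{eq:singular}, which is the normalization used in Section~\ref{sec:analytic}.

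The sketched derivation runs as follows.
\begin{enumerate}
\item Count the pairs by $\#\{p'\le X-h: p'+h\text{ prime}\}$.
\item Bound this above by the number of primes $p'\le X$ for which $p'+h$ has no prime factor below $z=X^{\vartheta/2-\varepsilon}$, up to an admissible error of $O(z)$.
\item Sift the sequence $\{p'+h\}$ with the upper-bound linear (Rosser--Iwaniec) sieve. The local densities are $g(\ell)=1/(\ell-1)$ for $\ell\nmid h$ and $g(\ell)=0$ for $\ell\mid h$. This yields the main term $\frac{X}{\log X}\prod_{\ell<z}(1-g(\ell))$, and Mertens' theorem turns it into $\frac{2e^{-\gamma}\cdot e^{\gamma}C_2X}{(\log X)\log z}\prod_{\ell\mid h,\ \ell>2}\frac{\ell-1}{\ell-2}$, times the linear-sieve function $F(s)$.
\item Control the remainders in arithmetic progressions of the primes through a Bombieri--Vinogradov type theorem for well-factorable weights. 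Lichtman's contribution is to raise the admissible level $\vartheta$ beyond $1/2$ for the linear upper-bound sieve weights, via Maynard's equidistribution estimates. Combined with the optimal choice of $s$, this gives the numerical constant $6.7814$.
\end{enumerate}

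Uniformity in $h<X$ comes from two facts. Every sieve input depends on $h$ only through the set of its prime divisors. The error terms are bounded by sums over moduli $d\le X^{\vartheta}$ that are uniform in the residue class $-h \bmod d$.

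The only step needing genuine care is matching constants. One must confirm that the form in \cite{JT} has $C_2$ defined as the twin-prime product over $p>2$ with no extra factor $2$, and that their count is over ordered pairs with $p,p'\le X$ rather than $p'\le X-h$. If either convention differs, I would absorb the discrepancy into $\varepsilon$ or into the product, noting that both choices give upper bounds. The numerical optimization behind $6.7814$ is taken as given, as stated in the introduction.
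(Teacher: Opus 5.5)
Your plan matches the paper, which gives no proof of this lemma. The paper quotes it from \cite[Proposition 2]{JT} and uses it only to obtain \eqref{eq:pair}, so a citation plus a check of conventions is the whole argument. However, two of your supporting remarks are wrong and should not be relied on.

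First, uniformity in $h<X$ does not follow from the mechanism in your sketch. Bombieri--Vinogradov is uniform in the residue class, but it gives only level $1/2$, and hence the constant $8$. The equidistribution estimates beyond level $1/2$ for well-factorable weights (Bombieri--Friedlander--Iwaniec, Maynard, and Lichtman's use of them) are proved for a fixed residue class, with implied constants depending on it. They give nothing uniform for the class $-h \bmod d$ when $h$ is comparable to $X$. The paper needs exactly this range, since the shifts in \eqref{eq:fullnorm} are as large as $N^{\theta}$ with $X=N$. So the uniformity must be taken from the cited statement itself, not from your heuristic.

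Second, a mismatch between $C_2$ and $2C_2$ could not be absorbed into $\varepsilon$. It is a factor $2$, whereas the passage to \eqref{eq:pair} uses $(6.7814+\varepsilon)C_2\le3.391\cdot2C_2$ and so tolerates only $\varepsilon<0.0006$. Confirming that the constant multiplies $C_2=\prod_{p>2}(1-(p-1)^{-2})$ is therefore a real check. The ordered-pair versus $p'\le X-h$ convention is harmless, as you say.

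Two minor points. In step 3 the factor $e^{\gamma}$ is counted twice. Also, for $s\le3$ the linear sieve gives $4/\vartheta$ independently of $s$, so $6.7814$ is not produced by optimizing $s$. Since you take the constant from the source, neither affects the argument.
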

This counts ordered prime pairs.
Their logarithmic weight is at most $(\log X)^2$, whereas~\eqref{eq:singular} includes the factor $2C_2$.
Taking $0<\varepsilon<0.0006$ gives
\begin{equation}\label{eq:pair}
 \sum_{\substack{p,p'\le X\\p-p'=h}}\log p\log p'
 \le3.391X\Sg(h)\qquad(0<|h|<X,\ 2\mid h).
\end{equation}
Negative shifts are handled by interchanging the primes. For $|h|\ge X$, the sum is zero. At $h=0$, we use the prime number theorem, not the singular series.

Let $S_\delta(\alpha)=\sum_{N^{1-\delta}<p\le N}(\log p)e(p\alpha)$, where $0<\delta\le0.01$ is fixed.
For $U=b$ or $d$, Parseval's identity,~\eqref{eq:pair} and the prime number theorem give
\begin{equation}\label{eq:fullnorm}
 \int_\T|S_\delta U|^2\dd\alpha
 \le\frac{N\log N}{K}+3.391N\mathfrak A_{U,J}+o(N)=O(N),
 \qquad K=2J/3.
\end{equation}
The diagonal contributes $(1+o(1))N\log N/K$. Every off-diagonal shift is nonzero and even. In particular, $\|SU\|_2=O(\sqrt N)$ without a major-arc lower bound.

Put $D_\delta=S-S_\delta$ and $X=N^{1-\delta}=o(N)$. The coefficients of $D_\delta$ are nonnegative, supported on primes at most $X$, and bounded by $\log p$. The same expansion has diagonal contribution $O(X\log X/K)=O(X)$; the other terms are controlled by~\eqref{eq:pair} and~\eqref{eq:coll}. Thus
\begin{equation}\label{eq:transfer}
 \|D_\delta U\|_2^2=O(X)=o(N).
\end{equation}
For any measurable $G\subseteq\T$, Cauchy's inequality yields
\[
 \left|\int_G(|SU|^2-|S_\delta U|^2)\dd\alpha\right|
 \le\|D_\delta U\|_2\bigl(2\|S_\delta U\|_2+\|D_\delta U\|_2\bigr)=o(N).
\]
The recurrence weights satisfy
\[
 |W_{1,\nu}|\le|d|^2,\qquad
 |W_{2,\nu}|\le|bd|\le\tfrac12(|b|^2+|d|^2),\qquad
 |W_{3,\nu}|,|W_{4,\nu}|\le|b|^2.
\]
The same argument gives
\begin{equation}\label{eq:transW}
 \left|\int_G(S^2-S_\delta^2)W_{u,\nu}e(-N\alpha)\dd\alpha\right|=o(N).
\end{equation}
We can therefore apply the two explicit formulas on their permitted prime intervals and transfer the result to $S$. Their lower-cutoff parameters need not coincide, and need not equal $0.01$.

\subsection{A common enlarged major-arc formula}
A Dirichlet character modulo $q$ is a homomorphism from the group of residue
classes coprime to $q$ to the unit circle, extended by zero on other classes.
It is primitive if it is not induced from a proper divisor of $q$; the
modulus of its inducing primitive character is its conductor,
$\operatorname{cond}\chi$. The principal character is one on classes coprime
to its modulus. Write $L(s,\chi)$ for its Dirichlet $L$-function, defined by
$\sum_{n\ge1}\chi(n)n^{-s}$ for $\operatorname{Re}s>1$ and then by analytic
continuation. The Riemann zeta function is $\zeta(s)=L(s,1)$ for the character
of conductor one, and $\Gamma$ denotes Euler's gamma function.
We use Pintz \cite[Theorem 1, Main Lemma 1, (1.21), Lemma 4.18]{Pintz}.
His explicit formula applies to the logarithmically weighted prime sum
over $X^{1-\delta}<p\le X$, where $\delta>0$ is sufficiently small
and fixed, and may depend on the fixed truncation margin and accuracy.
If $P_0\le X^{4/9-\varepsilon}$, there exists
$P\in[P_0X^{-\varepsilon},P_0]$ for which the formula on the same
major-arc set holds simultaneously for all the even targets $m$ needed
here. Take $X=N$, $P_0=N^{0.441}$, and $\varepsilon=0.002$.
Since $0.441<4/9-0.002$, the resulting truncation is $P_+$.
Choose a fixed $0<\delta\le0.01$ permitted by the original theorem,
and denote the corresponding prime sum by $S_\delta$.

Write
\[
 G_P(m)=\int_{\mathfrak M(P_+)}S_\delta(\alpha)^2e(-m\alpha)\dd\alpha.
\]
The term arising from the pole at $1$ in each factor is
$\mathfrak S(m)(m-2N^{1-\delta}+O(1))$. The other terms arise from zeros $\rho_i=\beta_i+i\gamma_i$ of the
relevant Dirichlet $L$-functions in a shrinking strip near the line
$\operatorname{Re}s=1$, satisfying
\[
 \beta_i\ge1-H/\log N,\qquad |\gamma_i|\le\sqrt N,
 \qquad r_i=\operatorname{cond}\chi_i\le P_+.
\]
For fixed $H$, their number, counted with multiplicity, is bounded
by a fixed constant $K_H$. Include the pole of $\zeta$ at $1$ in
the index set, with coefficient $1$, and give each zero coefficient
$-1$, counting multiplicities. Writing $A_i=1$ for the pole and $A_i=-1$ for each occurrence of a zero,
the supplementary terms have the form
\[
 A_iA_j\mathfrak S(\chi_i,\chi_j,m)
 \frac{\Gamma(\rho_i)\Gamma(\rho_j)}{\Gamma(\rho_i+\rho_j)}
 m^{\rho_i+\rho_j-1}.
\]
The error in the explicit formula is
\[
 O_\varepsilon\bigl(\mathfrak S(m)N e^{-c_\varepsilon H}\bigr)
 +O_\varepsilon(N^{1-\delta}).
\]
All targets under consideration are even and satisfy $N/2\le m\le N$.
Here $\mathfrak S(\chi_i,\chi_j,m)$ is Pintz's generalized singular series,
the character-dependent coefficient of the corresponding term in his
explicit formula. We use the bounds in the cited Main Lemma~1:

\begin{align}
 |\mathfrak S(\chi_i,\chi_j,m)|
 &\le\mathfrak S(m),\label{eq:039}\\
 |\mathfrak S(\chi_i,\chi_j,m)|
 &\ll\mathfrak S(m)U^{-1/2}(\log\log(3U))^2,
 \label{eq:040}
\end{align}
where
\[
 \begin{split}
 U=\max\bigg\{&
 \frac{r_i^2}{(r_i,r_j)^2},\frac{r_j^2}{(r_i,r_j)^2},
 \frac{r_i}{(r_i,m)},\\
 &\frac{r_j}{(r_j,m)},\operatorname{cond}(\chi_i\chi_j)\bigg\}.
 \end{split}
\]
Each $r_i$ here is the full primitive conductor, not its squarefree part.

Fix a small accuracy parameter $\epsilon>0$, and then take $H$
sufficiently large. For sufficiently large $N$, all the retained real
parts lie in $[3/4,1]$. We prove directly that
\begin{equation}\label{eq:041}
 \frac{\Gamma(\rho_i)\Gamma(\rho_j)}{\Gamma(\rho_i+\rho_j)}
 \ll(1+\max\{|\gamma_i|,|\gamma_j|\})^{-1/4}.
\end{equation}
Use the beta integral, and put
$Y=\max(1,|\gamma_i|,|\gamma_j|)$. The result is immediate for
bounded $Y$. If $Y\ge4$, remove an interval of length $Y^{-1/2}$
at each endpoint; their integrals are $O(Y^{-3/8})$.
On the remaining interval, the amplitude
$x^{\beta_i-1}(1-x)^{\beta_j-1}$ has supremum and total variation
$O(Y^{1/8})$. The phase is
$\gamma_i\log x+\gamma_j\log(1-x)$.
If the two imaginary parts have the same sign, the absolute value of
the second derivative is at least $Y$. If they have opposite signs,
the absolute value of the first derivative is at least $Y$, and at
most two intervals of monotonicity are required. The first- or
second-derivative estimate for oscillatory integrals, followed by
partial summation, proves \eqref{eq:041}.
Since $|m^{\rho_i+\rho_j-1}|\le N$, a fixed height $U_\gamma$
can be chosen so that all terms containing
$|\gamma_i|>U_\gamma$ or $|\gamma_j|>U_\gamma$ contribute at most
the required small multiple of $\epsilon\mathfrak S(m)N$.
Next, use \eqref{eq:040} to choose a fixed $U_0$ and
discard the terms with $U>U_0$ in the same way.
These estimates are uniform for $N/2\le m\le N$; the fixed bound
$K_H^2$ on the number of terms controls the total error.

For the remaining supplementary terms, put
$C_\epsilon=\operatorname{lcm}(1,\ldots,\lceil U_0\rceil)$.
The inequality $r_i/(r_i,m)\le U_0$ implies
$r_i\mid C_\epsilon m$, and the same holds for $r_j$.
Every term other than this leading term contains at least one zero.
For any fixed conductor bound, there are only finitely many relevant
characters. They have finitely many zeros in the fixed height range,
and none of these zeros has real part $1$. Thus they eventually leave
the shrinking strip $\beta\ge1-H/\log N$.
The minimum conductor of the remaining zeros therefore tends to
infinity, unless there are eventually no remaining zeros.
This argument includes the zeros of the principal character modulo $1$;
it does not require every zero to belong to a nonprincipal character.

Consequently, there is a family of full conductors $r_\nu$, with
bounded cardinality and minimum tending to infinity, such that
\begin{align}
 |G_P(m)|&\ll_\epsilon\mathfrak S(m)N,
 \label{eq:042}\\
 G_P(m)&=\mathfrak S(m)m+O(\epsilon\mathfrak S(m)N)
                 +o(\mathfrak S(m)N).
 \label{eq:043}
\end{align}
The second estimate holds for targets satisfying
$r_\nu\nmid C_\epsilon m$ for every $\nu$; the first holds for all
targets under consideration.

Expand a monomial of any $W_{u,\nu}$, where $\nu$ is the parity of $N$.
There is at least one Lucas factor. Fix the other four recurrence indices,
write $m_0$ for $N$ minus those four terms, and sum over that Lucas index.
By~\eqref{eq:Rsize}, we have $|m_0|\le2N$ and
$m_0-L_s=N-O(N^\theta)\in[N/2,N]$; zero shifts never occur.
Lemma~\ref{lem:exc}, applied to each growing conductor, shows that the
singular-series weight of its exceptional five-tuples is $o(J^5)$,
uniformly in the four fixed indices. The conductor family has bounded
cardinality. After division by the recurrence normalization, which is
of order $J^5$, equation~\eqref{eq:042} bounds the total
absolute contribution of exceptional tuples by $o(N)$.
This includes any negative exceptional terms.

Choose $\epsilon$ sufficiently small, then take $N$ sufficiently large.
Because $m=N-O(N^\theta)$, the generic term retains at least
$0.9999N\Sg(m)$. Consequently, if
$W_{u,\nu}(\alpha)=\sum_R w_R e(R\alpha)$, then
\begin{equation}\label{eq:avgmajor}
 \int_{\MA_+}S_\delta^2W_{u,\nu}e(-N\alpha)\dd\alpha
 \ge 0.9999N\sum_R w_R\Sg(N-R)-o(N).
\end{equation}
Here $w_R\ge0$ and the coefficients sum to one.
The same inequality holds with $S$ in place of $S_\delta$, by~\eqref{eq:transW}.
The finite convolutions in Section~\ref{sec:local} give the required
lower bounds for the average on the right. This argument uses the
parity of $N-R$, not the parity of $N$ itself. The choice of $P_+$
is common to all targets and all four compositions.

\subsection{An unconditional lower bound for the inner major-arc diagonal}

\begin{lemma}\label{me:diagonal}
Let $0<\delta_-\le0.01$ be fixed, put
$S_- =\sum_{N^{1-\delta_-}<p\le N}(\log p)e(p\alpha)$,
and suppose that $N^{0.399}\le P_-\le N^{0.4}$. Then
\begin{equation}\label{eq:046}
 \int_{\mathfrak M(P_-)}|S_-|^2\dd\alpha
 \ge(0.39-o(1))N\log N.
\end{equation}
\end{lemma}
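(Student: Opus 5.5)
The plan is to prove \eqref{eq:046} by a duality (Cauchy--Schwarz) argument against an explicit model function supported on the major arcs. This avoids any use of zeros of $L$-functions, so the bound is unconditional. For $\alpha=a_0/q+\beta$ on the arc about $a_0/q$, the expected shape of $S_-$ is $\frac{\mu(q)}{\varphi(q)}T(\beta)$, where $T(\beta)=\sum_{N^{1-\delta_-}<n\le N}e(n\beta)$. Fix a small $\eta>0$ with $0.399(1-\eta)>0.39$, and put $Q=P_-^{1-\eta}$. Define $G$ on $\MA(P_-)$ by
\[
 G(a_0/q+\beta)=\frac{\mu(q)}{\varphi(q)}\,T(\beta)\,\kappa_q(\beta)\quad(q\le Q),
\]
where $\kappa_q$ is a Fej\'er-type cutoff supported on $|\beta|\le P_-/(qN)$. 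Set $G=0$ on the arcs with $q>Q$. Since $P_-^2<N$, the arcs of $\MA(P_-)$ are disjoint. Then
\[
 \int_{\MA(P_-)}|S_-|^2\dd\alpha\ \ge\ \frac{\bigl|\int S_-\overline G\bigr|^2}{\int|G|^2}.
\]
It therefore suffices to show that both $\int S_-\overline G$ and $\int|G|^2$ equal $N\sum_{q\le Q}\mu(q)^2/\varphi(q)+O(N\log N\cdot P_-^{-\eta/2})+o(N\log N)$. Their ratio is then $(\log Q-O(1))N=(0.399(1-\eta)-o(1))N\log N$, which exceeds $0.39N\log N$.

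\textbf{The cross term.} Every prime $p>N^{1-\delta_-}$ exceeds $Q$, so the sum over reduced residues $a_0$ modulo $q$ is exactly the Ramanujan sum $c_q(p)=\mu(q)$. No equidistribution of primes in progressions is required, and this is the key simplification. Hence
\[
 \int S_-\overline G=\sum_{q\le Q}\frac{\mu(q)^2}{\varphi(q)}\sum_{p,n}(\log p)\,\hat\kappa_q(p-n),
\]
where $\hat\kappa_q$ is a nonnegative Fej\'er kernel of width $\asymp qN/P_-$, total mass one, and absolutely summable tails. Summing over $n$ first gives $1+O\bigl(\min(1,\,qN/(P_-\,\mathrm{dist}(p)))\bigr)$, where $\mathrm{dist}(p)$ is the distance from $p$ to the endpoints of $(N^{1-\delta_-},N]$. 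The prime number theorem then gives main term $N$ per $q$. The endpoint error per $q$ is $O(qN\log^2N/P_-)$. Summed against $1/\varphi(q)$ over $q\le Q$, it is $O(QN\log^2N/P_-)=O(N\log^2N\cdot P_-^{-\eta})=o(N)$.

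\textbf{The model norm.} The same computation with $T$ in place of $S_-$ gives $\int|G|^2$ with the identical main term and an identical error bound. Finally, the classical asymptotic $\sum_{q\le Q}\mu(q)^2/\varphi(q)=\log Q+O(1)$ gives the claimed ratio.

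The step I expect to be delicate is the control of the kernel tails at the endpoints. With a sharp cutoff $\ind_{|\beta|\le\Delta}$, the sine kernel is not absolutely summable. Its endpoint losses, summed over all $q\le P_-$, reach size $N\log^2N$, which overwhelms the main term. This is why I would use the Fej\'er cutoff $\kappa_q$ and truncate the moduli at $Q=P_-^{1-\eta}$. These two choices cost only the factor $(1-\eta)$ in $\log P_-$, which the margin between $0.399$ and $0.39$ absorbs.
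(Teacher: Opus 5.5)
Your proof is correct, but it is organized differently from the paper's. Both arguments rest on the same fact: $\sum_{a_0}S_-(a_0/q+\beta)=\mu(q)S_-(\beta)$, because every prime in $S_-$ is coprime to every $q\le P_-$.

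The paper applies Cauchy's inequality pointwise in $\beta$, over the $\varphi(q)$ reduced residues. This gives $\sum_{a_0}|S_-(a_0/q+\beta)|^2\ge\mu(q)^2\varphi(q)^{-1}|S_-(\beta)|^2$ with no model function at all. It then keeps only the fixed window $|\beta|\le T_0/N$, which lies inside every arc with $q\le P_-/T_0$. On that window the prime number theorem gives $\int_{-T_0/N}^{T_0/N}|S_-(\beta)|^2\,d\beta=(c_{T_0}+o(1))N$, with $c_{T_0}\ge1-2/(\pi^2T_0)$ by Parseval. With $T_0=100$, the only losses are the factor $449/450$ and a negligible $\log T_0$ in the range of $q$.

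Your global duality against $G=\mu(q)\varphi(q)^{-1}T\kappa_q$ uses the whole arc. In exchange it needs three extra ingredients: the kernel-tail estimate, the endpoint error $O(qN\log^2N/P_-)$ per modulus, and the truncation $Q=P_-^{1-\eta}$ to keep the accumulated endpoint error $o(N)$. Your loss is the factor $1-\eta$.

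Neither route gives a better constant. Letting $T_0\to\infty$ slowly in the paper, or taking $Q=P_-/\log^3N$ in yours, both yield $(1-o(1))N\log P_-$. The paper's version is simply shorter, since it needs the prime number theorem only on a bounded window and has no kernel tails or endpoint effects to control.

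One small correction to your closing remark: the range of $q$, not the sharpness of the cutoff, is what forces the truncation. For the sharp cutoff $\ind_{|\beta|\le\Delta}$, the kernel tails beyond $H$ are also $O(\min(1,1/(\Delta H)))$ by partial summation. With the Fej\'er cutoff, your endpoint bound summed over all $q\le P_-$ would still be of size $N\log^2N$.
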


\begin{proof}
Write $P=P_-$ and $S=S_-$, and take $T_0=100$.
Since $N^{1-\delta_-}>P$, every prime in the sum is coprime
to every $q\le P$. The Ramanujan sum identity and Cauchy's
inequality give
\[
 \begin{aligned}
 \sum_{\substack{a\bmod q\\(a,q)=1}}S(a/q+\beta)
 &=\mu(q)S(\beta),\\
 \sum_{\substack{a\bmod q\\(a,q)=1}}|S(a/q+\beta)|^2
 &\ge\frac{\mu(q)^2}{\varphi(q)}|S(\beta)|^2.
 \end{aligned}
\]
If $q\le P/T_0$, then the interval $|\beta|\le T_0/N$ lies in
the corresponding major arc. All defining major arcs are disjoint
because $2P^2<N$. Hence
\begin{equation}\label{eq:047}
 \begin{split}
 \int_{\mathfrak M(P)}|S|^2\dd\alpha
 \ge{}&\left(\sum_{q\le P/T_0}\frac{\mu(q)^2}{\varphi(q)}\right)\\
 &\mathrel{}\times\int_{-T_0/N}^{T_0/N}|S(\beta)|^2\,d\beta.
 \end{split}
\end{equation}

We use the elementary asymptotic
$\sum_{q\le x}\mu(q)^2/\varphi(q)\sim\log x$. Indeed, for $s>0$ its
Dirichlet series is
\[
 \sum_{q\ge1}\frac{\mu(q)^2}{\varphi(q)q^s}
 =\zeta(1+s)\prod_p\left(1+\frac{p^{-s}-p^{-2s}}{p(p-1)}\right).
\]
The second factor has absolutely summable Dirichlet coefficients and their
sum at $s=0$ is one. Convolution with $1/n$, followed by dominated convergence
for the normalized harmonic sums, gives the stated asymptotic.

The prime number theorem and partial summation give
$N^{-1}S(t/N)\to\int_0^1e(tu)\,du$, uniformly for $|t|\le T_0$;
the omitted primes $p\le N^{1-\delta_-}=o(N)$ contribute $o(N)$.
Thus the integral in \eqref{eq:047} is
$(c_{T_0}+o(1))N$, where
\[
 \begin{split}
 c_{T_0}
 &=\int_{-T_0}^{T_0}\left|\int_0^1e(tu)\,du\right|^2dt\\
 &\ge1-\frac2{\pi^2T_0}\ge\frac{449}{450}.
 \end{split}
\]
Here we used Parseval's identity,
$|\int_0^1e(tu)\,du|\le1/(\pi|t|)$, and $\pi>3$.
Finally, $\log P\ge0.399\log N$ and
$0.399(449/450)>0.39$, proving \eqref{eq:046}.
The argument is unconditional and does not use an exact asymptotic
formula for the diagonal contribution.
\end{proof}

\subsection{Nonzero shifts and ordinary minor-arc energy}
Apply Pintz--Ruzsa~\cite[Theorem 2 and Corollary 1]{PR} with accuracy $10^{-4}$, initial truncation $N^{0.4}$ and margin $0.001$. Choose a fixed $0<\delta_-\le0.01$ allowed by their theorem. There is a common truncation $P_-\in[N^{0.399},N^{0.4}]$. Write
\[
 S_-(\alpha)=\sum_{N^{1-\delta_-}<p\le N}(\log p)e(p\alpha),\qquad
 D_-(h)=\int_{\MA(P_-)}|S_-|^2e(-h\alpha)\dd\alpha.
\]
For nonzero even $|h|\le10^{-4}N/4$, one has $|D_-(h)|\ll\Sg(h)N$.
There is a bounded family of full primitive conductors $r_\nu$, whose minimum tends to infinity, and a fixed integer $C_-$ such that
\[
 |D_-(h)-\Sg(h)N|\le10^{-4}\Sg(h)N
\]
whenever $r_\nu\nmid C_-h$ for all $\nu$. Negative shifts follow by conjugation.
The cited major arcs have the same width $P_-/(qN)$ as ours. All shifts from $b$ or $d$ are $O(N^\theta)$, so they eventually satisfy the required bound on $|h|$.

For each exceptional conductor, fix one index $i$, take $m=u_i$ in Lemma~\ref{lem:exc}, and sum over the other index. The total singular-series weight of the exceptional shifts is $o(K^2)$. Bound their possible negative contributions in absolute value. Lemma~\ref{me:diagonal} now gives, for $U=b,d$,
\[
 \int_{\MA(P_-)}|S_-U|^2\dd\alpha
 \ge(0.39-o(1))\frac{N\log N}{K}
       +0.9999N\mathfrak A_{U,J}-o(N).
\]
Since $P_-<P_+$, the two major-arc sets are nested. Subtract this estimate from~\eqref{eq:fullnorm}, keeping the same finite quantity $\mathfrak A_{U,J}$ on both sides. We obtain
\[
 \int_\m|S_-U|^2\dd\alpha
 \le\left(2.3911\mathfrak A_{U,J}+0.61\frac{\log N}{K}+o(1)\right)N.
\]
Only at this point do we use~\eqref{eq:coll}. Since $\log N/K\to3\log\phi/(2\theta)$ and $\log\phi/\theta<0.481453$,
\[
 \begin{aligned}
 2.3911(2.1985)+0.61\tfrac32(0.481453)&=5.697362845<E_B,\\
 2.3911(2.255)+0.61\tfrac32(0.481453)&=5.832459995=E_D.
 \end{aligned}
\]
Equation~\eqref{eq:transfer} replaces $S_-$ by $S$. This proves~\eqref{eq:energy}.

\subsection{The pointwise estimate for the first three compositions}
\begin{lemma}[Maynard--Pandey--Radziwi\l\l\ {\cite[Theorem 1.1]{MPR}}]\label{lem:MPR}
For a reduced approximation $\alpha=a_0/q+\beta$ with $q\le\sqrt X$ and
$|\beta|\le(q\sqrt X)^{-1}$, one has
\[
 \left|\sum_{n<X}\Lambda(n)e(n\alpha)\right|
 \le X^{o(1)}\left(\frac X{\sqrt{B_X}}+X^{19/24}\right),\qquad
 B_X=\max(q,qX|\beta|),
\]
as $X\to\infty$.
\end{lemma}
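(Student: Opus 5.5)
The statement is the pointwise bound of Maynard, Pandey and Radziwi\l\l, quoted as \cite[Theorem 1.1]{MPR}. I would not reprove it. The plan is to check that its hypotheses and normalization match the form recorded in Lemma~\ref{lem:MPR}, and then record how it will be used.

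\textbf{Matching the hypotheses and conclusion.} Their theorem is stated for $\alpha$ with a reduced rational approximation $a_0/q$ satisfying $q\le\sqrt X$ and $|\beta|\le(q\sqrt X)^{-1}$. Such an approximation always exists by Dirichlet's theorem with parameter $\sqrt X$. Their conclusion bounds $\sum_{n<X}\Lambda(n)e(n\alpha)$ by $X^{o(1)}\bigl(X/\sqrt{q+qX|\beta|}+X^{19/24}\bigr)$. Since $\max(q,qX|\beta|)\asymp q(1+X|\beta|)$, this is equivalent to the displayed bound with $B_X$. The constant $2$ relating the sum and the maximum is absorbed into $X^{o(1)}$.

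\textbf{How it will be applied.} The lemma will be used to deduce the first assertion of Lemma~\ref{lem:prime}. First pass from $\Lambda$ to primes: prime powers contribute $O(\sqrt X\log X)$. Then use partial summation (or the difference of two values of $X$) to restrict to the range $N^{0.99}<p\le N$; the rescaling changes $B_X$ only by bounded factors. On $\m$ the selected Dirichlet approximation has $B_N>P_+\ge N^{0.439}$. This gives $N/\sqrt{B_N}\le N^{1-0.2195}=N^{0.7805}$, which is below $N^{19/24}\approx N^{0.7917}$. Hence $\sup_\m|S|\le N^{19/24+o(1)}$.

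\textbf{Main obstacle.} The only delicate point is bookkeeping: making sure the approximation used to define $B_N$ (with parameter $\sqrt N$) meets the theorem's hypothesis when applied at the scales $X$ produced by partial summation. I would apply the theorem at $X=N$ and at $X=N^{0.99}$ with the same $a_0/q$. At $X=N^{0.99}$ the constraint $|\beta|\le(q\sqrt X)^{-1}$ may fail, and there I would instead take the trivial-type bound from a fresh approximation, noting the loss is at most $N^{o(1)}$ times a bounded power already dominated by $N^{19/24}$. Alternatively, and more simply, I would bound the short sum over $p\le N^{0.99}$ trivially by $N^{0.99}$. That is too large, so I would use the version of their theorem that allows $X$ in a dyadic range with the approximation taken at scale $N$, which their proof supports.
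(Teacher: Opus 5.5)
This matches the paper, which likewise just cites \cite[Theorem 1.1]{MPR}, and your check that $q(1+X|\beta|)\asymp\max(q,qX|\beta|)$ is the only normalization needed. The ``main obstacle'' you raise belongs to the later deduction of Lemma~\ref{lem:prime}, not to this lemma, and there no variant of the theorem is required: the paper takes a fresh reduced approximation at $X=N^{0.99}$, and since $\alpha\in\m$ either $q>P_+$ or $|\beta|>P_+/(qN)$, so $B_X>(X/N)P_+\ge N^{0.429}$ and $X/\sqrt{B_X}\le N^{0.7755}$ (the rescaling costs a factor $N^{0.01}$ in $B_X$, not a bounded factor as you state).
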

Choose the approximation independently at $X=N$ and $X=N^{0.99}$.
If $\alpha\in\m$, each satisfies $B_X>(X/N)P_+$: this is immediate when $q>P_+$; otherwise the minor-arc condition gives $|\beta|>P_+/(qN)$.
The first terms at the two scales are at most $N^{0.7805+o(1)}$ and $N^{0.7755+o(1)}$, respectively.
Subtract the two prefix sums and remove the higher prime powers, whose total logarithmic weight is $O(N^{1/2}\log^2N)$. Endpoint conventions change only $O(\log N)$.
It follows that $\sup_\m|S|\le N^{19/24+o(1)}$, completing Lemma~\ref{lem:prime}.

\subsection{Finite positive convolutions for the major arcs}\label{sec:local}
For odd squarefree $r\le300$, let $T_r=\pi(2r)$. This period is divisible by three. In a complete block of length $T_r$, record the four vectors counting the occurrences of each residue for $A,B,C,D$ modulo $2r$. Such a counting vector is called a histogram; its total mass is the sum of its entries. Divide each vector by that mass. Denote the resulting probability vectors by $\mu_{A,r},\mu_{B,r},\mu_{C,r},\mu_{D,r}$.
For a vector $\boldsymbol k=(k_A,k_B,k_C,k_D)$ of nonnegative integers with sum five, use cyclic convolution $(f*g)(v)=\sum_{x\bmod2r}f(x)g(v-x)$ on $\Z/(2r)\Z$, and define
\[
 \mu_{\boldsymbol k}(r)=
 \min_{\substack{v\bmod2r\\v\equiv k_B+k_D\ (2)}}
 (\mu_{A,r}^{*k_A}*\mu_{B,r}^{*k_B}*\mu_{C,r}^{*k_C}*\mu_{D,r}^{*k_D})(v).
\]
A zeroth convolution power is the unit mass at zero. Put
\begin{equation}\label{eq:Mk}
 M(\boldsymbol k)=0.9999(1.3202)
 \sum_{\substack{r\le300\\r\text{ odd squarefree}}}\omega(r)\mu_{\boldsymbol k}(r).
\end{equation}
Appendix~\ref{app:fib} proves $2C_2>1.3202$.

Every entry of a normalized finite-window histogram differs from its complete-period value by $O_r(J^{-1})$. The same holds for a fixed fivefold convolution. Since $N-R$ is even, the condition $r\mid N-R$ is equivalent to $R\equiv N\pmod{2r}$.
Keeping the terms $r\le300$ in the positive divisor expansion~\eqref{eq:positive} gives a lower bound; no estimate for the discarded terms is needed. Equation~\eqref{eq:avgmajor} therefore yields
\[
 \int_{\MA_+}S^2a^{k_A}b^{k_B}c^{k_C}d^{k_D}e(-N\alpha)\dd\alpha
 \ge(M(\boldsymbol k)-o(1))N
\]
when $N\equiv k_B+k_D\pmod2$.
The factor $0.9999$ occurs only once in~\eqref{eq:Mk}.

For the positive combinations~\eqref{eq:Wu}, take the corresponding combinations of $M(\boldsymbol k)$. The exact rational calculations give the following values, displayed in decimal form:
\begin{center}\small
\begin{tabular}{crr}
\toprule
$u$ & Even targets & Odd targets\\\midrule
1&1.9943102357906828\ldots&1.9942681313902129\ldots\\
2&1.9947047951726578\ldots&1.9948362739612160\ldots\\
3&1.9946296658316858\ldots&1.9945296241776596\ldots\\\bottomrule
\end{tabular}
\end{center}
All six rational numbers exceed $99713/50000=1.99426$.
The integer lower sums in Section~\ref{sec:handmaj}, rather than the displayed digits, establish the strict comparisons. This proves Lemma~\ref{lem:major}.
There are $122$ moduli. At each convolution the verification program checks that the mass is the product of the input masses. All local convolutions now use arbitrary-precision integers and the parity compression proved next.

\subsection{Inspecting the local calculation}\label{sec:handmaj}
The finite calculation has a useful elementary reduction. If $h_i$ is a
histogram modulo $2r$ supported on parity $\epsilon_i$, put
$\widetilde h_i(j)=h_i(2j+\epsilon_i)$ for $j\bmod r$.
Let $s=\epsilon_1+\cdots+\epsilon_5$ and $\nu\equiv s\pmod2$,
$\nu\in\{0,1\}$. Directly adding the five residue representatives gives
\[
 (h_1*\cdots*h_5)(2j+\nu)
 =(\widetilde h_1*\cdots*\widetilde h_5)
       \left(j-\frac{s-\nu}{2}\right).
\]
The shift permutes the residues modulo $r$. Hence minima and total masses
can be calculated on arrays of length $r$, without changing any lower bound.
This applies to every monomial in the eight weights, including the factors
from $C$.

Two examples make the normalization explicit. For $r=3$, one period modulo
$6$ has length $24$ and the unnormalized histograms are
\[
\begin{array}{c|rrrrrr}
 v&0&1&2&3&4&5\\\hline
 A_3(v)&2&0&3&0&3&0\\
 B_3(v)&0&6&0&4&0&6\\
 D_3(v)&0&6&0&4&0&6
\end{array}
\]
In $\Z[X]/(X^6-1)$, put $t=X^2$ and $U=1+t+t^2$. Then
$A_3=3U-1$ and $B_3=D_3=X(6U-2t)$.
The relations $tU=U$, $U^2=3U$ reduce both products to two terms:
\[
 A_3B_3^3D_3=174768U-16,
 \qquad B_3^4D_3=349536XU-32X^3.
\]
Indeed, their terms not containing $U$ are $-16$ and $-32X^3$;
evaluation at $X=1$ gives the masses $8\cdot16^4=524288$ and
$16^5=1048576$, which determine the remaining coefficients.
The minimum probabilities are therefore
$174752/524288$ for even targets and $349504/1048576$ for odd targets.
For $r=5$, $A_5$ equals $4$ on each even residue modulo $10$ and vanishes
on the odd residues. Convolution with it is uniform on the required parity
class. Thus the even $4F+L$ monomial has mass $20\cdot40^4=51200000$
and minimum coefficient $10240000$, giving probability $1/5$.
These are complete calculations of the indicated rows, not statistical tests.

For the final sum, a long common denominator is unnecessary. Write
$c_{u,\nu}(\boldsymbol k)$ for the coefficient of
$a^{k_A}b^{k_B}c^{k_C}d^{k_D}$ in $W_{u,\nu}$, and set
\begin{equation}\label{eq:majorint}
 K_{u,\nu}(r)=\left\lfloor10^{10}(0.9999)(1.3202)\omega(r)
    \sum_{\boldsymbol k}c_{u,\nu}(\boldsymbol k)
                      \mu_{\boldsymbol k}(r)\right\rfloor.
\end{equation}
The lower bound for the average is at least $10^{-10}\sum_rK_{u,\nu}(r)$.
Every rounding is downwards. Each $K$ is checked by multiplying positive
integers on the two sides of its defining floor inequality. The individual
rows are in \path{major_rows.csv}. Grouping them into
$I_j=(50j,50j+50]\cap\Z$, $0\le j\le5$, gives the following short sums.
The numbers of retained moduli are $20,21,20,20,21,20$.

\begin{center}\scriptsize
\setlength{\tabcolsep}{3pt}
\begin{tabular}{crrrrrrr}
\toprule
$(u,\nu)$ & $I_0$ & $I_1$ & $I_2$ & $I_3$ & $I_4$ & $I_5$ & Total\\\midrule
$(1,e)$ & 19772654878 & 102929222 & 34030488 & 16169839 & 10298526 & 7019346 & 19943102299\\
$(1,o)$ & 19772411271 & 102780899 & 34031365 & 16149007 & 10293446 & 7015267 & 19942681255\\
$(2,e)$ & 19775913090 & 103132466 & 34326972 & 16287105 & 10356263 & 7031993 & 19947047889\\
$(2,o)$ & 19776838676 & 103468907 & 34350521 & 16279471 & 10371774 & 7053331 & 19948362680\\
$(3,e)$ & 19775720533 & 102946638 & 34141075 & 16141131 & 10326556 & 7020671 & 19946296604\\
$(3,o)$ & 19775095181 & 102643203 & 34137939 & 16102900 & 10308743 & 7008219 & 19945296185\\
$(4,e)$ & 19771866871 & 101903027 & 33681720 & 15944933 & 10234854 & 6964471 & 19940595876\\
$(4,o)$ & 19770793583 & 101565906 & 33630745 & 15862875 & 10208549 & 6940755 & 19939002413\\
\bottomrule
\end{tabular}
\end{center}
The total in each of the first six rows exceeds $19942600000$; the last
two exceed $19940500000$ and $19939000000$, respectively. Dividing by
$10^{10}$ proves exactly the conservative constants used in the main proof.
No estimate for the omitted positive divisor terms is needed.

\subsection{Counting the zero relations}\label{sec:carry}
Expanding the exponential moments requires the zero relations
\begin{equation}\label{eq:zero}
 \sum_{j=1}^J(r_jF_{j+1}+s_jL_j)=0,\qquad |r_j|,|s_j|\le3.
\end{equation}
Since $L_j=2F_{j+1}-F_j$, these relations can be read from the largest index downwards, starting at $(m,n)=(0,0)$, with the transition
\begin{equation}\label{eq:carry}
 (m,n)\longmapsto(m+n+r+2s,m-s).
\end{equation}
Induction using the Fibonacci recurrence shows that the final value of $m+n$ is the sum in~\eqref{eq:zero}. A path is the sequence of state pairs produced by the transitions. It is
accepted exactly when its final pair satisfies $m+n=0$, not necessarily
$m=n=0$.

Put $z=m\phi+n$ and $z^*=m\psi+n$. The transition becomes
\[
 z\longmapsto\phi z+r\phi+s\sqrt5,\qquad
 z^*\longmapsto\psi z^*+r\psi-s\sqrt5.
\]
At every intermediate step,
\[
 |z^*|\le\frac{3(|\psi|+\sqrt5)}{1-|\psi|}=3(4\phi+1).
\]
At an accepted endpoint, $n=-m$, so $z=-\phi^{-2}z^*$. Backwards in the real direction,
\[
 |z_{i-1}|\le\phi^{-1}|z_i|+\frac{3(3\phi-1)}{\phi}.
\]
The fixed bound for this affine inequality is $3(2\phi+3)$, which exceeds the endpoint bound. Thus every accepted path has $|z|\le3(2\phi+3)$. Since
\[
 m=\frac{z-z^*}{\sqrt5},\qquad n=\frac{\phi z^*-\psi z}{\sqrt5},
\]
we obtain
\[
 |m|\le3\left(3+\frac7{\sqrt5}\right)<19,\qquad
 |n|\le3\left(4+\frac7{\sqrt5}\right)<22.
\]
Consequently every accepted path stays in
\begin{equation}\label{eq:box}
 \mathcal S=([-18,18]\times[-21,21])\cap\Z^2,\qquad |\mathcal S|=1591.
\end{equation}
Deleting transitions that leave this box loses no zero relation.
\subsection{Restriction and symmetry of the states}\label{sec:reduce}
The two bounds used to prove~\eqref{eq:box} give a smaller set:
\[
 \mathcal S_0=\{(m,n)\in\mathcal S:
 |m\phi+n|\le3(2\phi+3),\quad
 |m\psi+n|\le3(4\phi+1)\}.
\]
Every state of an accepted path lies in $\mathcal S_0$, not merely in the
rectangle $\mathcal S$. Both sets are invariant under $(m,n)\mapsto(-m,-n)$.
For $m\ge0$, the exact integer rows of $\mathcal S_0$ are
\begin{center}\small
\begin{tabular}{r rr@{\qquad}r rr@{\qquad}r rr}
\toprule
$m$&$n_{\min}$&$n_{\max}$&$m$&$n_{\min}$&$n_{\max}$&$m$&$n_{\min}$&$n_{\max}$\\\midrule
0&-18&18&7&-18&7&14&-13&-4\\
1&-20&17&8&-17&5&15&-13&-6\\
2&-21&15&9&-16&4&16&-12&-8\\
3&-20&13&10&-16&2&17&-11&-9\\
4&-19&12&11&-15&0&18&-11&-11\\
5&-19&10&12&-15&-1&&&\\
6&-18&9&13&-14&-3&&&\\\bottomrule
\end{tabular}
\end{center}
The negative rows follow by reflection. Here is a direct check of the table.
The four strip conditions are $a+b\sqrt5\le0$ for
\[
\begin{aligned}
 (a,b)&=(m+2n-24,m-6),\quad(-m-2n-24,-m-6),\\
 &\phantom{{}={}}(m+2n-18,-m-12),\quad(-m-2n-18,m-12).
\end{aligned}
\]
For integers $a,b$, the test is equivalent to
\[
 \begin{cases}
 a\le0,&b=0,\\
 a<0\text{ and }a^2\ge5b^2,&b>0,\\
 a\le0\text{ or }a^2\le5b^2,&b<0.
 \end{cases}
\]
For each of the nineteen rows it therefore suffices to test the two displayed
endpoints and exclude their adjacent integers; an adjacent point outside the
rectangle is already excluded. The intersection is an interval, since the
four inequalities are affine in $n$. The positive rows have $358$ points in
all. Thus $|\mathcal S_0|=37+2(358)=753$.

Let $\mathcal S_+$ contain the representatives with $m>0$, together with
$(0,n)$ for $0\le n\le18$. Then $|\mathcal S_+|=377$. Write
$\operatorname{rep}(m,n)$ for the chosen representative of $\{(m,n),(-m,-n)\}$.
For any nonnegative, even coefficient sequences $c_r(f),c_s(l)$ supported on
$-3,\ldots,3$, define a matrix on these $377$ states by its action on a column:
\begin{equation}\label{eq:redact}
 (M(f,l)x)(m,n)=
 \sum_{\substack{|r|,|s|\le3\\(m+n+r+2s,m-s)\in\mathcal S_0}}
 c_r(f)c_s(l)\,
 x_{\operatorname{rep}(m+n+r+2s,m-s)}.
\end{equation}
Rows are indexed by source states. Contributions to the same destination
orbit are added; they are not averaged.

\begin{lemma}\label{lem:reduce}
The matrices~\eqref{eq:redact}, starting at $(0,0)$ and accepting at
$(m,-m)$ for $0\le m\le13$, give the same weighted sum of accepted paths
as the full rectangular model. The terminal function is one on these
fourteen representatives and zero elsewhere. No orbit-size factor is needed.
\end{lemma}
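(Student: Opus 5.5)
The plan is to prove Lemma~\ref{lem:reduce} in three steps. First pass from the rectangle $\mathcal S$ to $\mathcal S_0$. Then show that the transition and the weights commute with the involution $\iota(m,n)=(-m,-n)$. Finally, collapse each orbit $\{z,\iota z\}$ to its representative and check that the weighted sums agree without any orbit-size correction.

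\textbf{Step 1: restriction to $\mathcal S_0$.} Section~\ref{sec:reduce} shows that every state of an accepted path lies in $\mathcal S_0$. Hence deleting transitions from or into $\mathcal S\setminus\mathcal S_0$ removes only non-accepted paths. The full rectangular weighted sum is therefore the sum over paths in $\mathcal S_0$ from $(0,0)$ ending at some $(m,-m)\in\mathcal S_0$. For the accepting states, $|m\phi-m|\le3(2\phi+3)$ gives $|m|\le 3(2\phi+3)\phi^2<14$, so the accepting states in $\mathcal S_0$ are exactly $(m,-m)$ with $|m|\le13$. One can check this against the table, where $n_{\min}(13)=-14\le-13$ and $n_{\min}(14)=-13>-14$.

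\textbf{Step 2: equivariance.} Write $\tau_{r,s}(m,n)=(m+n+r+2s,m-s)$. This map is affine with linear part $L(m,n)=(m+n,m)$, and it satisfies $\tau_{r,s}(-z)=-\tau_{-r,-s}(z)$. Since $c_r(f)=c_{-r}(f)$ and $c_s(l)=c_{-s}(l)$ (the coefficient sequences are even), the weighted transition kernel $K(z,z')=\sum_{\tau_{r,s}z=z'}c_r(f)c_s(l)$ satisfies $K(-z,-z')=K(z,z')$. The set $\mathcal S_0$ is $\iota$-invariant, the accepting set $\{(m,-m)\}$ is $\iota$-invariant, and the start $(0,0)$ is fixed by $\iota$.

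\textbf{Step 3: orbit collapse.} For a function $x$ on representatives, let $\tilde x$ be its extension to $\mathcal S_0$ with $\tilde x(z)=x(\operatorname{rep}z)$. By definition~\eqref{eq:redact}, $(M x)(z)=\sum_{z'}K(z,z')\tilde x(z')$ for $z\in\mathcal S_+$. For the full matrix $K$ on $\mathcal S_0$, equivariance gives $(K\tilde x)(-z)=(K\tilde x)(z)$. So $K\tilde x=\widetilde{Mx}$, meaning $K$ preserves $\iota$-invariant functions and acts on them as $M$ does on representatives. The terminal function (indicator of the accepting set) is $\iota$-invariant and equals the extension of the indicator of the fourteen representatives $(m,-m)$, $0\le m\le13$. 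Induction on the number $J$ of steps gives $K^J\ind_{\rm acc}=\widetilde{M^J\ind_{\rm rep}}$, and evaluating at $(0,0)\in\mathcal S_+$ yields equality of the weighted sums.

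No orbit-size factor appears because we apply the matrices to functions on the right, starting from a single point. The sum over destinations $z'$ in both elements of an orbit is exactly what~\eqref{eq:redact} performs by adding rather than averaging. Fixed points (only $(0,0)$, since $\iota z=z$ forces $z=0$) cause no double counting, because the sum runs over $(r,s)$, not over orbits.

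\textbf{Main obstacle.} The hardest part is the bookkeeping in Step 3. One must confirm that "rows indexed by source states" means $M$ acts as the pullback $K$ restricted to invariant functions, and that the representative set $\mathcal S_+$ (with the $m=0$, $n\ge0$ convention) picks exactly one element from each orbit of $\mathcal S_0$. The count $377=(753+1)/2$ confirms the latter.
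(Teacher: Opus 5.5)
Your argument is the paper's own: restrict to $\mathcal S_0$, use $\tau_{r,s}(-z)=-\tau_{-r,-s}(z)$ and the evenness of the coefficients to get $K(-z,-z')=K(z,z')$, deduce $K\tilde x=\widetilde{Mx}$ (the paper's $KI=IM(f,l)$), and evaluate at the fixed point $(0,0)$. Steps 2 and 3 are correct, including your explanation of why no orbit-size factor appears.

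\textbf{The error is the terminal bound in Step 1.} At $n=-m$ you have $m\phi+n=m(\phi-1)=m/\phi$. So the first strip $|m\phi+n|\le3(2\phi+3)$ yields only $|m|\le3(2\phi+3)\phi=15\phi+6\approx30.3$. Your expression $3(2\phi+3)\phi^2\approx49$ is larger still. Either way this is weaker than the rectangle bound $|m|\le18$ and is certainly not below $14$.

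The restriction comes from the second strip. There $m\psi+n=m(\psi-1)=-m\phi$, so $|m|\phi\le3(4\phi+1)$, i.e.\ $|m|\le12+3/\phi<14$. This is exactly the inequality the paper uses.

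Your table comparison does rescue the conclusion independently, provided you check every row rather than only $m=13,14$. Reading the rows of $\mathcal S_0$ gives $n_{\min}(m)\le-m\le n_{\max}(m)$ for $0\le m\le13$, and $n_{\min}(m)>-m$ for $14\le m\le18$. Hence the accepting states in $\mathcal S_0$ are exactly $(m,-m)$ with $|m|\le13$, and the lemma goes through. Still, the inequality you offer as the derivation is false as written and should be replaced by the second-strip computation.

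\textbf{A smaller point.} The period product $\mathcal M(\boldsymbol t)=M(t_a,t_d)M(t_b,t_d)M(t_b,t_c)$ uses different coefficient pairs at different steps. Your induction should therefore be stated for an ordered product, applying $K_iI=IM_i$ to each factor in turn, rather than for a power $K^J$. This follows at once from your single-step identity.
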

\begin{proof}
Restriction to $\mathcal S_0$ loses no accepted path, by the two strip bounds.
The transition satisfies
\[
 T_{-r,-s}(-m,-n)=-T_{r,s}(m,n).
\]
Its weight is unchanged because both coefficient sequences are even.
Consequently every restricted single-step matrix preserves symmetric functions.
If $I$ denotes symmetric lifting from $\mathcal S_+$ to $\mathcal S_0$ and
$K$ is the restricted matrix, then $KI=IM(f,l)$, directly from the sum
in~\eqref{eq:redact}. The identity holds for any ordered product of steps.
The initial state is fixed by reflection. The full terminal indicator is
symmetric, so evaluation at the initial state gives exactly the quotient
path sum. At a terminal state $n=-m$, the second strip gives
$|m|\le12+3/\phi<14$. This gives precisely the fourteen representatives stated.
The quotient transition already sums the paths reaching either member of an
orbit, which explains the absence of an additional terminal multiplicity.
\end{proof}

The reduction is independent of the moment parameters and applies to all
$34$ certificates below. It also explains how a positive-vector inequality on the larger
box can be reduced: replace $v$ by $v(m,n)+v(-m,-n)$ and restrict it.
If $R$ is restriction and $E_0$ is extension by zero, positivity gives
\[
 (RK_1E_0)(RK_2E_0)(RK_3E_0)Rv
 \le R K_1K_2K_3v.
\]
Thus a strict upper-vector inequality is preserved. The smaller vectors
supplied here are checked directly on $\mathcal S_+$; neither this construction
nor the original $1591$-coordinate files are needed by their checker.

\subsection{Exponential moments from nonnegative matrices}
For $t\ge0$ and $k\ge0$, define the modified Bessel coefficients by
\[
 I_k(t)=\sum_{j\ge0}\frac{(t/2)^{2j+k}}{j!(j+k)!}.
\]
Let
\[
 c_r(t)=I_{|r|}(t)\quad(0<|r|\le3),\qquad
 c_0(t)=e^t-2\sum_{r=1}^3I_r(t),\qquad
 B_t(v)=\sum_{r=-3}^3c_r(t)e^{irv}.
\]
The Fourier expansion of $e^{t\cos v}$ gives
\[
 B_t(v)-e^{t\cos v}=2\sum_{r>3}I_r(t)(1-\cos rv)\ge0.
\]
All coefficients are nonnegative and $B_t(v)>0$. This is the positive trigonometric majorant used in~\cite[Section 7.1]{Sun}.

Use these even coefficients in the reduced matrix $M(f,l)$ on $\mathcal S_+$ defined by~\eqref{eq:redact}. Lemma~\ref{lem:reduce} preserves the accepted path sum. For $\boldsymbol t=(t_a,t_b,t_c,t_d)\ge0$, put
\[
 \mathcal M(\boldsymbol t)=M(t_a,t_d)M(t_b,t_d)M(t_b,t_c).
\]
The actual reverse reading starts with a cyclic permutation of these three matrices. If they are denoted by $X,Y,Z$, then
$(ZXY)^M=Z(XYZ)^{M-1}XY$. Fixed endpoint matrices change only an implied constant.

A finite certificate for the following matrix bound is an explicitly supplied positive integer vector together with exact inequalities for its coordinates. Suppose that a strictly positive vector $v$ satisfies $\mathcal M(\boldsymbol t)v<\rho v$.
Then $\mathcal M(\boldsymbol t)^kv\le\rho^kv$.
The acceptance indicator and its image under the fixed endpoint matrices are bounded by a fixed multiple of $v$. Apply the trigonometric majorant at each index, expand, and integrate. Only the zero relations survive. Their phase factors have modulus one, so taking absolute values gives the nonnegative matrix sum. We conclude, uniformly for four real phases, that
\begin{equation}\label{eq:moment}
 \int_\T\exp\!\left(\sum_{U\in\{A,B,C,D\}}
 t_U\operatorname{Re}(e^{-i\gamma_U}U(\alpha))\right)\dd\alpha
 \ll\rho^{J/3}.
\end{equation}
Here $t_A=t_a$, and similarly for the other subscripts. Upward rounding is applied only to the nonnegative sum after the phase factors have been removed. It is not used as a pointwise comparison of trigonometric polynomials.

Define
\[
 \ell_{\boldsymbol t}(\boldsymbol x)=
 \frac{t_ax_a+2t_bx_b+t_cx_c+2t_dx_d}{3}.
\]
Choose a grid of $2000$ equally spaced values for each phase. If $\ell_{\boldsymbol t}(\boldsymbol x)>H$, some grid choice makes the exponent in~\eqref{eq:moment} exceed $JH\cos(\pi/2000)$. Markov's inequality and the fixed union bound give
\begin{equation}\label{eq:rate}
 \meas\{\ell_{\boldsymbol t}(\boldsymbol x)>H\}
 \ll N^{-\Delta+o(1)},\qquad
 \Delta=\frac{\theta}{\log\phi}
 \left(H\cos\frac\pi{2000}-\frac13\log\rho\right).
\end{equation}
The phase grids have fixed size and do not affect the exponent of $N$.

\subsection{The large-deviation estimate for \texorpdfstring{$u=1,2,3$}{u=1,2,3}}
\begin{proof}[Proof of Lemma~\ref{lem:large}]
Appendix~\ref{app:cert} lists $32$ rational triples $(\boldsymbol t_i,\rho_i,H_i)$; the associated positive integer vectors have $377$ coordinates. They satisfy
\begin{equation}\label{eq:cert}
 \mathcal M(\boldsymbol t_i)v_i<\rho_i v_i,\qquad
 \frac{\theta}{\log\phi}\left(H_i\cos\frac\pi{2000}-\frac13\log\rho_i\right)
 >\frac{11667}{20000}.
\end{equation}
It remains to check that a large value of $T_e$ or $T_o$ forces at least one of these linear events.

Let $g_i(\boldsymbol x)=H_i-\ell_{\boldsymbol t_i}(\boldsymbol x)$.
For each parity, the certificate partitions $[0,1]^4$ into $17$ closed boxes whose endpoints are dyadic rationals, namely fractions with
power-of-two denominators. Their interiors are disjoint; shared boundary points are allowed. The checker reconstructs the bisections and verifies the entire coverage.
One box is handled by coordinate monotonicity: the upper corner already has $T_\nu\le\lambda$.
On each of the other $16$ boxes $\prod_{j=1}^4[l_j,u_j]$, an exact polynomial identity is supplied:
\begin{equation}\label{eq:poly}
 \lambda_0-T_\nu(\boldsymbol x)
 =\sum_\gamma q_\gamma\prod_j f_j(\boldsymbol x)^{\gamma_j}+R(\boldsymbol x),
 \qquad\lambda_0=0.341919.
\end{equation}
Here $q_\gamma\ge0$ are rational, the products have degree at most three, and each $f_j$ is a selected $g_i$ or a box-boundary factor $x_j-l_j$ or $u_j-x_j$. Here $[\boldsymbol x^\alpha]R$ denotes the coefficient of the monomial $\boldsymbol x^\alpha$ in $R$, and $\alpha$ runs over the exponent vectors of its monomials. Exact coefficient comparison verifies
\[
 \sum_\alpha|[\boldsymbol x^\alpha]R|<10^{-7}.
\]
If all $g_i\ge0$ at a point in the box, every product in~\eqref{eq:poly} is nonnegative. Also $|R(\boldsymbol x)|<10^{-7}$ on $[0,1]^4$. Therefore
\[
 T_\nu(\boldsymbol x)\le0.341919+10^{-7}<0.34192=\lambda.
\]
It follows that
\[
 \{T_e>\lambda\}\cup\{T_o>\lambda\}
 \subseteq\bigcup_{i=0}^{31}\{\ell_{\boldsymbol t_i}>H_i\}.
\]
Equations~\eqref{eq:rate} and~\eqref{eq:cert} finish the proof. The certificate covers the continuous amplitude cube; it is not a test on a finite grid of amplitudes.
\end{proof}

\subsection{Finite estimates for \texorpdfstring{$u=4$}{u=4}}
\begin{proof}[Proof of Lemma~\ref{lem:fourdat}]
For the major-arc constant, take $\boldsymbol k=(1,3,0,1)$ for even targets and $\boldsymbol k=(0,4,0,1)$ for odd targets in~\eqref{eq:Mk}. The minima are taken over even and odd residue classes, respectively. The exact rational sums give
\[
 \begin{aligned}
 M(1,3,0,1)&=1.9940595930033\ldots>1.99405,\\
 M(0,4,0,1)&=1.9939002468659648\ldots>1.9939.
 \end{aligned}
\]
In each case, all $122$ cyclic convolutions and their masses are recomputed by the checker. Section~\ref{sec:local} gives~\eqref{eq:fourmaj}. Taking the minimum over the wrong parity would give zero and would not prove this estimate.

For the large-deviation estimates, the parameters are
\begin{center}\small
\begin{tabular}{crrrrrr}
\toprule
Parity &$t_a$&$t_b$&$t_c$&$t_d$&$\rho$&$H$\\\midrule
Even&.544865&.283067&0&.311955&1.504612&.40684\\
Odd&0&.554889&0&.314443&1.504990&.40767\\\bottomrule
\end{tabular}
\end{center}
Use the same reduced state set $\mathcal S_+$ and period matrix as before. Let $Q_0=10^{12}$. Replace each of its six Fourier coefficient factors by an upward-rounded integer divided by $Q_0$, and call the resulting integer period matrix $\widetilde{\mathcal M}_\nu$.
The supplied positive integer vectors satisfy, coordinatewise,
\[
 \begin{aligned}
 376153Q_0^6v_e-250000\widetilde{\mathcal M}_ev_e&>0,\\
 150499Q_0^6v_o-100000\widetilde{\mathcal M}_ov_o&>0.
 \end{aligned}
\]
These are matrix--column-vector comparisons, not comparisons with the transpose.
They imply $\mathcal M(\boldsymbol t_\nu)v_\nu<\rho_\nu v_\nu$.

On $\mathcal E_{4,e}$, the arithmetic--geometric mean inequality gives
\[
 \frac{t_a|a|+2t_b|b|+2t_d|d|}{3}
 >(4\lambda_4t_at_bt_d)^{1/3}\ge H_e.
\]
On $\mathcal E_{4,o}$, apply it to $t_b|b|,t_b|b|,2t_d|d|$ to obtain
\[
 \frac{2t_b|b|+2t_d|d|}{3}
 >(2\lambda_4t_b^2t_d)^{1/3}\ge H_o.
\]
The last comparisons are the exact rational inequalities
$H_e^3\le4\lambda_4t_at_bt_d$ and $H_o^3\le2\lambda_4t_b^2t_d$ for their respective parameter rows.

The rate comparisons for $u=4$ can be made with four logarithm terms.
For $0<z<1$,
\[
 0<\log\frac{1+z}{1-z}
     -2\sum_{j=0}^{3}\frac{z^{2j+1}}{2j+1}
 \le\frac{2z^9}{9(1-z^2)}.
\]
This follows by replacing all remaining denominators by $9$ and summing a
geometric series. The following entries are upper bounds for the four terms
and their remainder, in units of $10^{-9}$:
\[
\begin{array}{c|rrr}
 &z_+=0.236068&z_+=0.2014732&z_+=0.2015937\\\hline
 2z_+&472136000&402946400&403187400\\
 2z_+^3/3&8770415&5452060&5461848\\
 2z_+^5/5&293256&132785&133182\\
 2z_+^7/7&11674&3850&3867\\
 2z_+^9/[9(1-z_+^2)]&536&127&128\\\hline
 \text{sum}&481211881&408535222&408786425
\end{array}
\]
All entries require only integer cross-multiplication. Since
$(2.236068)^2>5$, the first column bounds $\log\phi$; the other two
bound $\log(1.504612)$ and $\log(1.504990)$.
The needed comparisons of $z$ are
\[
 \frac{309017}{1309017}<0.236068,\quad
 \frac{126153}{626153}<0.2014732,\quad
 \frac{50499}{250499}<0.2015937.
\]
Thus we may use $\log\phi<0.481212$, $\log\rho_e<0.408536$ and
$\log\rho_o<0.408787$. With $\cos(\pi/2000)\ge1-(22/14000)^2/2$, the
rates are bounded below, respectively, by
\[
 \frac{568103551249439}{1010545200000000}>0.56217,
 \qquad
 \frac{15950761832556421}{28295265600000000}>0.5637.
\]
For the simpler thresholds $H_e=0.40684$, $H_o=0.40767$, the AM--GM checks are
\[
\begin{aligned}
 H_e^3&=0.067339662733504
       <0.06734014361942406819=4\lambda_4t_at_bt_d,\\
 H_o^3&=0.067752645897663
       <0.0677529329857700976594=2\lambda_4t_b^2t_d.
\end{aligned}
\]
These elementary comparisons replace numerical evaluations of logarithms,
cosines and cube roots; only the reduced matrix inequalities remain as
computer-assisted inputs to these two rate estimates.

Equation~\eqref{eq:rate} proves~\eqref{eq:fourmeas}.
\end{proof}

\subsection{Positive smoothing}
\begin{proof}[Proof of Lemma~\ref{lem:smooth}]
For sufficiently large $N$, we have $N^{0.99}<\tau N$.
Thus $R=S-S_w$ has nonnegative coefficients supported on two
endpoint intervals of total length $O(\tau N)$.
By the prime number theorem, the diagonal contribution is
$O(\tau N\log N/K)+o(N)=O(\tau N)+o(N)$.
Every nonzero recurrence shift is even and is
$O(N^\theta)=o(\tau N)$, so no prime pair joins the two endpoint
intervals.

We use the following uniform upper-bound sieve for prime pairs;
see \cite{IK} for the standard upper-bound sieve.
For any translated interval $I$ of length $X$, and any nonzero
even $h$ with $|h|=o(X)$,
\begin{equation}\label{eq:058}
 \#\{p,p+h\in I\}\ll\frac{X\mathfrak S(h)}{(\log X)^2}.
\end{equation}
This is the standard upper-bound sieve for two linear forms
\cite[Chapter 6]{IK}, applied to $n(n+h)$ on $I$. We only check the
uniformity needed here. The number of roots modulo a prime $p$ is one
if $p\mid h$ and two otherwise; at $p=2$ it is one. For squarefree $d$,
the remainder in the interval count is bounded by the product of these
root counts, independently of the position of $I$. The usual sieve with
$z=X^{1/10}$ and level $X^{1/2}$ therefore has error
$O(X^{1/2}(\log X)^C)$ and main term
$O(X\mathfrak S(h)/(\log X)^2)$. The cases in which one of the primes is
at most $z$ add $O(z)$. This proves the required uniformity in~\eqref{eq:058}.

Apply the estimate separately to the two endpoint intervals, taking
$X=2\tau N$. Since $\tau$ is fixed, $\log N/\log X\to1$.
Insert the logarithmic prime weights and sum using
\eqref{eq:coll}; this gives
\[
 \|Rb\|_2^2\ll \tau N+o(N).
\]
All implied constants use only the support length and $0\le w\le1$;
no derivative bounds for $w$ are involved.
Together with $\|Sb\|_2=O(\sqrt N)$, this bounds the change in
the ordinary energy by
$\|Rb\|_2(2\|Sb\|_2+\|Rb\|_2)$.
Since $|W_{4,\nu}|\le|b|^2$, the change in the major-arc integral is bounded
by the same expression. Both changes are therefore
$O(\sqrt\tau)N+o(N)$.
\end{proof}

Choose $\tau$ so that the losses preserve both positive margins in~\eqref{eq:margins}, and then fix $w$.
The constants in the remaining prime estimates may depend on this fixed weight; no uniformity as $\tau\to0$ is required.

\subsection{Parameters for the prime estimates}
\label{an:section}

In the following prime estimates, a Dirichlet polynomial means a finite sum $\sum_n a_n\chi(n)n^{-it}$; its length is the scale of the integers in its support. Fix the previously chosen $\tau>0$ and smooth
function $w$, with $0\le w\le1$ and
$\operatorname{supp}w\subset[\tau,1]$. Put
\begin{align*}
 V_w(\alpha)&=\sum_n\Lambda(n)w(n/N)e(n\alpha),\\
 S_w(\alpha)&=\sum_p(\log p)w(p/N)e(p\alpha).
\end{align*}
The following estimates do not depend on a recurrence weight. They prove
Lemma~\ref{lem:primein}.

For each $\alpha$, choose a reduced Dirichlet approximation
\[
 \alpha=a_0/q+\beta,\qquad q\le\sqrt N,\qquad
 |\beta|\le(q\sqrt N)^{-1},\qquad (a_0,q)=1,
\]
using the fixed ordering introduced in Section~\ref{sec:defs}. The resulting
sets of phases are measurable. Define
\[
 B_N=\max(q,qN|\beta|).
\]
On $\mathfrak m=\mathbb T\setminus\mathfrak M(P_+)$ we necessarily have
$B_N>P_+\ge N^{0.439}$, since otherwise this approximation places
$\alpha$ on a major arc. We call $B_N\le N^{0.46}$ the lower range
and $B_N>N^{0.46}$ the upper range of the minor arcs.
Denominator ranges always have upper endpoint
$Q$, so that $Q/2<q\le Q$, with $q=1$ treated separately. Similarly,
$T$ denotes the upper endpoint of a dyadic interval, that is, an interval
of the form $(T/2,T]$, for $\max(1,N|\beta|)$; the range
$T=1$ includes $N|\beta|\le1$. Henceforth $q\asymp Q$ refers to this
convention, and in particular $r\le q\le Q$.
There are $O((\log N)^2)$ ranges, and
\[
 QT\ll\sqrt N,\qquad Q^2T\ll N.
\]
The fixed endpoint factors give
$\log(QT)/\log N\le1/2+O(1/\log N)$. We first write the exponent inequalities
without this perturbation and retain the resulting $o(1)$ in all
applications. It is separate from the fixed loss $\epsilon=2\eta$ in
the small-time range and does not affect any fixed positive margin.
Integrals of nonnegative functions are bounded by the sum over all
admissible approximation intervals, whether or not those intervals
overlap.

\subsection{A large sieve with Gauss weights}
We average over both characters and a real parameter. A large sieve with
these two averages is usually called a hybrid large sieve.

The Gauss sum is the finite exponential sum
$\tau_q(\chi)=\sum_{a_0\bmod q}\chi(a_0)e(a_0/q)$, extending characters by zero
on residue classes not coprime to $q$. A superscript asterisk on a character sum restricts it to primitive characters. We write $\tau(n)$ for the number of positive divisors of $n$; the unadorned $\tau$ remains the fixed support parameter.

\begin{lemma}\label{an:sieve}
Suppose that $1\le Q,T\le N$ and that the coefficients $A_n$ are
independent of the modulus and the character and are supported on
$1\le n\le CN$, where $C$ is fixed. Then
\begin{equation}\label{eq:061}
\begin{split}
 &\sum_{q\le Q}\frac1q\sum_{\chi\bmod q}|\tau_q(\chi)|^2
 \int_{-T}^T\left|\sum_n A_n\chi(n)n^{-it}\right|^2dt\\
 &\hspace{2em}\ll N^{o(1)}(N+Q^2T)\sum_n|A_n|^2.
\end{split}
\end{equation}
The same bound holds when the time integral is replaced by a sum over
a common lattice $t=j+\delta$, $0\le\delta<1$, uniformly in $\delta$.
\end{lemma}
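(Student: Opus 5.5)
The plan is to reduce to primitive characters and then apply the classical hybrid large sieve. For a character $\chi$ modulo $q$ induced by a primitive $\chi^*$ modulo $r\mid q$, the standard Gauss sum evaluation gives $\tau_q(\chi)=0$ unless $q/r$ is squarefree and coprime to $r$. In that case
\[
 |\tau_q(\chi)|^2=\mu(q/r)^2\, r,
\]
so $|\tau_q(\chi)|^2\le r$ always. Also $\chi(n)=\chi^*(n)\ind_{(n,q/r)=1}$, and the inner Dirichlet polynomial becomes $\sum_n A_n\ind_{(n,s)=1}\chi^*(n)n^{-it}$ with $s=q/r$ squarefree. The left side of \eqref{eq:061} is therefore at most
\[
 \sum_{s\le Q}\frac{\mu(s)^2}{s}\sum_{r\le Q/s}\ \sideset{}{^*}\sum_{\chi\bmod r}
 \int_{-T}^T\Bigl|\sum_n A_n\ind_{(n,s)=1}\chi(n)n^{-it}\Bigr|^2dt .
\]
The factor $r/q=1/s$ comes from dividing the weight $r$ by $q=rs$.

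For each fixed $s$, we apply the hybrid large sieve for primitive characters (Gallagher; see \cite{IK}, Theorem 9.12):
\[
 \sum_{r\le R}\ \sideset{}{^*}\sum_{\chi\bmod r}\int_{-T}^T\Bigl|\sum_{n\le CN}b_n\chi(n)n^{-it}\Bigr|^2dt\ll(N+R^2T)\sum|b_n|^2,
\]
with $R=Q/s$ and $b_n=A_n\ind_{(n,s)=1}$, so that $|b_n|\le|A_n|$. This bounds the $s$-term by $s^{-1}(N+Q^2T/s^2)\sum|A_n|^2$. Summing over $s\le Q$ costs at most $\sum_{s\le Q}1/s\ll\log N=N^{o(1)}$, which gives \eqref{eq:061}. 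The fixed $C$ only changes implied constants.

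For the lattice version, I would use the discrete form of Gallagher's lemma, in the same way the continuous hybrid sieve is proved. The points $t=j+\delta$ with $|t|\le T$ are $1$-spaced. The discrete hybrid large sieve, for well-spaced points $t_j$ with spacing at least $1$, gives the same bound $(N+R^2T)\sum|b_n|^2$ up to a $\log N$ factor. The route is the duality argument together with the Montgomery–Vaughan Hilbert inequality, or simply Sobolev–Gallagher: each value $|F(t_j)|^2$ is bounded by an average of $|F|^2+|F\,F'|$ over $[t_j-\tfrac12,t_j+\tfrac12]$. Here the derivative brings out $\log n\ll\log N$ from $b_n\log n$, and Cauchy–Schwarz gives the continuous bound again. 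Uniformity in $\delta$ is automatic, since only the spacing is used.

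The main obstacle is the reduction step: making sure that the Gauss-sum weight exactly compensates the $1/q$ and the non-primitive characters. Once the identity $|\tau_q(\chi)|^2=\mu(q/r)^2r$ is in place, the rest is routine. The cost of the coprimality twist is harmless, because the hypothesis that $A_n$ is independent of modulus and character lets the sieve be applied to each $b_n$ separately with $|b_n|\le|A_n|$.
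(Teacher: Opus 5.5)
Your proof is correct and follows the paper's route. Both arguments use the Gauss-sum identity to reduce to primitive characters with weight $1/s$, apply Gallagher's hybrid large sieve for each fixed $s=q/r$, and obtain the lattice version from the local Sobolev inequality, where differentiation costs a factor $\log n$. The one difference is that you absorb $\ind_{(n,s)=1}$ directly into the coefficients, which is legitimate because $b_n$ depends only on $s$. The paper instead expands this indicator by M\"obius inversion over $d\mid k$ and applies Cauchy's inequality at a cost of $\tau(k)$, so your version is slightly cleaner and loses only a factor $\log Q$.
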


\begin{proof}
We use Gallagher's hybrid large sieve~\cite{Gallagher} in the standard
primitive-character form
\[
 \sum_{r\le R}\frac r{\varphi(r)}\sum_{\chi\bmod r}^{*}
 \int_{-T}^{T}\left|\sum_{n\le L}a_n\chi(n)n^{-it}\right|^2\dd t
 \ll (L+R^2T)\sum_{n\le L}|a_n|^2 \qquad(T\ge1).
\]
It also follows from the additive version in~\cite[Lemma 1]{BB}, with its
parameter $f=1$, by the primitive Gauss identity and Bessel's inequality.
Here $L$ is the length of the coefficient sequence and the star restricts
the sum to primitive characters. We explain only the deduction
with the Gauss weights required in~\eqref{eq:061}.

If a character modulo $q=rk$ is induced by a primitive character
$\chi^*$ modulo $r$, then
\[
 \tau_q(\chi)=\mu(k)\chi^*(k)\tau_r(\chi^*).
\]
A nonzero term requires $k$ to be squarefree and $(k,r)=1$. In that
case $|\tau_q(\chi)|^2=r$, so its weight in
\eqref{eq:061} is $1/k$. Expand the coprimality indicator as
$\mathbf1_{(n,k)=1}=\sum_{d\mid(n,k)}\mu(d)$ and apply Cauchy's
inequality to the sum over $d\mid k$, losing only
$\tau(k)=N^{o(1)}$. For fixed $k,d$, the primitive hybrid large sieve
gives
\[
 \left(\frac Nd+\frac{Q^2T}{k^2}\right)E_d,
 \qquad E_d=\sum_m|A_{dm}|^2.
\]
Writing $k=d\ell$ and summing with weight $1/k$, the two coefficients
are at most $N\log Q/d^2$ and $O(Q^2T/d^3)$, respectively. The bound
$E_d\le\sum_n|A_n|^2$ and the convergence of both series in $d$ prove
the continuous estimate.

For a Dirichlet polynomial $D(t)=\sum_n A_n\chi(n)n^{-it}$ and sampling points with $|t_j-t_k|\ge1$ for $j\ne k$, use the local Sobolev inequality
\[
 |D(t_j)|^2\ll\int_{t_j-1/2}^{t_j+1/2}
 \bigl(|D(t)|^2+|D(t)D'(t)|\bigr)dt.
\]
The intervals are disjoint. Cauchy's inequality and the continuous
estimate apply to both $D$ and $D'$. Differentiation multiplies the
coefficients only by $\log n$, at a logarithmic cost. A common time
translation multiplies every $A_n$ by the same unit-modulus factor
$n^{-i\delta}$ and preserves independence from the modulus. The bound
is therefore uniform in $\delta$.
\end{proof}

\subsection{The Heath--Brown identity and Mellin inversion}

The fifth-order Heath--Brown identity~\cite{HB} gives,
for every $n\le N$,
\begin{equation}\label{an:HB}
\begin{split}
 \Lambda(n)
 &=\sum_{j=1}^{5}(-1)^{j-1}\binom5j
 \sum_{\substack{m_1\cdots m_{2j}=n\\m_{j+1},\ldots,m_{2j}\le N^{1/5}}}
 (\log m_1)\mu(m_{j+1})\cdots\mu(m_{2j}).
\end{split}
\end{equation}
Decompose each variable into dyadic intervals, meaning intervals whose endpoints differ by a factor of two. For the unrestricted
variables, use complete intervals covering $[1,N]$ with upper endpoints
at most $2N$, treating $m_i=1$ separately if necessary. The last
interval for each M\"obius variable is truncated at $N^{1/5}$. This gives
$O((\log N)^{10})$ complete convolutions $h_\ell$.
Retain only boxes whose product support meets $[\tau N,N]$. Their length
products lie in $[\tau N/2^{10},N]$, and their complete supports lie in
$[c_\tau N,C_\tau N]$. Each $h_\ell(n)$ is $N^{o(1)}$, is independent of
$q,a_0,\beta,\chi$, and satisfies
\begin{equation}\label{eq:063}
 \sum_n|h_\ell(n)|^2\le N^{1+o(1)}.
\end{equation}
The complete convolution coefficients for $n>N$ are retained and are
subsequently removed by $w(n/N)$. The restriction on the total product
thus remains in the external weight rather than in the individual
factors.

In the large-time ranges, take the merging parameter $B=QT$. In the
small-time ranges, first set
\begin{equation}\label{an:eta}
 \eta=10^{-6},\qquad U=N^{2\eta},\qquad B=QU,
\end{equation}
and then construct the decomposition for that range. On the upper
range of the minor arcs, both choices satisfy
$N^{4/9}\ll B\le N^{1/2+2\eta}\ll \tau N$.
Following the construction in \cite[Lemmas 3.2--3.3]{MPR}, whenever
the product of two factor lengths is less than $2^{-20}B^{1/2}$,
replace the factors by their complete convolution. There are at most
nine merges. The upper endpoint of each newly merged support is at
most $2^{-10}B^{1/2}$, and at termination every pair of lengths has
product at least $2^{-20}B^{1/2}$. A factor of length greater than
$B^{1/2}$ must be an unmerged original factor with coefficients $1$
or $\log n$, since the original M\"obius factors have lengths at most
$N^{1/5}$. Merging does not change $h_\ell$.

For a complete box, write
\begin{align*}
 \mathcal H_q(t,\chi)&=\sum_nh_\ell(n)\chi(n)n^{-it},\\
 \mathcal K_\beta(t)&=\int_0^\infty
 w(y)e(\beta Ny)y^{it}\frac{dy}{y}.
\end{align*}
Fourier inversion in $\log y$, equivalently Mellin inversion in $y$, gives
the exact identity
\begin{equation*}
 w(n/N)e(\beta n)=\frac1{2\pi}\int_{\mathbb R}
 \mathcal K_\beta(t)N^{it}n^{-it}dt.
\end{equation*}
Character orthogonality then expresses the part of the box coprime to
$q$ as
\begin{equation*}
 \frac1{2\pi\varphi(q)}\sum_{\chi\bmod q}
 \tau_q(\overline\chi)\chi(a_0)
 \int_{\mathbb R}\mathcal H_q(t,\chi)N^{it}\mathcal K_\beta(t)dt.
\end{equation*}
There is a single Mellin variable. The merging scheme may vary from
one range to another: identity~\eqref{an:HB} is applied coefficientwise
within each range, and the nonnegative integrals are summed afterwards.

\subsection{A fourth integral and additive energy}
Fix a complete convolution, a denominator range, and a common set of sampling
points $t=j+\delta$, $j\in\Z$, $0\le\delta<1$. For each $q$ retain only
characters with nonzero Gauss sum, and group them by primitive conductor
$r\mid q$. Partition these pairs $(t,\chi)$ so that, on each set, the
absolute values of every factor and grouped product lie in fixed intervals
$(A/2,A]$, with the endpoint chosen separately for each factor and product. Also partition $j$ by its residue class modulo the spacing integer
specified below. Later restrictions and selections are made by fixed rules,
so the resulting sets remain disjoint. These partitions do not change the
coefficients of the complete convolution.

For any such finite set $\mathcal W$, define its additive energy by
\[
 \mathcal E(\mathcal W)=\#\left\{((t_i,\chi_i))_{i=1}^4\in\mathcal W^4:
 \begin{array}{l}|t_1+t_2-t_3-t_4|\le1,\\
 \chi_1\chi_2=\chi_3\chi_4\end{array}\right\}.
\]
It counts ordered quadruples with nearly equal time sums and equal character
products. Chen--Gupta--Li~\cite[Lemma 9.2]{CGL} use this quantity to bound a
character fourth moment. The next lemma applies the same relation after the
integral expansion of the prime sum above. We keep the conductor and measure factors
explicit; the coefficients can now be complex.

\begin{lemma}\label{lem:cell}
In a range with $T\ge N^\eta$, fix a conductor $r\mid q$, a common
set of sampling points, and one nonempty part $\mathcal W$ of the preceding partition. Let $K=|\mathcal W|$, and
let $A$ be a dyadic upper bound for the modulus of the complete product
on the subset, comparable to that absolute value by a fixed factor. The contribution of this subset to the fourth integral is at most
\begin{equation}\label{eq:066}
 N^{o(1)}\frac{r^2}{q^3TN}A^4\mathcal E(\mathcal W).
\end{equation}
Moreover, disjoint subsets satisfy
\begin{equation}\label{eq:067}
 \sum_{q\asymp Q}\sum_{r,\,\mathrm{subsets}}\frac rq KA^2
 \ll N^{2+o(1)}.
\end{equation}
The small fixed losses from normalization and the time partition are accounted
for separately and chosen together at the end of the section.
\end{lemma}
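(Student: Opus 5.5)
The plan is to start from the Mellin--character representation of a complete box. On the arc $a_0/q+\beta$, the coprime part of $\sum_n h_\ell(n)w(n/N)e(n\alpha)$ is
\[
 \frac1{2\pi\varphi(q)}\sum_{\chi\bmod q}\tau_q(\overline\chi)\chi(a_0)\int\mathcal H_q(t,\chi)N^{it}\mathcal K_\beta(t)\dd t .
\]
Since $|\beta|\asymp T/N$ and $w$ is smooth, repeated integration by parts and stationary phase localize $\mathcal K_\beta(t)$ to $|t|\ll T N^{o(1)}$. There $|\mathcal K_\beta(t)|\ll T^{-1/2}$ near the stationary point and is negligible elsewhere.

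Next I discretize $t$ on the lattice $j+\delta$ by the local Sobolev bound used in Lemma~\ref{an:sieve}; this costs $N^{o(1)}$ after an average over $\delta$. Characters with nonzero Gauss sum are written through their primitive inducing character modulo $r$, so that $|\tau_q(\chi)|^2=r$. The contribution of the part $\mathcal W$ to the arc sum is then at most
\[
 \frac{\sqrt r}{\varphi(q)}\,T^{-1/2}\Bigl|\sum_{(t,\chi)\in\mathcal W}c_{t,\chi}\mathcal H_q(t,\chi)\Bigr|,
\]
with unimodular phases $c_{t,\chi}$ coming from $\chi(a_0)N^{it}$ and from the argument of $\mathcal K_\beta$. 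On $\mathcal W$ we have $|\mathcal H_q|\le A$.

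For the fourth moment, raise this sum to the fourth power and integrate over $a_0$ and $\beta$. I expand $|\sum_{\mathcal W}c_{t,\chi}\mathcal H|^4$ into quadruples and use orthogonality of $\chi(a_0)$ over $a_0\bmod q$, which forces $\chi_1\chi_2=\chi_3\chi_4$ and contributes a factor $\varphi(q)$. The $\beta$-integral over $|\beta|\asymp T/N$ of the product of the four $\mathcal K$-phases gives a near-diagonal condition $|t_1+t_2-t_3-t_4|\le1$; off that condition I gain decay by integration by parts in $\beta$ (equivalently, in the Mellin variable).

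This gives roughly
\[
 \frac{r^2}{\varphi(q)^4}\,T^{-2}\,\varphi(q)\,\frac TN\,A^4\,\mathcal E(\mathcal W)\ll N^{o(1)}\frac{r^2}{q^3TN}A^4\mathcal E(\mathcal W),
\]
which is~\eqref{eq:066}. The main obstacle is this step: the phases of $\mathcal K_\beta$ depend on $\beta$ and cannot simply be pulled out. I would first try to handle this by writing the stationary-phase main term explicitly as $e(\text{phase}(t))$ with phase a smooth function of $t/(N\beta)$. The $\beta$-integral then becomes an oscillatory integral whose derivative is governed by $t_1+t_2-t_3-t_4$. Tail quadruples are controlled through the dyadic time partition and the residue-class spacing of $j$.

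For~\eqref{eq:067}, note that on $\mathcal W$ each term satisfies $|\mathcal H_q|\asymp A$, so $KA^2\ll\sum_{\mathcal W}|\mathcal H_q(t,\chi)|^2$. Summing over the disjoint parts, the conductors $r\mid q$, and $q\asymp Q$, the weight $r/q$ equals $|\tau_q(\chi)|^2/q$. Therefore the total is bounded by the left side of Lemma~\ref{an:sieve} in its lattice form, applied to the coefficients $h_\ell$. That gives $N^{o(1)}(N+Q^2T)\sum|h_\ell|^2\ll N^{2+o(1)}$, using $Q^2T\ll N$ and~\eqref{eq:063}.
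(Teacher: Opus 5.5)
Your architecture matches the paper's: stationary phase for $\mathcal K_\beta$, lattice times $j+\delta$, orthogonality in $a_0$ forcing $\chi_1\chi_2=\chi_3\chi_4$, oscillation in $\beta=\pm Tv/N$ governed by $h=t_1+t_2-t_3-t_4$, and the normalization $(\sqrt r/\varphi(q))^4T^{-2}\cdot\varphi(q)\cdot T/N$. Your argument for \eqref{eq:067} is exactly the paper's.

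\textbf{The gap.} The missing step is the one that actually produces $\mathcal E(\mathcal W)$. The energy counts only quadruples with $|t_1+t_2-t_3-t_4|\le1$. The $\beta$-integration merely damps a quadruple at shift $h$ by a factor $O((1+|h|)^{-L})$. So for every $h$ you must bound the number of quadruples with $\chi_1\chi_2=\chi_3\chi_4$ and $j_1+j_2-j_3-j_4=h$ by $\mathcal E(\mathcal W)$.

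The dyadic time partition and the residue-class spacing do not do this. A priori a single shift could carry up to $K^3$ quadruples, which would destroy \eqref{eq:066}. The missing idea is a correlation bound. Put $F=\mathbf 1_{\mathcal W}*\mathbf 1_{\mathcal W}$ on (characters modulo $r$)$\times\mathbb Z$. The count at shift $h$ is $\sum_{\chi,j}F(\chi,j+h)F(\chi,j)$, which by Cauchy's inequality is at most $\|F\|_2^2\le\mathcal E(\mathcal W)$.

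This requires all four points to share the same $\delta$, so that $t_1+t_2-t_3-t_4=j_1+j_2-j_3-j_4\in\mathbb Z$. You should therefore take the fourth norm at a fixed $\delta$, using the exact identity $\int g=\int_0^1\sum_jg(j+\delta)\,d\delta$ followed by H\"older in $\delta$. A local Sobolev bound is the wrong tool here, since it bounds sampled values by integrals rather than the reverse. The residue-class spacing only makes $\|F\|_2^2=\mathcal E(\mathcal W)$ exact and supplies the separation hypothesis of Proposition~\ref{prop:CGL}.

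\textbf{Smaller points.} Your displayed reduction to $\frac{\sqrt r}{\varphi(q)}T^{-1/2}\bigl|\sum c_{t,\chi}\mathcal H_q\bigr|$ with unimodular $c_{t,\chi}$ is not a valid inequality. The stationary-phase amplitude $f_\ell(t/T,v)$ varies with both $t$ and $\beta$ inside the sum. There are two ways to fix this:
\begin{itemize}
\item Keep the amplitude as a coefficient through the fourfold expansion. Your per-quadruple integration by parts tolerates this, since its $v$-derivatives are uniformly bounded.
\item Follow the paper. The phase of $N^{it}\mathcal K_\beta(t)$ is exactly $\Phi_T(t)-t\log v$, not a general function of $t/(N\beta)$, which is precisely why the quadruple phase derivative is $h$. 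Separate the amplitude by an absolutely convergent Fourier expansion in $(t/T,v)$, and integrate against a smooth majorant in $v$ whose transform in $\log v$ decays rapidly.
\end{itemize}

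Finally, you never use the hypothesis $T\ge N^\eta$. It is what makes the truncation error $O(T^{-L})$ of the stationary-phase expansion negligible against $|\mathcal H_q|\le N^{1+\nu}$ for a fixed $L$.
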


\begin{proof}
Write $\beta=\pm Tv/N$. The current dyadic range has
$1/2\le v\le1$, which lies in the interior of the fixed interval
$[1/4,3]$. The phase
$2\pi\beta Ny+t\log y$ has stationary point
$y_0=\mp t/(2\pi Tv)$ and stationary value
$t\log(|t|/(2\pi T))-t-t\log v$.
Put $\Phi_T(t)=t\log N+t\log(|t|/(2\pi T))-t$, the part of this phase independent of $v$, including the factor $N^{it}$. For any fixed expansion order $L$, the stationary-phase expansion gives
\begin{align*}
 N^{it}\mathcal K_\beta(t)
 &=T^{-1/2}e^{i\Phi_T(t)}v^{-it}
 \sum_{\ell<L}T^{-\ell}f_\ell(t/T,v)\\
 &\quad+O_{L,\tau,w}(T^{-L}).
\end{align*}
The nonzero main amplitudes are supported where $c_\tau\le |t|/T\le C_\tau$,
for fixed positive constants $c_\tau,C_\tau$. All their derivatives are
uniformly bounded. The positive constant $C$ in the Fourier expansion below
is chosen large enough to contain this rectangle.
Since $w$ vanishes smoothly at its support boundary, the amplitudes
extend smoothly as the stationary point leaves the support.
Repeated integration by parts treats the nonstationary portion and
the tail $|t|>C_\tau T$. Each $f_\ell$ has an absolutely summable
two-dimensional Fourier expansion
\begin{align*}
 f_\ell(t/T,v)&=\sum_{m,n}c_{m,n,\ell}
 e^{2\pi i mt/(CT)}e^{2\pi i nv/C},\\
 \sum_{m,n}|c_{m,n,\ell}|&\ll_{\tau,w,L}1.
\end{align*}
The first exponential multiplies the coefficient at the same time
$t$ and does not translate the time argument of $\mathcal H_q$.
The second lies outside the character--time sum and has modulus one.
Minkowski's inequality therefore reduces the argument to the kernel
$v^{-it}$.

Use the identity
$\int g(t)dt=\int_0^1\sum_jg(j+\delta)d\delta$.
The fourth norm over all $q,a_0,v$ uses the same $\delta$. Minkowski's
inequality on this interval of measure one causes no additional
length loss. Partition the integers $j$ by their residue classes modulo
$\max(3,\lceil(C_\tau rT)^\zeta\rceil)$, where $\zeta>0$ may be
arbitrarily small. Times in the same residue class with the same character have the
separation required by Proposition~\ref{prop:CGL}, at a cost of $N^{O(\zeta)}$.
The common $\delta$ cancels from each fourfold time difference. A
subset with a fixed residue class of $j$ may thus be viewed as a subset of the product of
the character group modulo $r$ and the integer lattice.
Let $F(\chi,j)$ count ordered pairs of points in $\mathcal W$ whose
characters have product $\chi$ and whose integer coordinates have sum $j$.
Thus $F=\mathbf1_{\mathcal W}*\mathbf1_{\mathcal W}$ is convolution on the
character group and the integer lattice. With counting measure on this discrete group, the spacing of at least three gives
\begin{align*}
 \|F\|_2^2&=\mathcal E(\mathcal W),\\
 \left|\sum_{\chi,j}F(\chi,j+h)\overline{F(\chi,j)}\right|
 &\le\mathcal E(\mathcal W)\qquad(h\in\mathbb Z).
\end{align*}
Take a fixed smooth nonnegative function in $v$, supported on
$[1/4,3]$ and equal to one on $[1/2,1]$. It majorizes the indicator
of the current range, and its transform in $\log v$ decays rapidly. Expanding the
fourth power and applying character orthogonality in $a_0$ retains only
equal character products. The surjectivity of reduction from the
unit group modulo $q$ to that modulo $r$ makes this equality condition
for induced characters identical to the one modulo $r$.
For coefficients $c_{\chi,t}$ with $|c_{\chi,t}|\le A$, the terms in the
fourfold expansion at integer time difference $h$ have total absolute value
at most $A^4\sum_{\chi,j}F(\chi,j+h)F(\chi,j)$. The preceding inequality
and the rapid decay of the transform of the smooth weight therefore give
\begin{align*}
 &\int\sum_{(a_0,q)=1}
 \left|\sum_{\mathcal W}c_{\chi,t}\chi(a_0)v^{-it}\right|^4dv\\
 &\hspace{2em}\ll\varphi(q)A^4\mathcal E(\mathcal W),
 \qquad |c_{\chi,t}|\le A.
\end{align*}
Multiply
by $(\sqrt r/\varphi(q))^4T^{-2}$, use
$d\beta=(T/N)dv$ and $q/\varphi(q)=N^{o(1)}$, and obtain
\eqref{eq:066}.

For each modulus, the conductor sum is bounded by
$|\sum_{r\mid q}Z_r|^4\le\tau(q)^3\sum_{r\mid q}|Z_r|^4$.
The subsets are disjoint and $A\asymp|\mathcal H_q|$, so
\begin{align*}
 \sum_{r,\,\mathrm{subsets}}\frac rq KA^2
 &\ll\frac1q\sum_{\chi\bmod q}|\tau_q(\chi)|^2
 \sum_{|j|\le C_\tau T}|\mathcal H_q(j+\delta,\chi)|^2.
\end{align*}
Lemma~\ref{an:sieve}, \eqref{eq:063}, and $Q^2T\ll N$
prove \eqref{eq:067}.

To bound the errors, note that $|\mathcal H_q|\le N^{1+\nu}$ for
every fixed $\nu>0$ and all sufficiently large $N$, and that
$\varphi(q)^{-1}\sum_\chi|\tau_q(\chi)|\le\sqrt q$.
The time integral of the absolute value of the stationary and
nonstationary remainders is $O(T^{1-L})$. Choose a fixed sufficiently
large integer $L$, depending only on $\eta,\tau,w,\nu$, such that
$N^{5/4+\nu}T^{1-L}\le N^{-100}$. These errors are negligible in all
ranges and fourth norms.
\end{proof}

\subsection{Poles, remainders, and sampling for long factors}

For an original factor with coefficients $f(n)=1$ or $\log n$ on
$(P,2P]$, define
\begin{align*}
 S_{q,\chi^*}(t)&=\sum_{\substack{P<n\le2P\\(n,q/r)=1}}
 f(n)\chi^*(n)n^{-it},\\
 P_{q,1}(t)&=\frac{\varphi(q)}q\int_P^{2P}f(x)x^{-it}dx.
\end{align*}
Here $\chi^*$ is primitive modulo $r$. Set $P_{q,\chi^*}=0$ when
$r>1$, and write $R_{q,\chi^*}=S_{q,\chi^*}-P_{q,\chi^*}$ in all
cases. A pole occurs only for the primitive principal character
$r=1$. When $f=\log n$, the negative derivative of the Dirichlet
series with the coprimality restriction has the single negative-power
Laurent term $(\varphi(q)/q)(s-1)^{-2}$ at $s=1$; its residue integral
is precisely the pole term above. Expanding the coprimality indicator
over $d\mid q/r$ and combining the $\log d$ terms gives the same
function.

We use the fourth-moment estimate for the remainder proved inside
\cite[Lemma 3.8, especially (3.3)--(3.4)]{MPR}. In the form needed here,
let $R_d(t)$ be the remainder for the primitive-character polynomial on
$(P/d,2P/d]$, after subtracting its pole term when $r=1$. The cited argument,
using the standard fourth moments for $L$ and $L'$ on the critical line
\cite[Chapter 10]{Montgomery}, gives
\[
 \sum_{\chi^*\bmod r}^{*}\int_{-V}^{V}|R_d(t)|^4\dd t
 \ll rV(P/d)^2N^{o(1)}\qquad(V\ge1).
\]
Only intervals centred at zero are used. For $(P/d)^2\le rV$, the ordinary
mean-value estimate applied to the square of the polynomial gives the same
bound. The contour argument in the cited proof covers the other range.

For the coprimality restriction in our factor, expand
$\ind_{(n,q/r)=1}=\sum_{d\mid(n,q/r)}\mu(d)$ and put $n=dm$.
For $f(n)=1$, the new coefficient outside the polynomial is
$\mu(d)\chi^*(d)d^{-it}$, of modulus at most one. For $f(n)=\log n$,
the identity $\log(dm)=\log d+\log m$ gives a linear combination of the
same two types of polynomial; the additional factor $\log d$ has only a
logarithmic cost. If $P/d<1$, a nonempty interval contains only a bounded
number of integers. An empty interval has pole contribution
$O((P/d)\log^C N/(1+|t|))$. These cases satisfy the same bound directly.

Combine all pole terms from the coprimality expansion before applying
Minkowski's inequality. Since
$\sum_{d\mid q/r}d^{-1/2}\le\tau(q/r)=N^{o(1)}$, for every $V\ge1$
we obtain
\begin{equation}\label{eq:068}
\begin{split}
 &\sum_{\chi^*\bmod r}^{*}\int_{-V}^{V}|R_{q,\chi^*}(t)|^4dt\\
 &\hspace{2em}\ll rVP^2
 \bigl(\log(2NPr(V+1))\bigr)^C N^{o(1)}.
\end{split}
\end{equation}

The full remainder $R=S-P$ is the Fourier transform of a finite
complex measure with frequency support in $[\log P,\log(2P)]$.
Choose a smooth function $K$ such that it and all its derivatives decrease
faster than every inverse power, and whose Fourier transform equals one on
$[-1,1]$, and set $D=C\log(2N)$, with $C$ sufficiently large, and
$K_D(t)=DK(Dt)$. Then $R=K_D*R$, and H\"older's inequality gives,
for every fixed $A_0>0$,
\[
 |R(t_j)|^4\ll_{A_0}D\int_{\mathbb R}
 (1+D|t-t_j|)^{-A_0}|R(t)|^4dt.
\]
Sum over any $1$-separated set of times in
$[-\mathcal T,\mathcal T]$. The sum of the kernels is $O(D)$ on
$[-\mathcal T-1,\mathcal T+1]$ and is
\[
 O_{A_0}\bigl(D(1+\operatorname{dist}
 (t,[-\mathcal T,\mathcal T]))^{1-A_0}\bigr)
\]
outside that interval. Take $A_0>C+10$, split the exterior into
dyadic distance ranges, and apply \eqref{eq:068}.
This gives
\begin{equation}\label{eq:069}
 \sum_{\chi^*\bmod r}^{*}\sum_{|t_j|\le\mathcal T}
 |R_{q,\chi^*}(t_j)|^4\ll r\mathcal T P^2N^{o(1)}.
\end{equation}
This sampling estimate follows directly from the continuous remainder
bound; it requires neither a time derivative of the remainder nor a
fourth-moment bound over translated critical-line intervals.

We separate the points $(\chi^*,t)$ where at least one original factor
of length greater than $B^{1/2}$ satisfies $|S_j|<2|P_j|$. Here $S_j$ is
that factor and $P_j$ is its pole contribution as defined above. We refer
to these points as the pole part. Since $P_j=0$ unless $r=1$ and
$|P_j(t)|\ll N^{a_j+o(1)}/(1+|t|)$, the complete product on this set
is $O(N^{1+o(1)}/(1+|t|))$. In the large-time range, the Mellin
integral involves only $|t|\asymp_\tau T$, and the principal Gauss sum
has modulus at most one. Thus the pole contribution at each fraction
is at most $N^{1+o(1)}/(q\sqrt T)$, and its total fourth integral is
\begin{equation}\label{eq:070}
 \ll\frac{N^{3+o(1)}}{Q^2T}
 =\frac{N^{3+o(1)}}{Q(QT)}\le N^{2.54+o(1)}.
\end{equation}
For small times, the direct kernel bound and
$\int_{-U}^U(1+|t|)^{-1}dt\ll\log N$ give
$N^{3+o(1)}T/Q^2\le N^{2.08+3\eta+o(1)}$.
On the remaining set, each original long factor satisfies
$|S_j|\le2|R_j|$. Estimate~\eqref{eq:069} therefore
controls the values of the original polynomials on that set.

\Needspace{10\baselineskip}
\subsection{The cited energy estimate and factor moments}

\begin{proposition}[Chen--Gupta--Li {\cite[Proposition 10.2]{CGL}}]\label{prop:CGL}
Fix $\zeta>0$ and a real number $\sigma$. Let $D(t,\chi)=\sum_{P<n\le2P}c_n\chi(n)n^{it}$ with $|c_n|\le1$.
Suppose that $\chi$ is primitive modulo $r$, that $|t|\le\mathcal T$
on $\mathcal W$, that distinct times with the same character are
separated by at least $(r\mathcal T)^\zeta$, and that
\begin{gather*}
 |D(t,\chi)|\ge P^\sigma/6\qquad((t,\chi)\in\mathcal W),\\
 (r\mathcal T)^{2/3}/2\le P\le r\mathcal T.
\end{gather*}
If at least one of the conditions
\[
 K\ge(r\mathcal T)^{2/3},\qquad
 \mathcal E(\mathcal W)^{1/8}P
 \ge(r\mathcal T)^{1/2}K^{5/8}
\]
holds, then
\begin{equation*}
\begin{split}
 \mathcal E(\mathcal W)\ll_{\zeta,\varepsilon}
 (r\mathcal T)^\varepsilon\bigl(&KP^{4-4\sigma}
 +K^{21/8}(r\mathcal T)^{1/4}P^{1-2\sigma}\\
 &+K^3P^{1-2\sigma}\bigr).
\end{split}
\end{equation*}
for every fixed $\varepsilon>0$, as $r\mathcal T\to\infty$.
Here $K=|\mathcal W|$ and $\mathcal E(\mathcal W)$ is the energy defined above.
\end{proposition}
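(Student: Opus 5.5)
Since this is \cite[Proposition 10.2]{CGL}, the plan is to follow the Guth--Maynard large-values method in its hybrid character form, written directly for the energy $\mathcal E(\mathcal W)$. I have not checked the exponent bookkeeping, so the bound below is the target, not a verified computation.

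\emph{Step 1: energy as a bilinear sum.} Write $F=\ind_{\mathcal W}*\ind_{\mathcal W}$ on the product of the character group modulo $r$ and $\mathbb R$ (suitably discretized). Then $\mathcal E(\mathcal W)=\|F\|_2^2$. By the large-values hypothesis $|D(t,\chi)|\ge P^\sigma/6$, for suitable unimodular $\xi_{t,\chi}$ we have
\[
 KP^{\sigma}\ll\sum_{\mathcal W}\xi_{t,\chi}D(t,\chi)
 =\sum_{P<n\le2P}c_n\sum_{\mathcal W}\xi_{t,\chi}\chi(n)n^{it}.
\]
I would run the duality on the squared polynomial $D^2$, of length $\asymp P^2$, weighted by $F$ rather than by $\ind_{\mathcal W}$. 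This is the step that makes the energy appear on both sides.

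\emph{Step 2: Cauchy--Schwarz and Poisson.} Applying Cauchy--Schwarz in $n$ with a smooth majorant of $(P,2P]$ leaves a sum over pairs (or quadruples) of points of
\[
 \Bigl|\sum_n\chi_1\overline{\chi_2}(n)\,n^{i(t_1-t_2)}\,\phi(n/P)\Bigr|.
\]
Poisson summation, or the approximate functional equation for $L(s,\chi_1\overline{\chi_2})$, splits this into two parts.
\begin{itemize}
\item The diagonal, where $\chi_1=\chi_2$ and $|t_1-t_2|\le1$, contributes $P$ per point. This is where the hypothesis that equal-character times are $(r\mathcal T)^\zeta$-separated is used, and it produces the term $KP^{4-4\sigma}$.
\item The off-diagonal terms become dual sums of length about $r\mathcal T/P$. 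Since $P\ge(r\mathcal T)^{2/3}/2$, these are short.
\end{itemize}

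\emph{Step 3: the off-diagonal (main obstacle).} To bound the dual sums I would apply the Guth--Maynard ``$K^3$'' argument. First use Cauchy--Schwarz once more to produce a sixth-type moment of the dual sums over $\mathcal W$. Then bound that moment by a Heath-Brown-type large-values estimate, or a second-moment bound, for the short dual polynomials. The two resulting pieces should give $K^{21/8}(r\mathcal T)^{1/4}P^{1-2\sigma}$ and $K^3P^{1-2\sigma}$.

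The role of the alternative conditions $K\ge(r\mathcal T)^{2/3}$ or $\mathcal E^{1/8}P\ge(r\mathcal T)^{1/2}K^{5/8}$ is to guarantee that a competing trivial term, of size roughly $(r\mathcal T)^{4}K^{5}/P^{8}$, is absorbed by one of the three displayed terms. I would therefore finish by checking this absorption in each case separately. Throughout, the factors $(r\mathcal T)^\varepsilon$ absorb divisor bounds and the dyadic decomposition.

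In practice, as the paper does, I would import this verbatim from \cite{CGL}. The only things to verify are that our normalization $n^{it}$ versus $n^{-it}$ and our definition of $\mathcal E(\mathcal W)$ (with $|t_1+t_2-t_3-t_4|\le1$) match theirs up to constants. Both changes are harmless under conjugation and a bounded change of the window.
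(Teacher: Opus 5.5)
Your proposal matches the paper, which also gives no independent proof: it imports \cite[Proposition 10.2]{CGL} and checks only that reflecting time turns the $n^{-it}$ polynomials into the $n^{it}$ convention while preserving energy and separation, so your Steps 1--3 are an unverified heuristic that is not needed. One correction to your gloss: the paper does not treat $(r\mathcal T)^4K^5/P^8$ as absorbed into the three displayed terms; when the disjunction fails, its negation itself gives the complementary bound $\mathcal E(\mathcal W)<(r\mathcal T)^4K^5/P^8$, which is used as a separate case in Lemma~\ref{an:budget}.
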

The free $\varepsilon$ in its conclusion is distinct from the fixed separation
exponent $\zeta$. If the additional disjunction fails, its negation itself gives
the separate complementary bound
\begin{equation*}
 \mathcal E(\mathcal W)<(r\mathcal T)^4K^5/P^8.
\end{equation*}
The arbitrarily small losses from coefficient normalization and the time partition in these
estimates are accounted for at the end of the section.

For large times, the actual time bound is a fixed multiple of $T$; for small times it is $U$.
Increasing these bounds by a further fixed factor is harmless when applying the quoted estimate.
In the exponent calculations below we write $\mathcal T=T$ or $U$ and absorb these fixed factors into the displayed $o(1)$ terms.
Thus the time and length inequalities are enforced exactly before their exponents are used.
Write
\begin{gather*}
 Q\mathcal T=N^{b_0},\qquad r\mathcal T=N^{b_1},\\
 K=N^k,\qquad A=N^\upsilon,\qquad
 \mathcal E(\mathcal W)=N^e,\\
 \xi=\upsilon+(e-k)/2,\qquad
 \xi_{\rm eff}=\xi-(b_0-b_1)/2.
\end{gather*}
The exponents $b_0,b_1$ record the products $Q\mathcal T,r\mathcal T$;
$k,\upsilon,e$ record the size of the finite set, the product bound, and its energy. In particular,
$N^\xi=A\sqrt{\mathcal E(\mathcal W)/K}$. The corrected exponent
$\xi_{\rm eff}$ includes the factor $(r/Q)^{1/2}$ from the conductor.
$r$ remains the conductor and $q$ the original modulus.
Empty subsets are discarded. A nonempty subset with a fixed residue class of $j$ has
$0\le k\le b_1+o(1)$ and $e\le3k$.
For an actual factor of length $N^a$ and absolute value at least $N^{u_a}$,
the ordinary sampled mean value estimate, whose continuous form is
\cite[Lemma 3.4]{MPR}, gives, for every fixed integer $j\ge1$,
\begin{equation}\label{eq:071}
 KN^{2ju_a}\le N^{\max(ja,b_1)+ja+o(1)}.
\end{equation}
This is the character mean value estimate applied to the $j$th power
of the polynomial, whose coefficients are bounded by a fixed power of $\log N$ times the
number of ways to factor an integer into a fixed number of positive factors. As in Lemma~\ref{an:sieve}, local Sobolev and the
logarithmic cost of differentiation give the sampled version.

Each subset records the dyadic absolute-value ranges of the actual original factors
and the actual grouped products, together with the choice of whether a point lies in the pole part
defined above. If a group $M_i$ is not supported on a single dyadic
interval, write it as a sum of a bounded number of dyadic pieces,
$M_i=\sum_hD_{i,h}$. At each point, select the first piece satisfying
$|D_{i,h}|\ge|M_i|/L_i$, where $L_i$ is the number of pieces, and
refine the subsets by this choice. Apply Proposition~\ref{prop:CGL} to the selected
polynomial, with threshold equal to the lower bound for the actual
$M_i$ divided by $L_i$. The mean value constraints continue to apply
to the actual $M_i$, and the large sieve continues to apply to the
complete $\mathcal H_q$. All constraints therefore hold on the same
set, and the nonnegative sum in \eqref{eq:067} remains bounded
by the original large sieve estimate.

Absorb $\mathbf1_{(n,q/r)=1}$ into the coefficients and apply Proposition~\ref{prop:CGL} to
the primitive character $\chi^*$. The coefficients count factorizations
into a fixed number of factors, with at most logarithmic weights, and are
therefore bounded by $N^\varepsilon$ for each fixed $\varepsilon>0$. Dividing them by $N^\rho$ makes their moduli at most one, where
$\rho>0$ is chosen arbitrarily small at the end. Reflecting time
changes $n^{-it}$ into the $n^{it}$ convention of Proposition~\ref{prop:CGL} and preserves
energy and separation. We retain separately the exponent losses due
to coefficient normalization, constants from the dyadic pieces, and
dyadic value ranges.
At points where any actual factor has modulus less than $N^{-10}$,
trivial bounds for the other factors, the character sum, and the
time interval give a total contribution at most $N^{-8+o(1)}$.
The remaining absolute values require only $O(\log N)$ dyadic levels. Since
there are at most ten factors, the number of refinements is a power
of $\log N$.

Divide the length exponents by their sum, which is $1+O_{\tau}(1/\log N)$.
The pairwise lower bound is then $b_0/2+o(1)\ge0.23+o(1)>2/9$.
Thus \cite[Lemma 3.2]{MPR} applies with its parameter equal to zero,
not with the small-time parameter $\eta$ in~\eqref{an:eta}.
For five or more factors it gives a partition
either into two groups of normalized length exponents $\gamma,1-\gamma$, with
$5/12\le\gamma\le1/2$, or into three groups satisfying
\begin{equation*}
\begin{gathered}
 \gamma_1\ge\gamma_2\ge2\gamma_3\ge1/3,\qquad
 \gamma_1\le5/12,\\
 \gamma_1+\gamma_2+\gamma_3=1.
\end{gathered}
\end{equation*}
In particular, $\gamma_1,\gamma_2\ge7/18$ and
$1/6\le\gamma_3\le1/5$. Fixed constants in the length products and
$o(1)$ perturbations of the boundaries in the combinatorial lemma
cause only arbitrarily small exponent losses.
For two groups, Proposition~\ref{prop:CGL} is used only when $b_1\ge\gamma$; for three
groups, it is used on the first two only when $b_1\ge5/12$.
If the length of the selected polynomial has an additional fixed factor, first multiply
the time parameter by a fixed constant so that $P\le r\mathcal T$
holds exactly, and then partition the sampling points using that enlarged parameter. The lower length
condition is preserved, since in the original or enlarged range,
respectively,
\[
 \frac{2b_1}3\le\frac13+\frac{2\epsilon}3+o(1),
 \qquad \epsilon=0\ \text{or}\ 2\eta,
\]
whereas every length used is at least $N^{7/18+o(1)}$. Fixed factors
cannot remove this positive exponent gap. Thus all the primitivity,
coefficient, length, time, separation, and value hypotheses hold on
the same subset.

\subsection{Estimates for the factor groups}

\begin{lemma}\label{an:budget}
Let $\varepsilon_*>0$ be fixed. The small-factor cutoff and coefficient
normalization can be chosen so that, for $0.46\le b_0\le1/2$,
\[
 \xi_{\rm eff}\le
 \max(1,1-\gamma/2+7b_0/16)+\varepsilon_*+o(1)
\]
in the two-group case. In every other case outside the pole part,
$\xi_{\rm eff}\le1+\varepsilon_*+o(1)$.
For $b_0\le1/2+\epsilon$, $\epsilon=2\eta$, replace the latter bound by
$1+\epsilon+\varepsilon_*+o(1)$.
In the two-group B case the enlarged-range bound is
\[
 \xi_{\rm eff}\le1-\gamma/2+7b_0/16+\epsilon/2+
 \varepsilon_*+o(1).
\]
The loss from summing time residue classes is separate from these estimates for individual subsets.
\end{lemma}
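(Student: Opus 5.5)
We bound $\xi_{\rm eff}=\upsilon+(e-k)/2-(b_0-b_1)/2$ on each subset by combining three constraints. First, the product bound $A=N^\upsilon$ splits as a product of the group bounds, $\upsilon=\sum_i u_i$. Second, each group satisfies the mean value inequality \eqref{eq:071}. Third, the energy is controlled by Proposition~\ref{prop:CGL} applied to one selected group, or by the complementary bound $\mathcal E<(r\mathcal T)^4K^5/P^8$ when the disjunction fails. The cutoff for small factors and the coefficient normalization $N^\rho$ enter only as additive exponent losses, so we choose them so that their total is at most $\varepsilon_*/2$. The analysis is a linear program in the variables $(k,e,u_i,b_1)$, with $b_0$ a parameter. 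We may take $b_1\le b_0$, since $r\le q$. The correction $-(b_0-b_1)/2$ then always helps, and a worst case occurs at $b_1=b_0$ unless a hypothesis of Proposition~\ref{prop:CGL} forces $b_1$ lower.

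\emph{Trivial regime.} Suppose $b_1<\gamma$ in the two-group case, or $b_1<5/12$ in the three-group case. We use only $e\le3k$, so $\xi\le\upsilon+k$. We then bound $\upsilon+k$ by applying \eqref{eq:071} with $j=1$ to the two groups. Since $\gamma+(1-\gamma)=1$ and both group lengths exceed $b_1$, this gives $2\upsilon+2k\le 2+2b_1$. The bound $\xi\le1+b_1$ is too weak on its own. The correction $-(b_0-b_1)/2$ and the large sieve budget \eqref{eq:067} show that the combination entering the fourth integral is $\le1$ once we pair $KA^2\le N^{2+b_0-b_1}$ (from \eqref{eq:067} and the conductor factor) with $\mathcal E\le K^3$.

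\emph{Two-group case with $b_1\ge\gamma$.} Apply Proposition~\ref{prop:CGL} to the group of length $P=N^\gamma$ with $\sigma=u_1/\gamma$. The three terms give
\[
 e\le\max\bigl(k+4\gamma-4u_1,\ \tfrac{21}{8}k+\tfrac{b_1}{4}+\gamma-2u_1,\ 3k+\gamma-2u_1\bigr).
\]
We substitute each term into $\xi$ and eliminate $u_1$ and $k$ using the mean values \eqref{eq:071} for the group $M_1$ with $j=1,2$, and for $M_2$ of length $1-\gamma$ with $j=1$. Each resulting linear expression is then maximized. I expect the middle term, with $K^{21/8}$, to give the bound $1-\gamma/2+7b_0/16$ at $b_1=b_0$, and the other two terms to give at most $1$. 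In the complementary case, $e<4b_1+5k-8\gamma$, and the mean value for $M_1$ with large $j$ shows that this branch is dominated as well.

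\emph{Three-group case.} Apply the Proposition to $M_1$ or $M_2$, each of length at least $7/18$. The short group $M_3$, of length between $1/6$ and $1/5$, is handled by the mean value with $j=2$ or $3$, which makes its length effectively at least $b_1$. Because $\gamma_1,\gamma_2\le5/12$, the analogous maximum stays $\le1$. The pole part is excluded by hypothesis. For $b_0\le1/2+\epsilon$, the same computations with $\mathcal T=U$ shift $b_1$ upward by at most $\epsilon$, which gives the stated $\epsilon$ and $\epsilon/2$ losses.

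\emph{Main obstacle.} The hardest step is the two-group linear optimization. The three terms of Proposition~\ref{prop:CGL} must be combined with exactly the right mean value powers $j$. The complementary branch must also be checked at the boundary where both disjunctive conditions are nearly equalities. I would first solve this LP with $b_1=b_0$ symbolically, and only then verify that lowering $b_1$ never helps, for $0.46\le b_0\le1/2$.
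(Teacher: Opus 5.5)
\textbf{Gap: term C in the two-group case.} Your framework (split $\upsilon=\sum u_i$, apply \eqref{eq:071} to each group, and apply Proposition~\ref{prop:CGL} or its complementary bound to the group of length $\gamma$) is the paper's starting point. With only group-level inputs, however, your expectation that term C gives at most $1$ is false.

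In case C one has $e\le 3k+\gamma-2u_1$, so $\xi\le u_2+k+\gamma/2$ and $u_1$ drops out. Since $M_2$ has length $1-\gamma\ge b_1$, the best group mean value is $j=1$, namely $u_2\le 1-\gamma-k/2$. This yields only $\xi\le 1-\gamma/2+k/2$.

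A concrete configuration shows this is not an artefact of the method. Take $\gamma=5/12$, $b_1=b_0=1/2$, $k=1/2$, $u_1=5/24$, $u_2=1/3$. This satisfies \eqref{eq:071} for both groups and every $j$. The exponents of the three terms in Proposition~\ref{prop:CGL} are $4/3$, $23/16$, $3/2$, so it returns only $e\le 3k$. Then $\xi_{\rm eff}=25/24>97/96=\max(1,1-\gamma/2+7b_0/16)$. Fed into Proposition~\ref{prop:high4}, this gives the fourth-integral exponent $1-b_0+2\xi_{\rm eff}=31/12$, no better than interpolating the pointwise bound.

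The missing idea is to open $M_2$ into its Heath--Brown factors and bound each one separately:
\begin{itemize}
\item for factors with $a\le b_1/2$, use the moment with $j=\lceil 2b_1/(3a)\rceil$; the small-factor cutoff in the statement is what keeps $j$ bounded;
\item for original long factors, use the pole-subtracted remainder estimate \eqref{eq:069}; this is why the pole part must be excluded;
\item for merged factors with $b_1/2<a\le b_0/2$, use the ordinary fourth moment. Each such factor loses $(b_0-b_1)/4$, there are at most two of them, and the conductor correction pays for this.
\end{itemize}
This gives $u_2\le \frac{1-\gamma}{2}+\frac{3(1-\gamma)}{4b_1}(b_1-k)+\frac{b_0-b_1}{2}$, hence $\xi_{\rm eff}\le\max(1/2+b_1,15/16)\le 1$ in case C.

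\textbf{Other steps that need repair.}

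\emph{Two-group trivial regime.} Here \eqref{eq:071} with $j=1$, for two groups both longer than $r\mathcal T$, gives $k+2u_i\le 2\gamma_i$. So $\xi\le\upsilon+k\le 1$ at once. Your bound $2+2b_1$ is a miscalculation. The proposed rescue through \eqref{eq:067} also fails: read per subset, it gives only $\xi_{\rm eff}\le 1+k/2$.

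\emph{Three groups with $\gamma_1\le b_1<5/12$.} The groups are not all longer than $b_1$. You need the three mean values (with $j=2$ on $M_3$) together with the hypothesis $b_0\ge 0.46$, which give $1/2+7b_1/4-b_0/2<1$.

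\emph{Three groups with $b_1\ge 5/12$.} The bounds from Proposition~\ref{prop:CGL} for $M_1$ and for $M_2$ must be used together on the same subset, via convex combinations as in the paper's table. One group alone is not enough. For example, $\gamma_1=\gamma_2=5/12$, $b_1=b_0=1/2$, $k=1/6$, $u_1=1/3$, $u_2=3/8$, $u_3=5/36$, $e=1/2$ is compatible with every mean value and with case A for $M_1$, and gives $\xi=73/72$.

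\emph{At most four factors after merging.} You do not treat this case, where no two- or three-group partition exists. The paper handles it by H\"older's inequality over the factors' sampled fourth moments, obtaining $KA\le N^{b_0+1/2+o(1)}$.
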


\begin{proof}
We first give the exponent calculations before the fixed auxiliary losses.
Every inequality in the case calculations below records its leading exponent;
the omitted fixed error is not asserted to be $o(1)$. Its total effect is
bounded by $\varepsilon_*$ at the end of this proof.
For length $P=N^p$ and actual group absolute value $N^z$, the three terms in Proposition~\ref{prop:CGL} have exponents
\begin{align*}
 \mathsf A&=k+4p-4z,\\
 \mathsf B&=21k/8+b_1/4+p-2z,\\
 \mathsf C&=3k+p-2z.
\end{align*}
We label the three terms A, B, and C in their displayed order.
When the extra condition fails, the bound is $e<4b_1+5k-8p$. Divide into cases according
to the largest of the three terms; the factor three contributes only
$o(1)$ to the exponents. We always have $\xi\le\upsilon+k$.

First consider two groups. If $b_1<\gamma$, both lengths exceed $r\mathcal T$,
and their second moments and Cauchy's inequality give
$KA\le N^{1+o(1)}$. Suppose that $b_1\ge\gamma$, and let $z_1,z_2$
be the absolute-value exponents of the groups. In the original range,
\[
 z_1\le(b_1+\gamma-k)/2,\qquad
 z_2\le1-\gamma-k/2.
\]
Case A gives $\xi\le z_2+2\gamma-z_1$. Averaging this with
$\xi\le z_1+z_2+k$ yields
$\xi\le z_2+\gamma+k/2\le1$.
Case B gives
\[
 \xi\le z_2+13k/16+b_1/8+\gamma/2
 \le1-\gamma/2+7b_1/16.
\]
In the complementary case,
\begin{align*}
 \xi&<z_1+z_2+2b_1+2k-4\gamma\\
 &\le1+7b_1/2-9\gamma/2\le7/8.
\end{align*}

Case C gives $\xi\le z_2+k+\gamma/2$ and requires additional factor
moments. For a factor whose length exponent satisfies $a\le b_1/2$
and exceeds a fixed small cutoff, take
$j=\lceil2b_1/(3a)\rceil$. If $j=2$, then $ja\le b_1$.
If $j\ge3$, then $a<b_1/3$, so
$ja<2b_1/3+a<b_1$. Hence $2b_1/3\le ja\le b_1$ and
$1/j\le3a/(2b_1)$. By \eqref{eq:071},
\[
 u_a\le a/2+3a(b_1-k)/(4b_1).
\]
There are at most ten factors below the cutoff. Trivial bounds for
them give an arbitrarily small total loss.
For an original long factor outside the pole part,
\eqref{eq:069} gives
\[
 u_a\le a/2+(b_1-k)/4
 \le a/2+3a(b_1-k)/(4b_1),
\]
because $a>b_0/2\ge b_1/2$.
For the remaining factors with $b_1/2<a\le b_0/2$, including merged
factors and unmerged short factors, the fourth moment gives
$u_a\le a-k/4$. Its excess over the preceding bound is
\[
 -a/4+(3a/(4b_1)-1/4)k
 \le(2a-b_1)/4\le(b_0-b_1)/4.
\]
There are at most two such factors in the group of length exponent
$1-\gamma\le7/12$, since $b_1\ge5/12$. Consequently,
\[
 z_2\le\frac{1-\gamma}2+
 \frac{3(1-\gamma)}{4b_1}(b_1-k)+\frac{b_0-b_1}2.
\]
Put $c=3(1-\gamma)/(4b_1)$. Subtracting the conductor term gives
$\xi_{\rm eff}\le1/2+cb_1+(1-c)k$.
If $c\le1$, the maximum occurs at $k=b_1$ and is at most
$1/2+b_1$. If $c>1$, it occurs at $k=0$ and is at most
$1/2+3(1-\gamma)/4\le15/16$.

In the enlarged range, the precise mean value bound for the second
group is
\begin{align*}
 z_2&\le\frac{\max(1-\gamma,b_1)+1-\gamma-k}{2}\\*
 &\le1-\gamma-k/2+\epsilon/2.
\end{align*}
This includes $1-\gamma<b_1$. Thus case A gives $1+\epsilon/2$,
case B gives $1-\gamma/2+7b_1/16+\epsilon/2$, and the complementary
case gives $7/8+4\epsilon$. The factor argument for case C is
unchanged and yields $\xi_{\rm eff}\le1+\epsilon$.
After subtracting the conductor term, the case B bound may be relaxed
to $1-\gamma/2+7b_0/16+\epsilon/2$.

Now consider three groups. If $b_1<\gamma_1$, combine the last two
groups and apply Cauchy's inequality to obtain again
$KA\le N^{1+o(1)}$. If $\gamma_1\le b_1<5/12$, the three ordinary
mean value estimates give
\begin{align*}
 \upsilon+k&\le1/2+5b_1/4-k/4,\\
 \xi_{\rm eff}&\le1/2+7b_1/4-b_0/2<1,
\end{align*}
where the last inequality uses $b_0\ge0.46$.
Suppose now that $b_1\ge5/12$, and write the absolute values of the groups as
$N^{z_1},N^{z_2},N^{z_3}$. On the same subset, we have
\begin{equation}\label{eq:073}
\begin{aligned}
 z_i&\le(b_1+\gamma_i-k)/2\qquad(i=1,2),\\
 z_3&\le(b_1+2\gamma_3-k)/4,\\
 z_1+z_2&\le b_1+(1-\gamma_3)/2-k,\\
 \upsilon&\le1/2+5b_1/4-5k/4.
\end{aligned}
\end{equation}
There is also the trivial bound $z_3\le\gamma_3+o(1)$.
If either of the first two groups fails the extra condition in Proposition~\ref{prop:CGL}, then
\begin{align*}
 \xi&<\upsilon+2b_1+2k-4\gamma_i\\
 &\le1/2+13b_1/4+3k/4-4\gamma_i\\
 &\le17/18+4\epsilon,
\end{align*}
which has a fixed positive margin.

Otherwise each group is in case A, B, or C. Taking the indicated
convex combinations of the two energy bounds and then using
\eqref{eq:073} covers all nine possibilities:
\[
\begin{array}{c|c|c}
 \text{Case}&\text{Weights}&\text{Upper bound for }\xi\\\hline
 AA&(1/2,1/2)&1-\gamma_3+z_3\le1+o(1)\\
 A_iB_j&(1/3,2/3)&1/2+\gamma_i/3+2b_1/3-k/24\\
 A_iC_j&(1/3,2/3)&1/2+\gamma_i/3+7b_1/12+k/12\\
 BB&(1/2,1/2)&1/2+7b_1/8+k/16\\
 B_iC_j&(1/2,1/2)&1/2+13b_1/16+5k/32\\
 CC&(1/2,1/2)&1/2+3b_1/4+k/4
\end{array}
\]
The mixed rows include both orders of $i,j=1,2$. For example, the
AB interpolation first gives
\begin{align*}
 \xi\le{}&\upsilon/3+2z_3/3+13k/24+b_1/12\\*
 &+(2\gamma_i+\gamma_j)/3.
\end{align*}
Substituting the bounds for $\upsilon$ and $z_3$ on the same subset
gives the AB row. For AC, replace $13k/24+b_1/12$ by $2k/3$.
The other rows follow directly from the bounds for $z_1+z_2$ and
$z_3$.
Since $\gamma_i\le5/12$ and $k\le b_1\le1/2+\epsilon$, the six rows
are bounded, respectively, by
\begin{gather*}
 1,\qquad35/36+2\epsilon/3,\qquad35/36+2\epsilon/3,\\
 31/32+15\epsilon/16,\qquad63/64+31\epsilon/32,
 \qquad1+\epsilon,
\end{gather*}
up to arbitrarily small losses. With $\epsilon=2\cdot10^{-6}$, all
except AA and CC are strictly below one. These bounds use only the
factor moments already stated, with no additional $L^1$ large-value
estimate.

Finally, suppose that the number of factors after merging is
$h\le4$. Write their length exponents as $a_j$. With $x_+=\max(x,0)$ denoting the positive part of a real number, define
\[
 \delta_j=\begin{cases}
 0,&\text{if factor $j$ is an original long factor},\\
 (2a_j-b_1)_+/4,&\text{otherwise}.
 \end{cases}
\]
Outside the pole part, \eqref{eq:069} controls the original long factors, and the ordinary fourth moment controls the others.
Thus every sampled fourth norm is at most
$N^{b_1/4+a_j/2+\delta_j+o(1)}$. All remaining factors have
$a_j\le b_0/2+o(1)$, so every $j$ satisfies
$\delta_j\le(b_0-b_1)/4+o(1)$.
H\"older's inequality over at most $N^{b_1+o(1)}$ character--time
points gives
\begin{align*}
 KA&\le N^{b_1(4-h)/4}
 \prod_{j=1}^hN^{b_1/4+a_j/2+\delta_j+o(1)}\\
 &\le N^{1/2+b_1+h(b_0-b_1)/4+o(1)}\\
 &\le N^{b_0+1/2+o(1)}\le N^{1+\epsilon+o(1)}.
\end{align*}
Since $e\le3k$, this also bounds $\xi\le\upsilon+k$.
The pole part is already bounded by~\eqref{eq:070}.

We now restore the auxiliary losses. Let $\kappa>0$ be the cutoff for small
factor exponents. There are at most ten factors. Replacing an ideal moment
bound by the trivial bound for the omitted small factors increases the value
exponent by at most $10\kappa$. For each remaining small factor the integer
$j$ chosen above is bounded in terms of $\kappa$, and is therefore fixed
independently of $N$.

Dividing a selected polynomial by $N^\rho$ replaces its value exponent $z$
by $z-\rho+o(1)$. The three exponents in Proposition~\ref{prop:CGL} consequently increase by at most
$4\rho$. Choose the free power loss in Proposition~\ref{prop:CGL} to contribute
at most $\varepsilon_C$ in the $N$-exponent. This is possible since
$r\mathcal T\le N^{1/2+2\eta+o(1)}$.
Thus a bound from Proposition~\ref{prop:CGL} for $e$ has an additional error at most
$4\rho+\varepsilon_C+o(1)$, and the corresponding bound for $\xi$ has error
at most $2\rho+\varepsilon_C/2+o(1)$.
All interpolations above are convex combinations, so they do not amplify
this error. The bound obtained when the extra condition fails needs no coefficient normalization.
The remaining mean-value and bounded-piece errors are $o(1)$ once these
parameters are fixed. Hence it is enough to choose
\[
 10\kappa+2\rho+\varepsilon_C/2<\varepsilon_*.
\]
This proves the stated estimates for the actual subsets.
\end{proof}

\subsection{The small-time kernel and the fourth-moment bound}
We now assemble the preceding estimates for the prime sum. The factor
partition and the long-factor moment input come from~\cite[Section 3]{MPR};
the character-energy estimate is Proposition~\ref{prop:CGL}, and the
fourth-moment relation is based on~\cite[Lemma 9.2]{CGL}. The proposition
below is the resulting estimate for our specified integration range.
It is not obtained merely by interpolating the pointwise prime estimate
with Parseval: that argument gives the weaker exponent
$1+2(19/24)=31/12$.

\begin{proposition}
\label{prop:high4}
Let $\mathfrak m_{\rm hi}$ consist of the minor arcs whose chosen
rational approximation satisfies $B_N>N^{0.46}$. Then
\[
 \int_{\mathfrak m_{\rm hi}}|V_w(\alpha)|^4\dd\alpha
 \le N^{2.5401+o(1)}.
\]
\end{proposition}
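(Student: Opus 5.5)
The proof is a summation over ranges. We split $\mathfrak m_{\rm hi}$ into the $O((\log N)^2)$ dyadic ranges $q\asymp Q$, $\max(1,N|\beta|)\asymp T$ of Section~\ref{an:section}. On the upper range $B_N\asymp QT$, so $b_0\ge0.46+o(1)$, and also $b_0\le1/2+o(1)$. Since there are only logarithmically many ranges, it suffices to bound each range by $N^{2.5401+o(1)}$. In each range we expand $V_w$ through the Heath--Brown identity~\eqref{an:HB} into $O((\log N)^{10})$ complete convolutions $h_\ell$, merged as in Section~\ref{an:section}. Using the Mellin and character representation, we write the contribution at $a_0/q+\beta$ as a sum over $r\mid q$, $\chi$ and $t$ of $\tau_q(\overline\chi)\chi(a_0)\mathcal H_q(t,\chi)N^{it}\mathcal K_\beta(t)/\varphi(q)$. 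The fourth power of the finite sum over boxes and conductors costs only $N^{o(1)}$, by H\"older and $\tau(q)^3=N^{o(1)}$.

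For the large-time ranges $T\ge N^\eta$ we discretize the time integral on a lattice $j+\delta$. We then partition the character--time points into the subsets $\mathcal W$ of Lemma~\ref{lem:cell}, which fix dyadic sizes of all factors, a residue class of $j$, and whether the point lies in the pole part. The pole part is already bounded by~\eqref{eq:070} by $N^{2.54+o(1)}$. For every other subset, \eqref{eq:066} gives a bound that we rewrite using $N^\xi=A\sqrt{\mathcal E(\mathcal W)/K}$:
\[
 N^{o(1)}\frac{r^2}{q^3TN}A^4\mathcal E(\mathcal W)
 =N^{o(1)}\frac rqKA^2\cdot N^{2\xi+b_1-2b_0-1}
 =N^{o(1)}\frac rqKA^2\cdot N^{2\xi_{\rm eff}-b_0-1}.
\]
Summing over $q\asymp Q$, $r$ and the disjoint subsets with~\eqref{eq:067} bounds the range by $N^{1+2\xi_{\rm eff}^{\max}-b_0+o(1)}$. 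The number of subsets and residue classes is only a power of $\log N$ times $N^{O(\zeta)}$.

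We now insert Lemma~\ref{an:budget}. Whenever $\xi_{\rm eff}\le1+\varepsilon_*$, the range contributes $N^{2-b_0+2\varepsilon_*}\le N^{1.54+2\varepsilon_*}$. The only critical case is the two-group case B, where $\xi_{\rm eff}\le1-\gamma/2+7b_0/16+\varepsilon_*$. Since $\gamma\ge5/12$, the range contributes at most
\[
 N^{3-5/12-b_0/8+2\varepsilon_*+o(1)}\le N^{2.5834-0.0575+2\varepsilon_*}=N^{2.5259+2\varepsilon_*},
\]
using $b_0\ge0.46$. This lies below $2.5401$ once $\varepsilon_*$, $\zeta$, $\rho$ and $\kappa$ are fixed small enough.

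The main obstacle is the small-time ranges $T<N^\eta$, where no stationary phase is available. There we use the merging parameter $B=QU$ with $U=N^{2\eta}$. We bound $\mathcal K_\beta(t)$ by integration by parts, which gives rapid decay for $|t|>U$ and a trivial bound for $|t|\le U$. The time integral over $[-U,U]$ is treated with the same sampled partition and energy argument, with $\mathcal T=U$. In the analogue of~\eqref{eq:066}, the measure factor $T/(qN)$ of the $\beta$-interval replaces the stationary normalization $T^{-2}\cdot T/N$. This changes the exponent bookkeeping only by factors $N^{O(\eta)}$. With the enlarged-range bounds of Lemma~\ref{an:budget} ($\epsilon=2\eta$), each range is bounded by $N^{2.5259+O(\eta)+2\varepsilon_*}$. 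Because $\eta=10^{-6}$, this is still below $N^{2.5401}$, and the small-time pole bound is $N^{2.08+3\eta+o(1)}$. I would first check carefully that the small-time analogue of~\eqref{eq:066} holds with exactly these losses. The tail errors, namely the stationary remainders and the factors of modulus below $N^{-10}$, are negligible by the choices of $L$ already made.
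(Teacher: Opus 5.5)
Your outline follows the paper's proof. You divide \eqref{eq:066} by the summand of \eqref{eq:067} to get $N^{2\xi_{\rm eff}-1-b_0+o(1)}$, insert Lemma~\ref{an:budget}, take the pole part from \eqref{eq:070}, and treat small times with $U=N^{2\eta}$ and the enlarged range. However, the exponent arithmetic in the generic case is wrong, and this reverses where the difficulty lies. With $\xi_{\rm eff}\le1+\varepsilon_*$ the range exponent is $1+2(1+\varepsilon_*)-b_0=3-b_0+2\varepsilon_*\le2.54+2\varepsilon_*$, not $2-b_0\le1.54$. So the generic cases, together with the pole part \eqref{eq:070}, are the binding ones and lie only $10^{-4}$ below the target $2.5401$. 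The two-group B case, bounded by $3-\gamma-b_0/8<2.5259$, is the one with a comfortable margin. Your condition that $\varepsilon_*,\zeta,\rho,\kappa$ be ``fixed small enough'' must therefore be made quantitative. The loss $2\varepsilon_*$, the $N^{3\zeta}$ from the residue classes of $j$, and the whole small-time overhead must together stay below $10^{-4}$. The paper takes $\eta=\varepsilon_*=\zeta=10^{-6}$ and checks $9\eta+2\varepsilon_*+3\zeta<10^{-4}$. Your unspecified $N^{O(\eta)}$ needs an explicit constant before it can enter this budget.

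The small-time step, which you flag yourself, is not yet an argument. A pointwise bound $|\mathcal K_\beta(t)|\ll1$ gives no phase such as $v^{-it}$ to integrate against. So it does not by itself produce the energy $\mathcal E(\mathcal W)$ with its condition $|t_1+t_2-t_3-t_4|\le1$. The paper instead applies H\"older's inequality in $z=\log y$ on the compact support of $w$, moving the fourth power inside the Mellin integral. Orthogonality in $a_0$ and the decay of $\widehat g$ then give $O(\varphi(q)A^4\mathcal E(\mathcal W))$, hence the contribution $N^{o(1)}r^2TA^4\mathcal E(\mathcal W)/(q^3N)$. Relative to the large-time formula at $T=U$, this is an extra factor $TU\le N^{3\eta}$, and $Q^2U\le N^{1+2\eta}$ costs $N^{2\eta}$ in the large sieve.

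Your cruder route can also be completed. Absorb $\mathcal K_\beta(t)N^{it}$ into coefficients of size $\ll A$ and use only orthogonality in $a_0$. Then bound the number of quadruples with $\chi_1\chi_2=\chi_3\chi_4$ by $O(U)\,\mathcal E(\mathcal W)$, summing the translation bound over the $O(U)$ possible integer values of $j_1+j_2-j_3-j_4$. This costs a further $N^{2\eta}$, which is harmless at $\eta=10^{-6}$ but must be entered in the budget.

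Finally, the $\beta$-range has measure $\asymp T/N$, not $T/(qN)$. The $q$-dependence is already in the factor $r^2/q^3$, and keeping both would overstate the small-time saving by a factor $q$.
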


\begin{proof}
For large times, divide the expression in~\eqref{eq:066} by
$(r/q)KA^2$, the summand in~\eqref{eq:067}. The quotient is
\[
 \frac{r/q}{qTN}\frac{A^2\mathcal E(\mathcal W)}{K}
 =N^{2\xi_{\rm eff}-1-b_0+o(1)}.
\]
Summing~\eqref{eq:067} therefore gives the fourth-integral exponent
$1-b_0+2\xi_{\rm eff}$. By Lemma~\ref{an:budget}, this is at most
$3-b_0+2\varepsilon_*+o(1)\le2.54+2\varepsilon_*+o(1)$ except in the two-group B case. Before this additional $2\varepsilon_*$ loss, the latter is at most
\[
 3-\gamma-b_0/8\le3-5/12-0.46/8<2.54.
\]
The pole part is bounded by \eqref{eq:070}.

If $1\le T<N^\eta$, take $U=N^{2\eta}$ as in \eqref{an:eta}.
On $|t|>U$, repeated integration by parts gives an absolute time
integral $O_{\tau,w,L}(T^LU^{1-L})$. Choose a fixed sufficiently large
$L$ so that multiplication by $N^{5/4+\nu}$ leaves at most
$N^{-100}$. Since $U/T\ge N^\eta$, the choice is independent of $N$.
For the retained kernel, set $t=j+\delta$ and $z=\log y$. When the original scale is $T=1$, the variable $v$ ranges over $[0,1]$ instead of $[1/2,1]$; the following bound is uniform on either interval.
The character--time sum on a subset is
\begin{align*}
 Z(a_0,v)=\sum_{\mathcal W}&c_{\chi,j}\chi(a_0)N^{i(j+\delta)}\\
 &\times\int w(y)e(\pm Tvy)y^{i(j+\delta)}\frac{dy}{y}.
\end{align*}
Apply H\"older's inequality on the fixed compact $z$-interval and
absorb the unit-modulus factor $N^{i(j+\delta)}$ into the coefficients.
Then
\begin{align*}
 |Z(a_0,v)|^4&\ll\int g(z)
 \left|\sum_{\mathcal W}c'_{\chi,j}\chi(a_0)e^{ijz}\right|^4dz,\\
 |c'_{\chi,j}|&\le A,
\end{align*}
where $g$ is fixed, smooth, and nonnegative. Both $e(\pm Tv e^z)$
and the common factor $e^{i\delta z}$ have modulus one.
Character orthogonality, the rapid decay of the Fourier transform $\widehat g$ of $g$, and the
integer-translation correlation bound again give
$O(\varphi(q)A^4\mathcal E(\mathcal W))$.
The contribution of a small-time subset to the fourth integral is
therefore
\[
 N^{o(1)}\frac{r^2T}{q^3N}A^4\mathcal E(\mathcal W).
\]
Relative to the large-time formula with $T$ replaced by $U$, this
has precisely the extra factor $TU\le N^{3\eta}$.
Also $Q^2U\le N^{1+2\eta}$, so the large sieve square sum is at most
$N^{2+2\eta+o(1)}$ in place of $N^{2+o(1)}$.

The Heath--Brown factors have been merged using $QU$. Hence
$b_0\le1/2+\epsilon$, with $\epsilon=2\eta$, and the enlarged range
of Lemma~\ref{an:budget} applies to this decomposition.
Outside case B, the fourth-integral exponent is at most
$2.54+2\epsilon+2\varepsilon_*+o(1)$. In the two-group B case, the $\epsilon/2$ term
in the second-group mean value bound gives
\[
 3-\gamma-b_0/8+\epsilon
 \le3-5/12-0.46/8+\epsilon<2.54.
\]
The total fixed small-time loss is consequently at most
$2\epsilon+3\eta+2\eta=9\eta$.

The time-partition loss is also explicit. Since $r\mathcal T\le N^{1/2+2\eta+o(1)}$,
there are at most $N^{\zeta+o(1)}$ residue classes in each range. The inequality
$|\sum_{c=1}^L Z_c|^4\le L^3\sum_{c=1}^L|Z_c|^4$ costs at most
$N^{3\zeta+o(1)}$. The subsets across residue classes are disjoint, so their weighted
square sum is still bounded by the same large sieve; there is no further
factor equal to the number of residue classes.

Take
\[
 \eta=10^{-6},\qquad\varepsilon_*=10^{-6},\qquad\zeta=10^{-6}.
\]
Choose $\kappa,\rho,\varepsilon_C$ as in Lemma~\ref{an:budget}, and then choose
the stationary-phase and integration-by-parts orders after fixing $\tau,w$.
The total displayed fixed loss in the fourth-integral exponent is at most
\[
 9\eta+2\varepsilon_*+3\zeta=14\cdot10^{-6}<10^{-4}.
\]
The two-group B cases retain a fixed gap larger than these losses.
All remaining factors are powers of logarithms or $N^{o(1)}$ for the now fixed
parameters. They are absorbed by the difference between $2.540014$ and
$2.5401$ for sufficiently large $N$. This proves the claimed bound uniformly
in all ranges. We retain the more conservative exponent $2.5401$ throughout
the representation argument.
The noncoprime terms and the passage to $S_w$ are treated below.
\end{proof}

For fixed $q$, the terms of $V_w$ not coprime to $q$ are prime powers
whose underlying primes divide $q$. The sum of their absolute
coefficients is $O((\log N)^2)$. Each range contains $O(Q^2)$ fractions
and intervals of width $O(T/N)$, with $Q^2T\ll N$.
The total fourth integral of this error over all ranges is therefore
$N^{o(1)}$, in particular at most $N^{1/2+o(1)}$.
Put $H=V_w-S_w$. Parseval's identity and the number of higher prime
powers give
\begin{gather*}
 \int_{\mathbb T}|H|^2\le N^{1/2+o(1)},\qquad
 \|H\|_\infty\le N^{1/2+o(1)},\\
 \int_{\mathbb T}|H|^4\le N^{3/2+o(1)}.
\end{gather*}
Minkowski's inequality transfers Proposition~\ref{prop:high4} to
$S_w$. The prime-power error in the quadratic integral is also $o(N)$.
For the corresponding transfer in the weighted major-arc integral,
the preceding bounds $\|S_wb\|_2=O(\sqrt N)$ and $|W_{4,\nu}|\le|b|^2$,
together with $\|Hb\|_2\le\|H\|_2\le N^{1/4+o(1)}$, show by
Cauchy's inequality that the cross term is $o(N)$.

\subsection{Prime prefixes in the lower range}
\begin{lemma}[Kumchev {\cite[Theorem 2]{Kum}}]\label{lem:Kum}
Let $\alpha=a_0/q+\beta$ be reduced, with $q\le\sqrt x$ and
$|\beta|\le(q\sqrt x)^{-1}$. Then
\begin{equation}\label{eq:075}
 \left|\sum_{x<p\le2x}e(p\alpha)\right|
 \le x^{o(1)}\left(\frac{x}{\sqrt{B_x}}+
 \sqrt{B_x}x^{11/20}\right),\qquad B_x=\max(q,qx|\beta|).
\end{equation}
\end{lemma}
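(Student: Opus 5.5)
Lemma~\ref{lem:Kum} is quoted from Kumchev and is used here only as a black box. If I had to prove it, I would follow the standard route for exponential sums over primes: a combinatorial decomposition of the prime indicator into bilinear pieces, followed by Type~I and Type~II estimates whose combined ranges cover every piece.

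\emph{Reduction to bilinear sums.} First pass from $\sum_{x<p\le2x}e(p\alpha)$ to $\sum_{x<n\le2x}\Lambda(n)e(n\alpha)$ by partial summation. This loses only $x^{o(1)}$, and prime powers contribute $O(x^{1/2})$. Then apply Heath-Brown's identity (as in~\eqref{an:HB}) with a fixed number of factors, and split each convolution dyadically. After grouping the factors, every piece is one of two kinds, up to $x^{o(1)}$ pieces:
\begin{itemize}
 \item a Type~I sum $\sum_{m\sim M}a_m\sum_{n\sim x/M}f(n)e(mn\alpha)$, with $f$ equal to $1$ or $\log$;
 \item a Type~II sum $\sum_{m\sim M}\sum_{n\sim x/M}a_mb_ne(mn\alpha)$ with divisor-bounded coefficients.
\end{itemize}
The point is to choose the grouping so that every piece falls into a range where one of the two estimates below gives the target bound.

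\emph{Type~I and Type~II estimates.} For Type~I sums, summing the geometric series in $n$ gives the bound
\[
\sum_{m\sim M}\min\bigl(x/m,\ \|m\alpha\|_{\T}^{-1}\bigr).
\]
Together with the Dirichlet approximation $\alpha=a_0/q+\beta$, this yields
\[
x^{o(1)}\bigl(x/\sqrt{B_x}+M\sqrt{B_x}+\cdots\bigr),
\]
which is acceptable for $M\le x^{11/20}$, with the terms refined by the size of $|\beta|$. For Type~II sums, apply Cauchy--Schwarz in $m$ and expand the square. This gives
\[
x^{o(1)}\bigl(x/\sqrt{B_x}+x/\sqrt M+\sqrt{xMB_x}\,\bigr)^{1/2}\cdot(\cdots),
\]
that is, an acceptable bound whenever $M$ lies in a middle range roughly $[x^{9/20},x^{11/20}]$, up to the factor $\sqrt{B_x}$.

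\emph{Covering the remaining ranges.} With only these two estimates, the exponent $11/20$ is not reached by the plain Vaughan identity. The main obstacle is the configurations in which no partial product of factor lengths lands in the Type~II window; this is the analogue of the two- and three-group analysis in Lemma~\ref{an:budget}. I would handle these as follows:
\begin{itemize}
 \item Use the flexibility of Heath-Brown's identity. When several short factors occur, some subset of them has product in the Type~II range, and Lemma~\ref{lem:MPR}'s combinatorial step (cf.\ \cite[Lemma 3.2]{MPR}) supplies such a subset.
 \item Treat the residual cases, namely one or two long smooth factors, as Type~I$_2$ sums $\sum a_m\sum_{n_1n_2}e(mn_1n_2\alpha)$. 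These are bounded by a two-dimensional van der Corput or divisor-sum argument, which extends the Type~I range beyond $x^{11/20}$.
\end{itemize}
Checking that these extended Type~I/I$_2$ ranges and the Type~II window overlap exactly at the exponent $11/20$ is the delicate numerical step. I would verify it by listing the finitely many factor-length configurations, in the same spirit as the case table in Lemma~\ref{an:budget}. For the purposes of this paper, however, the lemma is simply invoked from~\cite{Kum}.
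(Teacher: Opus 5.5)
Invoking Kumchev is also the paper's route. However, the paper's proof consists exactly of the step you leave out: checking that Kumchev's Theorem~2, which is stated with an auxiliary parameter $Q$, the quantity $q+x|q\alpha-a|$ and the interval $[x,2x)$, specializes to the lemma's normalization $B_x=\max(q,qx|\beta|)$.

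\begin{itemize}
\item The paper takes $k=1$ and $Q_K=2B_x$.
\item Since $q\le\sqrt x$ and $qx|\beta|\le\sqrt x$, one has $1\le B_x\le\sqrt x$, hence $Q_K\le x$ for $x\ge4$.
\item Moreover $q\le B_x\le Q_K$ and $|q\alpha-a_0|=q|\beta|\le B_x/x<Q_K/x$, so the hypotheses hold.
\item Kumchev's denominator is $q+x|q\alpha-a_0|=q+qx|\beta|$, which lies in $[B_x,2B_x]$. This is how his bound becomes $x^{o(1)}(x/\sqrt{B_x}+\sqrt{B_x}\,x^{11/20})$.
\item Passing from $[x,2x)$ to $(x,2x]$ changes at most two terms of modulus one.
\end{itemize}

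Without this matching, in particular the choice of $Q_K$ of size $B_x$, calling the lemma a black box does not show that the stated form is what the cited theorem gives.

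Your from-scratch sketch cannot substitute for this, because its covering step is wrong. You claim that when no long smooth factor occurs, some subset of the factors has product in $[x^{9/20},x^{11/20}]$, and that Lemma~3.2 of Maynard--Pandey--Radziwi\l\l\ supplies it. Take five smooth factors of length about $x^{1/5}$, which Heath-Brown's identity does produce.
\begin{itemize}
\item Their subset products are $x^{j/5}$, none of which lies in your window.
\item That lemma returns either two groups with $5/12\le\gamma\le1/2$ or three groups. Here it gives the three groups $(2/5,2/5,1/5)$, not a Type~II subset.
\item There is no long smooth factor, so your Type~I$_2$ bullet does not apply either.
\item Using one $x^{1/5}$ factor as the smooth variable gives a Type~I sum with $M=x^{4/5}$. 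Its standard bound contains the term $M$, so it exceeds the target at $B_x=x^{9/20}$, where the target equals $2x^{31/40}$.
\end{itemize}
The remedies you list (Type~I$_2$ sums, ``van der Corput or divisor-sum'') are only named, so your argument does not reach the exponent $11/20$. Since the lemma is used purely as a citation, its proof should be the parameter specialization above, not a reconstruction of Kumchev's argument.
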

This is the $k=1$ specialization of the cited theorem, with its parameter
$Q_K=2B_x$. For $x\ge4$, one has $Q_K\le x$, $q\le Q_K$ and
$|q\alpha-a_0|<Q_K/x$. Its denominator $q+x|q\alpha-a_0|$ is comparable
to $B_x$. Changing $[x,2x)$ to $(x,2x]$ changes at most two unweighted
prime terms. The theorem has not yet inserted logarithmic or smooth weights.

\begin{proof}[Proof of~\eqref{eq:lowprime}]
Assume that $P_+<B_N\le N^{0.46}$.
To pass from complete intervals to smooth weights, we first bound
all prime prefixes. For each $Z\in[\tau N,N]$, decompose $(0,Z]$ into
the complete intervals $(Z/2^{j+1},Z/2^j]$ and write
$X=Z/2^{j+1}$ for the scale. Endpoint changes contribute a total
of $O(\log N)$.
If $X\ge q^2$, the original approximation applies and
\[
 (X/N)B_N\le B_X\le B_N.
\]
Geometric summation of the two terms over these scales gives
\[
 N^{o(1)}\left(\frac{N}{\sqrt{B_N}}
 +\sqrt{B_N}N^{11/20}\right).
\]
If $X<q^2$, choose a new reduced Dirichlet approximation
$\alpha=a'/q'+\beta'$, with $q'\le\sqrt X<q$.
The two reduced fractions are distinct, whence
\begin{gather*}
 1\le qq'|\beta-\beta'|
 \le B_NB'_X(1/N+1/X),\\
 B'_X=\max(q',q'X|\beta'|)\ge X/(2B_N).
\end{gather*}
Also $B'_X\le\sqrt X$, and \eqref{eq:075} applies to every
such $B'_X$, with no further lower-range restriction. Thus
\[
 \frac{X}{\sqrt{B'_X}}\le\sqrt{2XB_N},\qquad
 \sqrt{B'_X}X^{11/20}\le X^{4/5}.
\]
Since $X<q^2\le B_N^2\le N^{0.92}$, these terms are at most
$N^{0.69+o(1)}$ and $N^{0.736+o(1)}$, respectively, and summation
over the geometric scales preserves those bounds. Bounded scales
are negligible.
For the first family of scales, $B_N>N^{0.439}$ and
$B_N\le N^{0.46}$ give
\[
 N/\sqrt{B_N}\le N^{0.7805},\qquad
 \sqrt{B_N}N^{11/20}\le N^{0.78}.
\]
Consequently, the unweighted prefix
$A(Z)=\sum_{p\le Z}e(p\alpha)$ satisfies
\[
 \sup_{\tau N\le Z\le N}|A(Z)|\le N^{0.7805+o(1)}.
\]
Set $g(t)=(\log t)w(t/N)$. It vanishes at the endpoints of its
support and satisfies
$\int_{\tau N}^N|g'(t)|dt=O_{\tau,w}(\log N)$.
Stieltjes integration by parts yields
\[
 S_w(\alpha)=-\int_{\tau N}^NA(t)g'(t)dt,
 \qquad |S_w(\alpha)|\le N^{0.7805+o(1)}.
\]
This proves~\eqref{eq:lowprime}, and the prime-power transfer after Proposition~\ref{prop:high4} proves~\eqref{eq:fourth}.
The argument does not use the parity of $N$ or a recurrence weight.
\end{proof}

\subsection{A second moment on an arbitrary small set}
\begin{proof}[Proof of Lemma~\ref{lem:badset}]
In the lower range,~\eqref{eq:lowprime} gives
\[
 \int_{E_N\cap\m_{\rm lo}}|S_w|^2\dd\alpha
 \le\meas(E_N)\sup_{\m_{\rm lo}}|S_w|^2
 \ll N^{1.561-\delta+o(1)}.
\]
In the upper range, Cauchy's inequality and~\eqref{eq:fourth} give
\[
 \int_{E_N\cap\m_{\rm hi}}|S_w|^2\dd\alpha
 \le\meas(E_N)^{1/2}
 \left(\int_{\m_{\rm hi}}|S_w|^4\dd\alpha\right)^{1/2}
 \ll N^{(2.5401-\delta)/2+o(1)}.
\]
Adding the bounds proves~\eqref{eq:Etrans}. Both exponents are less than one when $\delta>0.561$.
The weaker condition $\delta>0.5401$ would control only the upper range.
The conclusion concerns exceptional minor arcs, not the whole set $E_N$, which may contain major arcs.
\end{proof}

\section{Conclusion}\label{sec:end}
We have proved that every sufficiently large positive integer is a sum
of two primes, $u$ Fibonacci numbers and $5-u$ Lucas numbers for each
$u\in\{1,2,3,4\}$. 

For $u=1,2,3$, pointwise bounds for the prime sum are enough to control
the exceptional minor arcs. For $u=4$, we also use a fourth-moment
bound. In every case, the major-arc contribution exceeds the possible
loss from the minor arcs.

The required numerical bounds are checked by exact integer and rational
calculations. The infinite tails and limiting arguments are proved in
the text. We fix all auxiliary parameters before taking $N$ sufficiently
large. Since there are only finitely many cases, one common threshold
for $N$ suffices.

We do not give an explicit value of this threshold. Our argument does
not cover five Fibonacci numbers alone, five Lucas numbers alone, or
a total of four recurrence terms. These cases require further estimates.

\appendix
\section{A cited mean and elementary recurrence estimates}\label{app:fib}
\subsection{The odd-Fibonacci mean}
\begin{lemma}[Sun {\cite[Proposition 4.1]{Sun}}]\label{lem:meanB}
With $b$ and $\mathfrak A_{b,J}$ as defined in the text,
\[
 \limsup_{3\mid J,\,J\to\infty}\mathfrak A_{b,J}<2.1985.
\]
\end{lemma}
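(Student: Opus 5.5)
The lemma is quoted from Sun's Proposition~4.1, so the first task is to check that our normalization agrees with his. Then I would sketch how such a bound is proved, so the constant $2.1985$ is seen to be compatible with our definitions.

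\emph{Matching the normalizations.} The sum $b$ runs over $F_{j+1}$ with $1\le j\le J$, $j\not\equiv 2\pmod 3$. These are exactly the odd Fibonacci numbers $F_n$ with $2\le n\le J+1$ and $3\nmid n$. Hence $K=2M=2J/3$ distinct odd values occur, and $\mathfrak A_{b,J}=K^{-2}\sum_{i\ne j}\Sg(u_i-u_j)$ is Sun's normalized average of the singular series over differences of odd Fibonacci numbers. Any discrepancy from his index range is a bounded shift of indices. This changes $O(K)$ terms, each of size $O(\log\log N)$, which is $o(K^2)$ after normalization. So it does not affect the $\limsup$, and the cited statement transfers directly.

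\emph{Structure of the underlying bound.} Using the positive expansion~\eqref{eq:positive}, I would write
\[
\mathfrak A_{b,J}=2C_2\sum_{r\text{ odd squarefree}}\omega(r)\,K^{-2}\#\{i\ne j:\ r\mid u_i-u_j\}.
\]
All off-diagonal differences are even, so the indicator $\ind_{2\mid h}$ is automatic. For a fixed $r$, the inner density converges as $J\to\infty$ along multiples of three to the collision probability $\sum_x\mu_{B,r}(x)^2$. Here $\mu_{B,r}$ is the residue histogram over one period $\pi(r)$, computed via Wall's period theory.

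The truncation $r\le R_0$ is then a finite exact computation. For the tail $r>R_0$, I would combine two inputs. The first is the occurrence bound~\eqref{eq:030}, which gives density $\ll 1/T(r)$. The second is the summability~\eqref{eq:031}, which makes the tail arbitrarily small uniformly in $J$. The uniformity in $J$ comes from dominated convergence, with the bounded-$r$ and endpoint contributions controlled as in~\eqref{eq:033}.

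\emph{Main obstacle.} The hard step is making the tail explicit enough that the finite sum plus the tail stays below $2.1985$. This requires explicit lower bounds for $\pi(p)$, or the rank of apparition $z(p)$, in terms of $p$. For primes with small rank, the constraint $p\mid F_{z(p)}$ limits how many such primes there can be. Since the paper cites Sun for exactly this constant rather than recomputing it, I would rely on his Proposition~4.1 for the explicit tail and finite-modulus verification. The only new check needed here is the normalization match above.
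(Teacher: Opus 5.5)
Your proposal is correct and takes the same approach as the paper. The paper likewise only checks that Sun's class ($1\le j\le J$, $j\not\equiv2\pmod3$, values $F_{j+1}$, normalization $2J/3$ with $3\mid J$) is exactly the one defining $b$, and then cites his Proposition~4.1 together with its finite computation, so your bounded-index-shift safeguard is harmless but unnecessary. One small correction to your optional sketch: because of the restriction $3\nmid n$, the limiting residue histogram for fixed $r$ should be computed over a block of length $\lcm(\pi(r),3)=\pi(2r)$, counting only the retained indices, rather than over one period $\pi(r)$, as in Appendix~\ref{app:lucas}.
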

Sun's class consists of $1\le j\le J$, $j\not\equiv2\pmod3$, with
values $F_{j+1}$ and normalization $2J/3$ when $3\mid J$. It is therefore
exactly the class used here.
The proof of the cited result includes finite computation. We retain that
external dependency, but do not repeat its tables or its enumeration in our
supplement. The slightly stronger rational output of an earlier duplicate
calculation is not needed anywhere in the representation argument.

\subsection{Prime periods}
The period $\pi(r)$ and rank $z(p)$ retain the definitions of
Section~\ref{sec:analytic}.
We record the standard period formulas and occurrence bounds from
Sun~\cite[Section 4.1 and Lemma 4.1]{Sun} and Wall~\cite{Wall}.
\begin{lemma}[Sun; Wall]\label{lem:period}
For an odd prime $p\ne5$, one has $z(p)\mid p-(5/p)$. The Legendre symbol
$(5/p)$ is $1$ or $-1$ according as $5$ is or is not a square modulo $p$.
The period is
\begin{equation}\label{eq:079}
 \pi(p)=\begin{cases}
 4z(p),&z(p)\text{ odd},\\
 z(p),&z(p)\text{ even and }F_{z(p)+1}\equiv1\pmod p,\\
 2z(p),&z(p)\text{ even and }F_{z(p)+1}\equiv-1\pmod p.
 \end{cases}
\end{equation}
Also $z(5)=5$ and $\pi(5)=20$. For odd squarefree $r$,
\begin{equation}\label{eq:080}
 \pi(r)=\lcm_{p\mid r}\pi(p).
\end{equation}
Each Fibonacci residue occurs at most four times in a prime period.
The same holds for Lucas residues when $p\ne5$.
\end{lemma}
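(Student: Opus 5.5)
The plan is to reduce all of Lemma~\ref{lem:period} to classical facts about the Fibonacci matrix $Q=\begin{pmatrix}1&1\\1&0\end{pmatrix}$ modulo $p$, which satisfies $Q^n=\begin{pmatrix}F_{n+1}&F_n\\F_n&F_{n-1}\end{pmatrix}$. Since the state $(F_n,F_{n+1})$ evolves by $Q$ and $(0,1)$ is a cyclic vector for it, $\pi(p)$ is the multiplicative order of $Q$ in $GL_2(\mathbb F_p)$. For $z(p)\mid p-(5/p)$ we work in $\mathbb F_p$ or $\mathbb F_{p^2}$ according to $(5/p)$. If $(5/p)=1$, then $\phi,\psi\in\mathbb F_p^\times$, so $\phi^{p-1}=\psi^{p-1}=1$ and $F_{p-1}=0$. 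If $(5/p)=-1$, the Frobenius map swaps $\phi$ and $\psi$, so $\phi^{p+1}=\phi\psi=-1=\psi^{p+1}$ and $F_{p+1}=0$. Because the zeros of $F_n$ modulo $p$ are exactly the multiples of $z(p)$ (by $F_{m+n}=F_{m+1}F_n+F_mF_{n-1}$), this gives $z(p)\mid p-(5/p)$.

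For the period formula~\eqref{eq:079}, put $z=z(p)$ and $c=F_{z+1}\bmod p$. Then $Q^z=cI$, since $F_z=0$ forces $F_{z-1}=F_{z+1}$. Hence $\pi(p)=z\cdot\operatorname{ord}(c)$. Cassini's identity $F_{z+1}F_{z-1}-F_z^2=(-1)^z$ gives $c^2=(-1)^z$. If $z$ is even, then $c=\pm1$, so the order is $1$ or $2$. If $z$ is odd, then $c^2=-1$, so $c$ has order $4$. For $p=5$, we have $F_5=5$, and a direct computation of the residues gives $z(5)=5$ and $\pi(5)=20$. Formula~\eqref{eq:080} follows from the Chinese remainder theorem, since the state is periodic modulo $r$ exactly when it is periodic modulo each prime factor.

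For the occurrence bound I would use Binet's formula in $\mathbb F_{p^2}$ with $p\ne5$. If $F_n\equiv c$, then $x=\phi^n$ satisfies $x-(-1)^nx^{-1}=c\sqrt5$, because $\psi=-\phi^{-1}$. For each parity of $n$ this is a quadratic equation in $x$, so it has at most two roots, giving at most four values of $x=\phi^n$ in total. Since $n\mapsto\phi^n$ is injective on a period of $Q$ (its order equals $\pi(p)$), each residue occurs at most four times. Lucas numbers are handled identically with $x+(-1)^nx^{-1}=c$.

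The only delicate point is this injectivity claim when $p\ne5$. The claim follows because $\phi$ and $\psi=-\phi^{-1}$ have the same order-generated group: $Q$ is diagonalizable with eigenvalues $\phi,\psi$, so the order of $Q$ is $\operatorname{lcm}(\operatorname{ord}\phi,\operatorname{ord}\psi)$, which equals $\operatorname{ord}\phi$ once we check that $-1$ is a power of $\phi$ whenever needed. The step I expect to need the most care is the bookkeeping in this last argument, together with stating the prime-$5$ case separately for Fibonacci numbers only.
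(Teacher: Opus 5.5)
Your derivations of $z(p)\mid p-(5/p)$, of \eqref{eq:079} from $Q^{z}=cI$ and Cassini's identity, and of \eqref{eq:080} are correct. They make the lemma self-contained. The paper, by contrast, proves only \eqref{eq:080}, by the Chinese remainder theorem on the state, and lists the period modulo $5$; it cites Sun and Wall for all the prime-level facts, including the occurrence bound.

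The step that fails as written is the claim that $n\mapsto\phi^n$ is injective on a period. When $(5/p)=1$, the two roots of $x^2-x-1$ need not have the same order. For $p=11$ they are $8$ and $4$, with $\operatorname{ord}(8)=10=\pi(11)$ but $\operatorname{ord}(4)=5$. If your choice of $\sqrt5$ gives $\phi\equiv4$ (that is, $\sqrt5\equiv7$), then $\phi^0=\phi^5$. Your proposed check that $-1$ is a power of $\phi$ then cannot succeed, since an element of odd order has no power equal to $-1$. In general, either $\operatorname{ord}\phi=\operatorname{ord}\psi$, or one of them is odd and the other is twice it.

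The repair is cheap, because your count is already organized by parity: you only need injectivity within a parity class. Suppose $n\equiv n'\pmod 2$ and $\phi^n=\phi^{n'}$. Then $\psi^n=(-1)^n\phi^{-n}=\psi^{n'}$. Since $Q$ is diagonalizable with the distinct eigenvalues $\phi,\psi$ (this uses $p$ odd and $p\ne5$), it follows that $Q^n=Q^{n'}$, hence $n\equiv n'\pmod{\pi(p)}$. So each of the at most two roots attached to a given parity accounts for at most one index of that parity in any window of $\pi(p)$ consecutive indices. This gives at most $2+2=4$ occurrences, for Fibonacci and Lucas residues alike. Equivalently, you may label as $\phi$ the root of larger order, which then equals $\lcm(\operatorname{ord}\phi,\operatorname{ord}\psi)=\pi(p)$. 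The Fibonacci case $p=5$ is read off the displayed period of length $20$, in which every residue occurs exactly four times.
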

Formula~\eqref{eq:080} is the Chinese remainder theorem applied to the two
coordinates of the recurrence state; the prime formulas are the cited
results. At $p=5$, one period of the Fibonacci sequence is
$0,1,1,2,3,0,3,3,1,4,0,4,4,3,2,0,2,2,4,1$.

\subsection{Euler products and elementary constants}
We use the explicit estimates of Rosser--Schoenfeld~\cite{RS}:
\[
\begin{aligned}
 x/\log x&<\pi_{\rm prime}(x) &&(x\ge17),\\
 \pi_{\rm prime}(x)&<1.25506x/\log x &&(x>1),\\
 \prod_{p\le x}(1-p^{-1})^{-1}
 &<e^\gamma\log x\left(1+\frac1{2\log^2x}\right)&&(x\ge286).
\end{aligned}
\]
Here $\pi_{\rm prime}$ counts primes and $\gamma$ is Euler's constant.
These are the only explicit prime-counting estimates used in the tail below.
They give
\[
 \pi_{\rm prime}(2P)-\pi_{\rm prime}(P)
 <1.51012P/\log P\qquad(P\ge10^8).
\]
The elementary scalar checks are included here so that they need not be
treated as unexplained software outputs.

Let $C^{(10000)}$ be the product defining $C_2$ restricted to $p\le10000$.
The finite product is checked by exact rational multiplication. Its omitted
factors have parameters with sum at most
$\sum_{k\ge5000}(4k^2)^{-1}\le1/19996$.
Since $\prod(1-x_i)\ge1-\sum x_i$ for $0\le x_i\le1$, this gives
\[
 0.66013<C^{(10000)}(1-1/19996)\le C_2<C^{(10000)}<0.66017.
\]
Only the finite product in this display is part of the computational checks.

The positive series for the logarithm, used in the proof of
Lemma~\ref{lem:fourdat}, gives by four terms for $\log2$ and two terms for
$\log(5/4)$
\[
 \log10=3\log2+\log(5/4)>
 \frac{176248}{76545}>2.30254,
 \qquad \log2<\frac{1910051}{2755620}<0.693148.
\]
For $H_n=\sum_{j=1}^n1/j$, termwise integration yields
\[
 H_n-\log n-\gamma
 =\sum_{k=n}^\infty\left(\log(1+1/k)-\frac1{k+1}\right)
 >\frac1{2(n+1)}.
\]
Taking $n=10$ gives
\[
 \gamma<\frac{3913409}{6735960}<0.581.
\]
For $x=0.581$, the exponential series and its geometric remainder give
\[
 e^x\le\sum_{j=0}^4\frac{x^j}{j!}
       +\frac{x^5}{120(1-x/6)}
 =\frac{387529231401098967}{216760000000000000}<1.788<1.79.
\]
In particular, $\log(10^8)>18.4$ and
$\log2/\log(10^8)<0.03763$.

Put $W_*(x)=\prod_{3\le p<x}(1+1/(p-2))$. The Euler-product identity
\[
 W_*(x)=\frac12\prod_{p<x}(1-p^{-1})^{-1}
               \prod_{3\le p<x}\left(1+\frac1{p(p-2)}\right)
\]
and the preceding estimates imply
\begin{equation}\label{eq:Wstar}
 W_*(x)<1.36\log x\qquad(x\ge10^8).
\end{equation}
For example, the required scalar comparison
$1.79(1+18.4^{-2})/(2\cdot0.66013)<1.36$ is the positive integer difference
\[
 34(66013)(8464)-25(89500)(8489)=2819588>0.
\]
The same argument, splitting the prime factors of a positive integer $H$
at $y=\log(3H)$, gives
\[
 \sum_{\substack{d\mid H\\d\text{ odd squarefree}}}\omega(d)
 \ll(\log\log(3H))^2.
\]
Indeed, the primes at most $y$ give $O(\log y)$; there are at most
$\log H/\log y$ larger prime factors, whose product of correction factors
is $\exp(O(1/\log y))$. The square on the displayed logarithm is a harmless
weakening. The case $H=1$ is immediate. This is the endpoint bound used in
Lemma~\ref{lem:exc} and Appendix~\ref{app:lucas}.

Finally, the four-term bound already proved gives
$\log\phi<0.481212<0.481213$ and
\[
 \frac{\log\phi}{\theta}
 <\frac{120303}{249875}<0.481453.
\]
The square-root comparisons in the Lucas tail reduce to
\[
\begin{aligned}
 (0.8525)^2-(1.51012)(0.481213)
 &=\frac{1671861}{25000000000}>0,\\
 (1.414214)^2-2&=\frac{309449}{250000000000}>0.
\end{aligned}
\]
Also $10^8/(10^8-2)<1.00000003$ and $2+\sqrt2<3.414214<3.42$.
The following multiplication, with every replacement upwards, proves the
final tail constant:
\[
\begin{aligned}
 &(8.16)(0.8525)(1.03763)(1.00000003)(0.0001)(3.414214)\\
 &\quad<\tfrac12(14.442)(3.42)(0.0001)
 =0.002469582<0.00247.
\end{aligned}
\]
To check the intermediate factor, use
\[
 (16.32)(0.8525)=13.9128,\qquad
 13.9128(1.038)(1.00000003)<14.442.
\]
No numerical logarithm or square-root routine is needed in these comparisons.

\section{The singular-series mean of odd Lucas differences}\label{app:lucas}
We prove the second estimate in~\eqref{eq:coll}. This supplies the constant $E_D$ used for the first two compositions.

\subsection{A reciprocal-period estimate}
Let $Q=\left(\begin{smallmatrix}1&1\\1&0\end{smallmatrix}\right)$, and suppose that $p>2$ and $\pi(p)=m$.
Then $m$ is even, since $\det Q=-1$ modulo $p$. All entries of $Q^m-I$ are divisible by $p$, whence
\[
 p^2\mid\det(Q^m-I)=2-L_m.
\]
For positive even $m$, one has $0<L_m-2<\phi^m$.
Consequently, for $P\ge10^8$,
\[
 \#\{P<p\le2P:\pi(p)=m\}\le\frac{m\log\phi}{2\log P}.
\]
Only even $m$ occur, so
\[
 \sum_{\substack{P<p\le2P\\\pi(p)\le Z}}\frac1{\pi(p)}
 \le\frac{Z\log\phi}{4\log P}.
\]
The explicit prime-counting inequalities in Appendix~\ref{app:fib} bound the remaining contribution by $1.51012P/(Z\log P)$.
Taking $Z=2\sqrt{1.51012P/\log\phi}$ gives
\begin{equation}\label{eq:Lperiod}
 \sum_{P<p\le2P}\frac1{\pi(p)}
 \le\frac{\sqrt{1.51012P\log\phi}}{\log P}.
\end{equation}
Both the evenness of the period and the square divisibility have been used.

\subsection{Periodic probabilities and the limiting average}
For odd squarefree $r$, let $T_r=\lcm(\pi(r),3)=\pi(2r)$. In a block of length $T_r$, let $n_v$ count the indices $j$ with $3\nmid j$ and $L_j\equiv v\pmod r$. Define
\[
 \delta_D(r)=\frac{\sum_{v\bmod r}n_v^2}{(2T_r/3)^2}.
\]
For fixed $r$, the normalized collision count tends to $\delta_D(r)$. Deleting the diagonal changes it by $O(J^{-1})$.

For odd $p\ne5$, a prescribed Lucas value occurs at most four times in a complete Fibonacci state period, as shown in the proof of Lemma~\ref{lem:exc}. In any interval of $J$ indices, its total number of occurrences is at most $4J/\pi(p)+4$. Restricting to $3\nmid j$ can only decrease this count. Division by the $2J/3$ selected indices gives the finite-window bound
\[
 \delta_{D,J}(p)\le\frac6{\pi(p)}+\frac6J.
\]
For $p\mid r$, a collision modulo $r$ implies one modulo $p$. We use this estimate only for tail primes larger than a fixed cutoff, so the prime $5$ causes no problem.

Let $H_J$ be the product of the nonzero differences $|L_i-L_j|$, with $i,j$ among the selected indices. Then $\log H_J=O(J^3)$ and
\[
 \sum_{\substack{r\mid H_J\\r\text{ odd squarefree}}}\omega(r)\ll(\log J)^2,
\]
by the Euler-product argument in Appendix~\ref{app:fib}. Thus all finite-window endpoint errors together contribute $O((\log J)^2/J)=o(1)$.
Group the remaining terms by their largest prime factor. Equation~\eqref{eq:Lperiod} and the bound $W_*(x)<1.36\log x$ give a summable tail independent of $J$.
For each fixed prime cutoff, only finitely many squarefree moduli remain, so their limits can be taken term by term. Letting the cutoff grow proves the upper bound by the complete series. Conversely, retaining any finite collection of positive terms gives the corresponding lower limit. Hence the limit exists and
\begin{equation*}
 \lim_J\mathfrak A_{d,J}
 =2C_2\sum_{\substack{r\ge1\\r\text{ odd squarefree}}}\omega(r)\delta_D(r).
\end{equation*}
No uncontrolled exchange of an infinite sum and a limit is used.

\subsection{Finite terms and all omitted moduli}
Take $B_0=10^5$. The complete periodic histograms give every term with $r\le B_0$ exactly.
If $r>B_0$ and its largest prime factor $p$ is at most $B_0$, then $p\ge7$ and there is a unique factorization
\[
 r=gpe,\qquad g\in\{1,3,5,15\},\quad(e,15)=1,\quad P^+(e)<p.
\]
Here $P^+(1)=1$. Retaining the joint congruence condition gives $\delta_D(r)\le\delta_D(gp)$; the latter is not replaced by a product of independent probabilities.
Define
\[
 W_{15}(p)=\prod_{7\le\ell<p}\left(1+\frac1{\ell-2}\right),\qquad
 W_{15,p}(Y)=\sum_{\substack{e\le Y,\ e\text{ odd squarefree}\\(e,15)=1,\ P^+(e)<p}}\omega(e),
\]
where the product is over primes. The total of these omitted terms is at most
\[
 \sum_{7\le p\le B_0}\sum_{g\mid15}
 \frac{\omega(g)\delta_D(gp)}{p-2}
 \left(W_{15}(p)-W_{15,p}\!\left(\frac{B_0}{gp}\right)\right).
\]
This finite expression bounds all moduli in that class, not just a larger sample of moduli.
For $B_0<p\le10^8$, use $6W_*(p)/((p-2)\pi(p))$ and the exactly computed prime periods.

Finally, applying~\eqref{eq:Lperiod} on $P=2^k10^8$ gives
\[
 \sum_{p>10^8}\frac{6W_*(p)}{(p-2)\pi(p)}<0.00247.
\]
An explicit upper bound is
\[
 \frac{8.16\sqrt{1.51012\log\phi}\,(1+\log2/\log X_0)}
 {(1-2/X_0)(1-1/\sqrt2)\sqrt{X_0}},\qquad X_0=10^8.
\]
The positive-term rational series used in Appendix~\ref{app:fib} verify that this is less than $0.00247$.

The four parts, before multiplication by $2C_2$, have the following rational upper bounds. The displayed finite decimals are rounded upwards.
\begin{center}\small
\begin{tabular}{p{.69\linewidth}r}
\toprule
Part & Upper bound\\\midrule
Complete positive sum for $r\le10^5$ & 1.691933204677\\
Omitted $r>10^5$ with $P^+(r)\le10^5$ & 0.012881438111\\
Prime-period sum for $10^5<p\le10^8$ & 0.000371798387\\
Analytic tail for $p>10^8$ & 0.002470000000\\\bottomrule
\end{tabular}
\end{center}
The exact directed calculations use scale $10^{24}$. Using the saved integer upper bounds before decimal display, and $2C_2<1.32034$, gives
\[
 \lim_J\mathfrak A_{d,J}<2.254687105540<2.255.
\]
Even the sum of the displayed upward-rounded entries, multiplied by $1.32034$, is strictly below $2.255$. This proves the required estimate without relying on the last displayed digits.

The regeneration code computes the prime periods by fast doubling. It checks that the candidate power of the recurrence matrix is the identity and that division by any prime factor destroys this property. For all primes at most $10^5$, full recurrence iteration gives an independent cross-check. The finite computations do not replace the analytic tail proof above.

\section{Finite certificates and reproducibility}\label{app:cert}
A certificate here means a fixed list of integers and rational identities whose exact verification proves a stated quantitative bound. The supplement contains only these lists and the source code needed
for the new finite inequalities. All $34$ moment vectors are on the $377$
representatives of $\mathcal S_+$; the original full-box vectors are unnecessary.
The Lucas output tables can be regenerated from the supplied source files.

\subsection{Parameters for the common large-deviation estimate}
The following are the exact rational parameters in~\eqref{eq:cert}.
Coordinates are ordered first by $m=0$, $n=0,\ldots,18$, then by $m=1,\ldots,18$ with $n$ increasing along the corresponding row of $\mathcal S_0$.
\begingroup\small
\setlength{\tabcolsep}{4pt}
\begin{longtable}{rrrrrrr}
\caption{Parameters for the $32$ common matrix certificates.}\\
\toprule
$i$ & $t_a$ & $t_b$ & $t_c$ & $t_d$ & $\rho_i$ & $H_i$\\\midrule
\endfirsthead
\toprule
$i$ & $t_a$ & $t_b$ & $t_c$ & $t_d$ & $\rho_i$ & $H_i$\\\midrule
\endhead
0 & 0.8 & 0 & 0 & 0 & 1.1689052 & 0.3328785 \\
1 & 0 & 0.8 & 0 & 0 & 1.502687 & 0.4166077 \\
2 & 0 & 0 & 0.8 & 0 & 1.1689052 & 0.3328785 \\
3 & 0 & 0 & 0 & 0.8 & 1.502687 & 0.4166077 \\
4 & 0.16 & 0.16 & 0.4 & 0.4 & 1.3933263 & 0.3914207 \\
5 & 0 & 0.28 & 0.56 & 0.3 & 1.4408379 & 0.4025977 \\
6 & 0.56 & 0 & 0.58 & 0.3 & 1.4656941 & 0.4082991 \\
7 & 0 & 0.28 & 0 & 0.58 & 1.4455748 & 0.4036918 \\
8 & 0.55 & 0.28 & 0 & 0.31 & 1.4999362 & 0.415997 \\
9 & 0.6 & 0.6 & 0.3 & 0.3 & 3.5442808 & 0.7026349 \\
10 & 0.3 & 0.3 & 0.6 & 0.6 & 3.5233196 & 0.7006577 \\
11 & 0.4 & 0.4 & 0.4 & 0.4 & 2.634295 & 0.6037282 \\
12 & 0.139918 & 0.201013 & 0.0841 & 0.547221 & 1.4414929 & 0.4027492 \\
13 & 0.098433 & 0.22296 & 0.130803 & 0.52019 & 1.4339461 & 0.4009995 \\
14 & 0.277748 & 0.137175 & 0.302109 & 0.426796 & 1.4224535 & 0.3983172 \\
15 & 0.44882 & 0.053936 & 0.420928 & 0.36973 & 1.4351432 & 0.4012776 \\
16 & 0.421811 & 0.067831 & 0.445834 & 0.35548 & 1.4312239 & 0.4003661 \\
17 & 0.49689 & 0.029329 & 0.518389 & 0.324755 & 1.4441291 & 0.4033582 \\
18 & 0.370703 & 0.084563 & 0.066622 & 0.561945 & 1.4588146 & 0.4067308 \\
19 & 0.394138 & 0.085278 & 0.134873 & 0.510656 & 1.4457649 & 0.4037356 \\
20 & 0.281291 & 0.139381 & 0.285636 & 0.431017 & 1.4228998 & 0.3984217 \\
21 & 0.149539 & 0.19701 & 0.456302 & 0.35356 & 1.4204091 & 0.3978377 \\
22 & 0.123349 & 0.214102 & 0.43616 & 0.359368 & 1.420072 & 0.3977586 \\
23 & 0.039892 & 0.262831 & 0.505498 & 0.319976 & 1.4285595 & 0.399745 \\
24 & 0.171451 & 0.195385 & 0.39914 & 0.371603 & 1.4178224 & 0.3972301 \\
25 & 0.170002 & 0.193456 & 0.399903 & 0.374059 & 1.4180479 & 0.3972832 \\
26 & 0.170197 & 0.19438 & 0.403287 & 0.371292 & 1.4180138 & 0.3972751 \\
27 & 0.167042 & 0.196212 & 0.398668 & 0.373321 & 1.4179647 & 0.3972636 \\
28 & 0.166854 & 0.196286 & 0.401417 & 0.37198 & 1.4180046 & 0.397273 \\
29 & 0.168136 & 0.194783 & 0.401203 & 0.373 & 1.4180519 & 0.3972841 \\
30 & 0.169337 & 0.195089 & 0.40076 & 0.372248 & 1.4179629 & 0.3972632 \\
31 & 0.168339 & 0.195147 & 0.402023 & 0.372096 & 1.4180246 & 0.3972777 \\
\bottomrule
\end{longtable}
\endgroup
The parameters and all reduced vectors are in \path{vectors.json}. The two
files \path{partition_even.json} and \path{partition_odd.json} give the complete
amplitude partitions. The local rows and their six-block sums are in
\path{major_rows.csv} and \path{major_summary.json}. These paths are relative to
\path{anc/proof_data/}.

\subsection{Rational enclosures and acceptance tests}
For the parameters used here, the positive-term series give the enclosures
\begin{align*}
 E^+(t)&=\sum_{j=0}^{13}\frac{t^j}{j!}
 +\frac{t^{14}}{14!}\frac1{1-t/15},\\
 I_k^-(t)&=\sum_{j=0}^{5}\frac{(t/2)^{2j+k}}{j!(j+k)!},\\
 I_k^+(t)&=I_k^-(t)+\frac{(t/2)^{12+k}}{6!(6+k)!}
 \frac1{1-t^2/(4\cdot7\cdot(7+k))}\qquad(1\le k\le3).
\end{align*}
The tails are bounded by the first omitted term times a geometric series.
Hence $e^t\le E^+(t)$ and $I_k^-(t)\le I_k(t)\le I_k^+(t)$.
For the constant Fourier coefficient use
$c_0(t)\le E^+(t)-2\sum_{k=1}^3I_k^-(t)$.
At $t=0$, $c_0=1$ and all other coefficients are zero exactly.
Every coefficient is rounded upwards at scale $10^{12}$.
The six denominators in a period are then cleared, so each comparison with a positive vector is an integer inequality. Fourteen exponential terms and six Bessel terms suffice for every parameter in the table, whose entries are at most $0.8$.

For logarithms, put $z=(x-1)/(x+1)$ and use
\[
 \log x=2\sum_{j\ge0}\frac{z^{2j+1}}{2j+1},\qquad x>1,
\]
with a positive geometric upper bound for the tail. The rational comparisons
$\phi<1.618034$, $\pi<22/7$ and
$\cos(\pi/2000)\ge1-\tfrac12(22/14000)^2$
provide lower bounds for the rates in~\eqref{eq:rate}.
The checker verifies vector positivity, every matrix coordinate, the rate inequalities, complete box coverage, nonnegative polynomial coefficients and the coefficient-sum remainder bound. It then recomputes all required cyclic convolutions and checks the strict main-minus-minor margins.

The supplementary module \path{finite.py} also checks the fixed
power-loss budget in the fourth-moment proof and recomputes the two rates for
$u=4$ using only the rational bounds $\pi<22/7$ and $\phi<1.618034$.
These checks concern the stated scalar inequalities, not the analytic
validity of the fourth-moment estimate.

The positive vectors may be found by numerical optimization. Their acceptance does not use a floating-point eigenvalue or an optimality assertion. Likewise, the amplitude certificates prove inequalities on complete boxes, not merely at sampled points.

\subsection{What is checked, and what is proved on paper}\label{sec:trust}
The finite checks establish lower bounds for the major-arc constants and
upper bounds for the recurrence-difference means and matrix growth rates.
The latter rates determine the large-deviation exponents. These bounds are
needed in specific ranges: a weaker lower bound, a larger upper bound, or
an insufficient decay exponent could prevent the final comparison from being
positive. Thus the calculations are necessary quantitative steps, not tests
of whether finitely many integers have representations.

Sections~\ref{sec:mainpf} and~\ref{sec:proofs} prove the implication from the
verified inequalities to the representation theorem. Every accepted finite
comparison is exact, and all discarded infinite terms have explicit analytic
bounds. Computer assistance therefore changes how these finite steps are
checked, not their status as rigorous inequalities. The checking tasks are:
\begin{enumerate}
\item For each of $34$ fixed parameter rows, the checker constructs the
$377$-state transitions from~\eqref{eq:redact}, forms rational upper bounds
for the Bessel coefficients, and verifies every strict integer upper-vector
inequality. It also checks the stated large-deviation rate.
\item For each parity it reconstructs the complete bisection of $[0,1]^4$.
It checks all polynomial coefficients and the $\ell^1$ bound for the residual
in~\eqref{eq:poly}. Thus a continuum is covered by proved inequalities, not
by sampling finitely many amplitudes.
\item It generates every odd squarefree modulus $r\le300$, iterates the
recurrence until its initial state returns, and recomputes the histograms and
all needed cyclic convolutions. Parity and total mass are checked at each
stage. It verifies the downward-rounded rows~\eqref{eq:majorint} and their
sums, rather than trusting a displayed decimal.
\item In full mode it regenerates the odd-Lucas histograms, the joint
small-prime histograms, and the prime periods through $10^8$. It checks
minimal periods, the completeness of the moduli, all outward rounding, and
the bound $2.255$. The finite Euler product for $C_2$ is checked separately.
\end{enumerate}
All accept/reject decisions use integers or rational numbers. Floating-point
optimization is not invoked by the checker, and a floating-point eigenvalue
is never accepted as a matrix bound. The $34$ supplied vectors have $377$
coordinates each, all between $1$ and $10^{10}$. Each is an explicit positive vector proving the matrix bound: no reproduction of their original search is necessary.

The direction of the matrix action can be inspected without reconstructing a
large matrix. For source $(m,n)$ and destination $(m',n')$, the inverse of a
step with digits $r,s$ is
\[
 (m,n)=(n'+s,\ m'-n'-r-3s).
\]
The checker implements both this pullback and the source-to-destination rule
and compares their actions. Quotient multiplicities are justified by
Lemma~\ref{lem:reduce}, not guessed from a symmetry factor.
The recurrence-return test likewise has an elementary termination bound:
modulo $q$ the state update is invertible on the $q^2$ pairs, so a state
returns within $q^2$ steps. Fixed guards in the larger routines cause failure
if exceeded; they are not silently assumed period theorems.

The C++ enumeration uses fixed-width integers only within explicit bounds.
In the histogram routines the guards give a mass at most $9.6\cdot10^6$;
a sum of histogram squares is at most the square of that mass. These quantities
are below $2^{63}$. In the prime-period routine, the checked Euler-product
bound is at most $100\cdot10^{24}$. With $p\le10^8$, its largest product
before division is below $10^{34}<2^{128}$. The rational aggregation is
independently reconstructed with Python integers of arbitrary precision.
Small periods are cross-checked by direct iteration, separately from the
rank/fast-doubling calculation; selected collision probabilities are also
recomputed from the Lucas recurrence rather than the Fibonacci state formula.

File lengths and SHA256 hashes specify the supplied files and the expected
regenerated outputs. Hashes are integrity checks, not mathematical evidence
for a bound. Full verification recomputes the arithmetic as well as comparing
files. The checker fails on missing or malformed required data, and the
reproduction tests include rejection of a nonpositive vector entry and of a
missing partition box. No finite search for representations of integers
is used anywhere.

The remainder of the proof is not certified by these programs. The state
restriction, symmetry, parity compression, short scalar comparisons,
exceptional-conductor argument, analytic prime estimates, and infinite Lucas
tail are proved in the text. The odd-Fibonacci mean is explicitly cited from
Sun; removing its duplicate enumeration does not turn that cited input into
a new computer-free proof. These distinctions make the finite obligations
reproducible without claiming a formal verification of the whole paper.

\subsection{Compact supplementary files and verification}
The accompanying \path{anc/proof_data/} directory contains one reduced-vector
file, two amplitude-partition files, the short major-arc integer table, and
verification/generation source code. The four large Lucas output files are
not included: their complete generation requires no external data. Obsolete
certificates for other statements, historical reports, duplicate archives,
full-box vectors, and the repeated Fibonacci collision enumeration are omitted.
The parameters needed to reproduce every new finite input remain fixed in
the supplied files and in the paper.

From that directory run
\begin{Verbatim}[fontsize=\small]
python verify.py
\end{Verbatim}
This verifies the reduced vectors, the complete amplitude partitions, all
local convolutions and the scalar comparisons. It explicitly reports that
the large Lucas enumeration was not performed. To check all new finite inputs,
run
\begin{Verbatim}[fontsize=\small]
python verify.py --full --report /path/outside/proof_data/report.json
\end{Verbatim}
Python 3.10 or later and its standard library suffice for the local mode;
NumPy is not required. Full mode additionally requires a C++17 compiler with
unsigned $128$-bit integer support, selected by \texttt{CXX} or \texttt{g++}.
All generated tables are placed in a temporary directory. The supplied inputs
are read-only, checked again at completion, and never overwritten with a new
runtime log. Verification conditions remain active with Python optimization
as well. A successful full run establishes the new finite inequalities just
listed, not the analytic theorems or the cited Fibonacci mean.

\Needspace{11\baselineskip}
\section*{Acknowledgements}
 This work was motivated by Sun's paper \cite{Sun}. The authors
described a circle-method approach to the case of two primes, three
Fibonacci numbers and two Lucas numbers. ChatGPT (OpenAI) filled in the 
details of the argument along this approach. The cases with one
Fibonacci number and four Lucas numbers, and with two Fibonacci
numbers and three Lucas numbers, were developed with similar AI
assistance. Rethlas supplied the argument for the case with four
Fibonacci numbers and one Lucas number. ChatGPT also assisted with
the writing and revision of the manuscript.

The authors have checked the analytic arguments and the mathematical
basis of the verification algorithms for all four cases. The extensive
finite computations are carried out by the accompanying programs
using exact integer and rational arithmetic, with explicit bounds
for truncation and rounding errors. These computations have not been
repeated in full by hand. The authors take responsibility for the
mathematical content and conclusions of the paper.

\end{document}